\documentclass[10pt]{article}

\usepackage{pifont}
\usepackage{amsmath}
\usepackage{cases}
\usepackage{amsfonts}
\usepackage{latexsym}
\usepackage{amssymb}
\usepackage{stmaryrd}
\usepackage{overpic}
\usepackage{color}

\usepackage{abstract}
\usepackage{indentfirst}
\usepackage{booktabs}
\usepackage{url}
\usepackage{verbatim}
\usepackage{diagbox}
\usepackage{multirow} 
\usepackage{graphicx}  
\usepackage{float}  
\usepackage{subfigure} 
\usepackage{subfig}
\usepackage{bm}
\usepackage[numbers,sort&compress]{natbib}
\usepackage{epstopdf}
\usepackage[ruled,linesnumbered]{algorithm2e}

\SetKwInOut{Input}{input}
\SetKwInOut{Output}{output}
\DontPrintSemicolon
\SetAlgoNlRelativeSize{-1}
\SetAlCapSkip{0.3em}

\makeatletter

\newcommand{\Rmnum}[1]{\expandafter\@slowromancap\romannumeral #1@}
\makeatother

\DeclareFontEncoding{FMS}{}{}
\DeclareFontSubstitution{FMS}{futm}{m}{n}
\DeclareFontEncoding{FMX}{}{}
\DeclareFontSubstitution{FMX}{futm}{m}{n}
\DeclareSymbolFont{fouriersymbols}{FMS}{futm}{m}{n}
\DeclareSymbolFont{fourierlargesymbols}{FMX}{futm}{m}{n}
\DeclareMathDelimiter{\VERT}{\mathord}{fouriersymbols}{152}{fourierlargesymbols}{147}

\newtheorem{theorem}{Theorem}[section]
\newtheorem{assumption}{Assumption}
\newtheorem{corollary}[theorem]{Corollary}

\newtheorem{example}[theorem]{Example}
\newtheorem{lemma}[theorem]{Lemma}

\newtheorem{proposition}[theorem]{Proposition}
\newtheorem{remark}{Remark}
\newenvironment{proof}[1][Proof]{\textbf{#1.} }
{\ \rule{0.75em}{0.75em}\smallskip}

\begin{document}

\begin{center}
\large\bf Structure-Oriented Randomized Neural Networks for Poisson--Nernst--Planck and Poisson--Nernst--Planck--Navier--Stokes Systems
\end{center}

\vspace{2mm}

\begin{center}
{\large\sc Yunlong Li}\footnote{School of Mathematics and Statistics, Xi'an Jiaotong University, Xi'an, Shaanxi 710049, P.R. China. E-mail: {\tt 4122107033@stu.xjtu.edu.cn}},\quad
{\large\sc Fei Wang}\footnote{School of Mathematics and Statistics \& State Key Laboratory of Multiphase Flow in Power Engineering, Xi'an Jiaotong University, Xi'an, Shaanxi 710049, China. The work of this author was partially supported by the Major Research Plan of
			the National Natural Science Foundation of China (Grant No.\ 92470115). Email: {\tt feiwang.xjtu@xjtu.edu.cn}} \quad

\end{center}

\vspace{2mm}

\begin{quote} 
\noindent{}{\bf Abstract}: 
We develop a structure-oriented randomized neural network framework, termed SO-RaNN, for the Poisson-Nernst-Planck (PNP) system and the Poisson-Nernst-Planck-Navier-Stokes (PNP-NS) system. The decoupled linearized subproblems are solved iteratively by randomized neural networks in a space-time framework. For the concentration variables, a pointwise cut-off is used to enforce positivity at the value level, and discrete mass-scaling factors are computed at selected correction instants and interpolated in time, so as to ensure exact mass matching at those instants and to promote approximate mass preservation between them. To introduce an auxiliary discrete dissipation mechanism, we further employ an SAV-type post-processing correction, which yields monotonicity of the SAV auxiliary variable under the ideal SAV update. For the PNP-NS system, a structure-preserving randomized neural network (SP-RaNN) is used for the velocity field, so that the velocity approximation satisfies the incompressibility constraint pointwise by construction. On the theoretical side, we derive residual-based estimates for the raw, uncorrected RaNN solvers of the linearized subproblems, formulate a conditional local-in-time convergence result for the raw outer Picard iteration of the PNP system, and analyze the value-level positivity correction together with the mass-correction and SAV post-processing steps. For the PNP-NS system, we establish an approximation result for the SP-RaNN space and provide a conditional error statement for the corresponding linearized Oseen-type problem. Numerical experiments demonstrate approximation accuracy in the source-driven manufactured tests and illustrate the intended value-level positivity correction, selected-time mass matching, computed free-energy curves based on the final gauge-fixed potential, and divergence-free approximation in benchmark tests.

{\bf Keywords:} structure-oriented randomized neural network, Poisson-Nernst-Planck, Poisson-Nernst-Planck-Navier-Stokes, space-time

{\bf Mathematics Subject Classification.} 65M06, 68T07, 41A46
\end{quote}

\vspace{2mm}

\section{Introduction}

The Poisson--Nernst--Planck (PNP) system is a coupled continuum model for ionic transport under an electric field(\cite{Nernst1889,Planck1890}). It consists of the Poisson equation and Nernst-Planck equations, where the Poisson equation determines the electrostatic potential generated by the distribution of charges, and Nernst-Planck equations describe the flux and evolution of each ionic species. The PNP system has been used in a wide range of applications, including semiconductors (\cite{Gajewski1986,Markowich1990,Jerome1996}), electrochemistry (\cite{Bazant2004}), and biology (\cite{Biler1994,Eisenberg1998}). When fluid motion is present, the PNP equations are coupled with the Navier--Stokes equations through the Coulomb force, leading to the Poisson-Nernst-Planck-Navier-Stokes (PNP-NS) system. This model is used to describe electrokinetic flows and has applications in biological ion channels and electrochemical systems (\cite{Choi2006,Schmuck2009}).

For sufficiently smooth positive solutions under suitable boundary conditions, the PNP system satisfies positivity of ionic concentrations, mass conservation, and free-energy dissipation. In the PNP-NS system, the incompressibility constraint for the velocity field is an additional structure. Numerically preserving these structures is important for stability and physical fidelity. Some structure-preserving methods have already been proposed for the PNP system (\cite{Liu2014,He2019,Hu2020,Ding2020,Shen2021,Huang2021,Fu2022,Dong2024,Bonilla2025}) and the PNP-NS system (\cite{Dehghan2023,Zhou2023,Pan2024,Yu2025}). For the PNP system, He et al. (\cite{He2019}) constructed an unconditionally stable semi-implicit linearized difference scheme based on the reformulation of the PNP system. Huang and Shen (\cite{Huang2021}) proposed an approach to construct structure-preserving schemes based on a suitable variable transformation and the scalar auxiliary variable (SAV, \cite{Shen2018}) method, which is flexible, easy to implement, and capable of achieving high-order accuracy with a computational complexity comparable to that of a semi-implicit scheme. Fu and Xu (\cite{Fu2022}) proposed a class of high-order accurate and structure-preserving finite element schemes based on the log-density formulation. Bonilla and Guti\'errez-Santacreu (\cite{Bonilla2025}) proposed two physics-based stabilized finite element methods. For the PNP-NS system, Zhou et al. (\cite{Zhou2023}) and Pan et al. (\cite{Pan2024}) developed efficient and structure-preserving schemes based on the SAV approach. Yu et al. (\cite{Yu2025}) proposed a decoupled and structure-preserving scheme, and carried out the error analysis for the fully discretized scheme.

In recent years, neural networks (NN) methods have attracted considerable attention due to their powerful approximation capability (\cite{Barron1993,Chen1995}). An increasing number of studies have focused on using neural network methods to solve PDEs (\cite{E2017,Sirignano2018,George2019,Lu2021,Jin2021,Sheng2021,Moseley2023}). These NN methods offer several attractive features such as mesh-free representation, high-dimensional capability and space-time formulation, but training NNs involves solving non-convex, nonlinear optimization problems, which may limit the accuracy and efficiency of NN-based approaches in practical PDE computations.

Randomized Neural Network (RaNN) methods (\cite{Pao1994,Igelnik1995,Huang2006}) fix the parameters in hidden layers and only train the parameters in the output layer, which results in a linear least-squares (LS) optimization. Therefore, RaNN methods balance the approximation error and the optimization error, and have demonstrated potential in solving PDEs 
(\cite{Dong2021,Dong2023,Wang2024,Chen2022,Xu2025,Shang2023,Shang2025,Sun2024,Sun2025,Dang2023,Dang2024,Li2025}). Dong et al. (\cite{Dong2021}) developed an efficient method that combines domain decomposition and local extreme learning machines (ELM, the single hidden-layer RaNN), enforcing continuity across shared boundaries. Chen et al. (\cite{Chen2022}) extended the local ELM method to the overlapping domain decomposition, which is called the random feature method. Li and Wang (\cite{Li2025}) developed local RaNN with finite difference methods for interface problems. Shang et al. (\cite{Shang2025}) integrated RaNNs with overlapping Schwarz domain decomposition, and constructed effective overlapping Schwarz preconditioners for solving the resulting linear systems. Dang and Wang (\cite{Dang2024}) proposed the adaptive-growth Randomized Neural Network framework, which significantly enhances the approximation capability of the RaNN, and established a unified error analysis. Some studies have combined RaNN methods with weak formulations, which is more effective for low regularity problems. Shang et al. (\cite{Shang2023}) integrated RaNNs with the Petrov-Galerkin method to solve both linear and nonlinear PDEs. Sun et al. (\cite{Sun2024}) proposed RaNN-DG method that couples local RaNNs by discontinuous Galerkin (DG) formulations. Dang and Wang (\cite{Dang2023}) combined local RaNNs with the hybridized discontinuous Petrov-Galerkin method for Stokes-Darcy problems. These studies indicate the potential of RaNNs for achieving accurate approximations with reduced training costs. A related and important issue is the choice of hidden-feature distributions. Since the hidden-layer parameters are fixed during the LS solve, the quality of the sampled feature space has a direct influence on the accuracy and robustness of RaNN-type solvers. In this direction, Zhang et al. (\cite{Zhang2024}) proposed transferable neural networks for PDEs by constructing neural feature spaces from a function-approximation viewpoint, where hidden neurons are re-parameterized and tuned to obtain transferable feature distributions. Yang and Wang (\cite{Yang2026}) proposed an adaptive-distribution RaNN framework, in which low-dimensional parameters of the hidden-feature sampling distribution are learned while the output-layer coefficients are still determined by a linear LS solve once the feature distribution is fixed.

Motivated by the difficulty of handling positivity, mass conservation, dissipation-related behavior, and, for the PNP-NS system, incompressibility within a RaNN framework, we propose a structure-oriented randomized neural network (SO-RaNN) method for the PNP and PNP-NS systems. The main contributions of this work are summarized as follows.

(i) We propose a space-time SO-RaNN framework for the PNP and PNP-NS systems, where the nonlinear coupled systems are treated by Picard-type linearization and reduced to a sequence of decoupled linear least-squares RaNN subproblems.

(ii) We develop a structure-oriented correction pipeline, including value-level positivity cut-off, selected-time mass correction, an SAV-type auxiliary update, an additional Nernst--Planck solve with the SAV-scaled potential, and a final gauge-fixed Poisson least-squares update.

(iii) For the PNP-NS system, we combine the above correction framework with an SP-RaNN velocity representation. 
As a result, the velocity approximation satisfies the incompressibility constraint pointwise by construction.

(iv) We provide a modular analysis that separates the raw RaNN approximation from the correction steps and the 
final Poisson least-squares update. The analysis includes residual-based estimates for the raw linearized subproblem solvers, a conditional local-in-time convergence result for the raw PNP Picard iteration, and a conditional error statement for the SP-RaNN approximation of the linearized Oseen-type subproblem.

The remainder of the paper is organized as follows. Section 2 reviews RaNNs and the SP-RaNN construction, and presents the PNP and PNP-NS models together with the proposed algorithms. Section 3 contains the theoretical analysis. For the PNP system, we study the raw RaNN solvers for the linearized subproblems, establish a conditional local convergence result for the raw outer Picard iteration, and analyze the correction steps. For the PNP-NS system, we discuss the divergence-free approximation and the corresponding conditional residual-based estimate for the linearized Oseen-type subproblem. Section 4 presents numerical experiments. Finally, Section 5 concludes the paper and discusses future directions.

\section{RaNN for the PNP and PNP-NS system}
In this section, we first review the construction of fully connected RaNNs and the divergence-free SP-RaNN space (\cite{Li2026}), then introduce the PNP and PNP-NS systems considered in this work and describe the proposed SO-RaNN algorithms for these models.

\subsection{Randomized Neural Networks}\label{RaNN}
We first consider a fully connected neural network $\Psi:\ \mathbb{R}^{n_0}\rightarrow \mathbb{R}^{n_D}$, which can be expressed recursively as follows:
\begin{align*}
    &\Psi_{0}(\mathbf{x}) = \mathbf{x},\\
    &\Psi_{k}(\mathbf{x}) = \rho(W_{k}\Psi_{k-1}+b_{k}), \quad k=1,...,D-1,\\
    &\Psi = \Psi_{D} = W_{D}\Psi_{D-1},
\end{align*}
where $D$ is the depth, $n_l,\ l=0,...,D$ is the width of each layer, $\rho$ is the activation function, $W_k \in \mathbb{R}^{n_k \times n_{k-1}},\ k=1,...,D$ and $b_k \in \mathbb{R}^{n_k}, k=1,...,D-1$ are the weights and biases in the $k$-th layer. For a standard neural network, the weights $W_k,\ k=1,...,D$ and the biases $b_k,\ k=1,...,D-1$ are trained by minimizing a loss function, which is usually nonlinear and nonconvex. Consequently, the training process could be computationally expensive and sensitive to the optimization strategy.

In order to reduce the difficulty of the training process, RaNN methods fix the hidden-layer parameters ($W_k,\ b_k,\ k=1,...,D-1$) after random initialization and only train the output-layer weights $W_D$. With the hidden features fixed, training reduces to a linear least-squares problem for the output coefficients. Since the parameters of the hidden layers are fixed, the neurons in the last hidden layer can be regarded as basis functions. For a scalar output, the RaNN approximation can therefore be written as a linear combination of the fixed hidden features:
\begin{equation}\label{RaNN-linear}
    u_\rho=\sum^{n_{D-1}}_{i=1}\alpha_i \psi_i,
\end{equation}
where $\psi_i$ is the output of neurons in the last hidden layer. The common fully-connected NN and RaNN are presented in Figure \ref{RaNN-p}.

\begin{figure}[!htbp] 		
	\centering
	\includegraphics[scale=0.25]{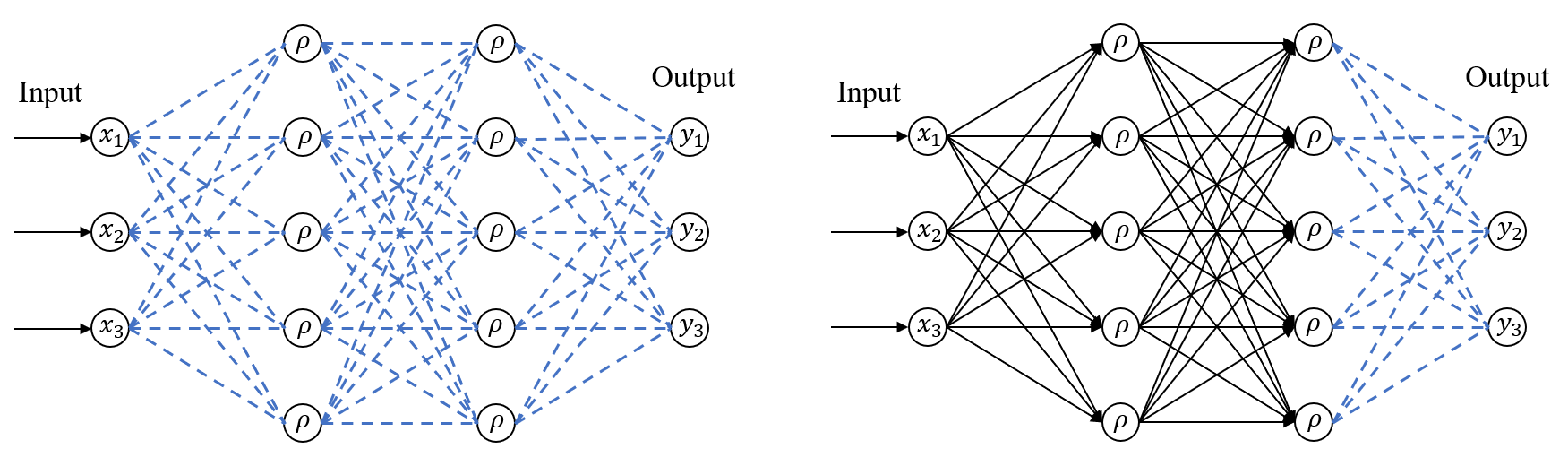}
	\caption{The structures of a two-hidden-layer NN (left) and a two-hidden-layer RaNN (right): the blue dash lines represent parameters that are learnable, and the black solid lines represent parameters that are randomly initialized and fixed.}
	\label{RaNN-p}
\end{figure}

Li and Wang (\cite{Li2026}) constructed the SP-RaNN space based on the equivalence characterization of divergence-free vector fields on simply connected domains. The resulting SP-RaNN $\mathbf{u}_{\rho}=\sum\limits_{j=1}^{m} \alpha_{j} \bm{\phi}^{j}$ satisfies the spatial divergence-free constraint pointwise by construction. For completeness, and to make the space-time implementation explicit, we recall the SP-RaNN basis functions used in this work. The hidden features may depend on time, whereas the divergence operator is always taken only with respect to the spatial variables.

For the three-dimensional time-dependent case, let
\begin{align*}
    &\bm{\phi}^{j}=(\phi_{1}^{j},\phi_{2}^{j},\phi_{3}^{j})^{\rm T},\\
    &\phi_{1}^j(\mathbf{x},t)=w^{j}_{32}\rho_3^j(h^{j}_{3}(\mathbf{x},t))-w^{j}_{23}\rho_2^j(h^{j}_{2}(\mathbf{x},t)),\\
    &\phi_{2}^j(\mathbf{x},t)=w^{j}_{13}\rho_1^j(h^{j}_{1}(\mathbf{x},t))-w^{j}_{31}\rho_3^j(h^{j}_{3}(\mathbf{x},t)),\\
    &\phi_{3}^j(\mathbf{x},t)=w^{j}_{21}\rho_2^j(h^{j}_{2}(\mathbf{x},t))-w^{j}_{12}\rho_1^j(h^{j}_{1}(\mathbf{x},t)),\\
    &h^{j}_i(\mathbf{x},t)=w^{j}_{i1}x+w^{j}_{i2}y+w^{j}_{i3}z+w_{it}^jt+b_i^j,\quad {\rm for}\ j=1,\cdots,m.
\end{align*}
Equivalently, the above construction can be interpreted as a curl-type construction. 
Indeed, if the vector-potential features are chosen as
\[
A^j=(\Theta_1^j(h_1^j),\Theta_2^j(h_2^j),\Theta_3^j(h_3^j)),
\qquad (\Theta_i^j)'=\rho_i^j,
\]
then taking the spatial curl of \(A^j\) gives the displayed basis functions. 
Therefore, \(\nabla\cdot\bm\phi^j=0\) pointwise.

For the two-dimensional time-dependent case, let
\begin{align*}
    &\bm{\phi}^{j}=(\phi_{1}^{j},\phi_{2}^{j})^{\rm T},\\
    &\phi_{1}^j(\mathbf{x},t)=w^{j}_{2}\rho^j(h^{j}(\mathbf{x},t)),\\
    &\phi_{2}^j(\mathbf{x},t)=-w^{j}_{1}\rho^j(h^{j}(\mathbf{x},t)),\\
    &h^{j}(\mathbf{x},t)=w^{j}_{1}x+w^{j}_{2}y+w_t^jt+b^j,\quad {\rm for}\ j=1,\cdots,m.
\end{align*}
Indeed,
\[
\nabla\cdot\bm{\phi}^j
=\partial_x\left(w_2^j\rho^j(h^j)\right)+\partial_y\left(-w_1^j\rho^j(h^j)\right)
=w_2^jw_1^j(\rho^j)'(h^j)-w_1^jw_2^j(\rho^j)'(h^j)=0.
\]

The simplest SP-RaNNs are shown in Figure \ref{SP-RaNN-p}. The weights between the input layer and the first hidden layer are randomly chosen and fixed, while the weights connecting the first hidden layer to the second hidden layer are determined accordingly. The biases in the second hidden layer are set to zero, and the activation functions are chosen as identity maps. The network structure shown in Figure \ref{SP-RaNN-p} can realize the previous derivation and thereby satisfy the spatial divergence-free condition $\nabla\cdot\mathbf u_\rho=0$ pointwise in space-time.

\begin{figure}[!htbp] 		
	\centering
	\includegraphics[scale=0.25]{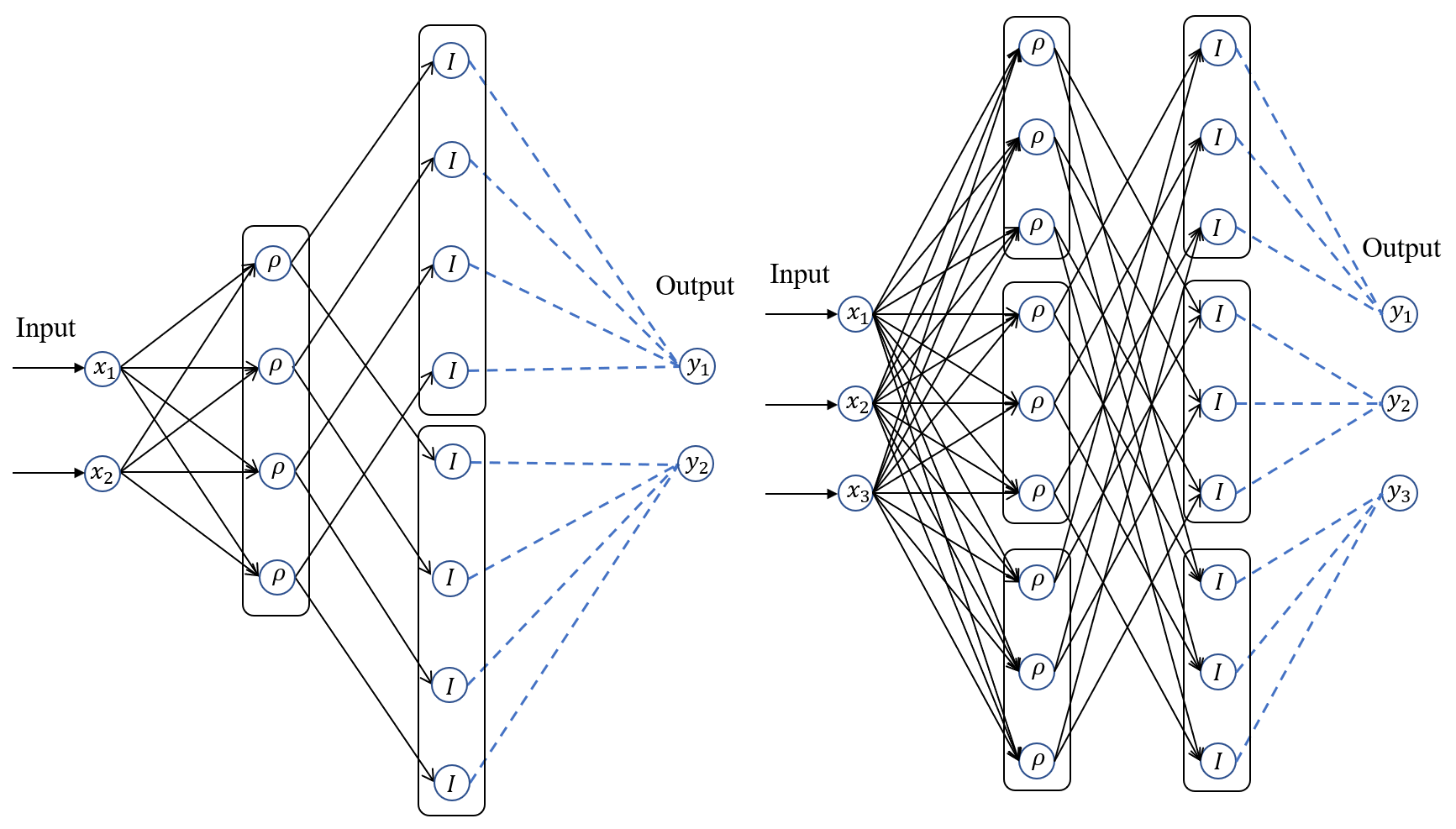}
	\caption{The structure of two-dimensional and three-dimensional SP-RaNNs: the blue dash lines represent tunable weights, while the black solid lines represent weights and biases that are fixed after initialization.}
	\label{SP-RaNN-p}
\end{figure}

\subsection{PNP Equations and PNP-NS Equations}\label{systems}

The PNP system \eqref{PNP-model} describes ionic transport under an electrostatic field through the coupling of the Nernst--Planck equations and the Poisson equation. In the Nernst--Planck equations, each ionic species evolves under diffusion driven by concentration gradients and electro-migration driven by the electric field. The Poisson equation determines the electrostatic potential generated by the charge distribution, thereby coupling the ionic concentrations with the electric field.

\begin{subequations}\label{PNP-model}
    \begin{align}
        \frac{\partial c_i}{\partial t} &= D_i \nabla \cdot(\nabla c_i + z_i c_i \nabla \phi),\ i=1,...,N,\ {\rm in}\ \Omega\times I,\\
        -\epsilon_p^2 \Delta \phi &= \sum\limits_{i=1}^{N}z_i c_i,\ {\rm in}\ \Omega\times I,
    \end{align}
\end{subequations}
where $\Omega$ is the spatial domain, $I$ is the time interval, $c_i(\mathbf{x},t)$ is the concentration of the $i$-th species, $\phi(\mathbf{x},t)$ is the electrostatic potential, $D_i>0$ is the diffusion coefficients, $z_i\in\mathbb{R}$ is the valence, and $\epsilon_p >0$ is the permittivity parameter. The initial condition is given by 
$$c_i(\mathbf{x},0)=c_{i,0}(\mathbf{x}),\ i=1,...,N.$$
On the boundary, we impose the no-flux condition:
\begin{equation}\label{no-flux-boundary}
    D_i (\nabla c_i + z_i c_i \nabla \phi)\cdot \mathbf{n}=0,\ i=1,...,N.
\end{equation}
If the homogeneous Neumann boundary condition is imposed for the Poisson equation, then the no-flux condition reduces to $\partial_{\mathbf{n}}c_i=0$.

In electrokinetic systems with fluid motion, ionic transport is further affected by advection, and the ion dynamics become coupled with the fluid flow. In this case, the concentrations are transported by diffusion, electro-migration, and advection, while the fluid is driven by viscosity, inertia, and the Coulomb force induced by the charge distribution. This bidirectional interaction between ion transport, electrostatic potential, and fluid motion is described by the PNP-NS system \eqref{PNPNS-model}:
\begin{subequations}\label{PNPNS-model}
    \begin{align}
        \frac{\partial c_i}{\partial t} + (\mathbf{u}\cdot \nabla)c_i &= D_i\nabla \cdot(\nabla c_i + z_i c_i \nabla \phi),\ i=1,...,N,\ {\rm in}\ \Omega\times I, \\
        -\epsilon_p^2 \Delta \phi &= \sum\limits_{i=1}^{N}z_i c_i,\ {\rm in}\ \Omega\times I,\\
        \frac{\partial \mathbf{u}}{\partial t} + (\mathbf{u}\cdot \nabla)\mathbf{u} - \nu \Delta \mathbf{u} + \nabla p &= - \left(\sum\limits_{i=1}^{N}z_i c_i\right)\nabla \phi,\ {\rm in}\ \Omega\times I, \\
        \nabla \cdot \mathbf{u} &= 0,\ {\rm in}\ \Omega\times I,
    \end{align}
\end{subequations}
where $\Omega$ is the spatial domain, $I$ is the time interval, $c_i(\mathbf{x},t)$ is the density of the $i$-th species, $\phi(\mathbf{x},t)$ is the electrostatic potential generated by the charged particles, $\mathbf{u}$ is the velocity field, $p$ is the pressure, the constants $D_i>0$ are the diffusion coefficients and $z_i\in\mathbb{R}$ are the valences of the ionic species, $\epsilon_p >0$ is the permittivity, and $\nu>0$ is the kinematic viscosity. The initial condition is given by
$$\mathbf{u}(\mathbf{x},0)=\mathbf{u}_0(\mathbf{x}),\ c_i(\mathbf{x},0)=c_{i,0}(\mathbf{x}),\ i=1,...,N.$$
We consider the blocking boundary conditions for the Nernst-Planck equation:
\begin{equation}
    (\mathbf{u}c_i-D_i(\nabla c_i+z_i c_i\nabla\phi))\cdot \mathbf{n}=0,\ i=1,...,N.
\end{equation}
We impose the homogeneous Neumann boundary condition for the Poisson equation and the no-slip boundary condition for the Navier--Stokes equations. Under the homogeneous blocking condition together with $\mathbf{u}=0$ and $\partial_{\mathbf{n}}\phi=0$, the boundary condition
\[
(\mathbf{u}c_i-D_i(\nabla c_i+z_i c_i\nabla\phi))\cdot \mathbf{n}=0
\]
reduces to $\partial_{\mathbf{n}}c_i=0$.

\begin{remark}
    Since the Poisson equation is equipped with Neumann boundary conditions, we assume the standard compatibility condition for the charge density whenever required. Under this condition, the electrostatic potential $\phi$ is determined only up to an additive constant. Throughout this paper, we fix this gauge by imposing $\int_\Omega \phi d\mathbf{x}=0$. Similarly, in the PNP-NS system, the pressure $p$ is also determined only up to an additive constant, and we use the normalization $\int_\Omega p d\mathbf{x}=0$. In the numerical implementation, these normalization conditions are incorporated into the least-squares system.
\end{remark}

For sufficiently smooth positive solutions under the corresponding homogeneous no-flux/blocking, Neumann, and no-slip boundary conditions, the PNP and PNP-NS systems formally satisfy the following free-energy laws. For convenience, we define $L_1(c_i,\phi)=\int_{\Omega} \sum_{i=1}^N D_i c_i|\nabla({\rm log}c_i + z_i \phi)|^2d\mathbf{x}$, and $L_2(c_i,\phi,\mathbf{u})=\int_{\Omega} \sum_{i=1}^N D_i c_i|\nabla({\rm log}c_i + z_i \phi)|^2+\nu|\nabla \mathbf{u}|^2d\mathbf{x}$.
\begin{equation}\label{PNP-e}
    E_1(t)=\int_{\Omega} \sum_{i=1}^N c_i {\rm log} c_i+\frac{\epsilon_p^2}{2}|\nabla \phi|^2d\mathbf{x},
\end{equation}
\begin{equation}\label{PNP-edt}
    \frac{dE_1(t)}{dt}=-\int_{\Omega} \sum_{i=1}^N D_i c_i|\nabla({\rm log}c_i + z_i \phi)|^2d\mathbf{x}, 
\end{equation}
\begin{equation}\label{PNPNS-e}
    E_2(t)=\int_{\Omega} \sum_{i=1}^N c_i {\rm log} c_i+\frac{\epsilon_p^2}{2}|\nabla \phi|^2+\frac{1}{2}|\mathbf{u}|^2d\mathbf{x},
\end{equation}
\begin{equation}\label{PNPNS-edt}
    \frac{dE_2(t)}{dt}=-\int_{\Omega} \sum_{i=1}^N D_i c_i|\nabla({\rm log}c_i + z_i \phi)|^2+\nu|\nabla \mathbf{u}|^2d\mathbf{x},
\end{equation}

\subsection{SO-RaNN for the PNP system}\label{RaNN-PNP}

For simplicity, in this section we present the two-ion case with initial data and homogeneous Neumann conditions for both the concentrations and the electrostatic potential. This corresponds to the homogeneous no-flux condition when $\partial_{\mathbf n}\phi=0$:
\begin{subequations}\label{PNP-model-2}
    \begin{align}
        \frac{\partial c_1}{\partial t} &= D_1 \nabla \cdot(\nabla c_1 + z_1 c_1 \nabla \phi),\ {\rm in}\ \Omega\times I, \\
        \frac{\partial c_2}{\partial t} &= D_2 \nabla \cdot(\nabla c_2 + z_2 c_2 \nabla \phi),\ {\rm in}\ \Omega\times I, \\
        -\epsilon_p^2 \Delta \phi &= z_1 c_1 + z_2 c_2,\ {\rm in}\ \Omega\times I, \\
        c_1(\mathbf{x},0) &=g_{1}(\mathbf{x}),\ {\rm on}\ \Omega\times \{0\},\\
        c_2(\mathbf{x},0) &=g_{2}(\mathbf{x}),\ {\rm on}\ \Omega\times \{0\},\\
        \frac{\partial c_1}{\partial \mathbf{n}} &=\frac{\partial c_2}{\partial \mathbf{n}}=\frac{\partial \phi}{\partial \mathbf{n}}=0,\ {\rm on}\ \partial \Omega \times I.
    \end{align}
\end{subequations}

For time-dependent problems, traditional numerical methods require time discretization, which leads
to error accumulation and makes it difficult to perform long-time simulations. Therefore, we employ a space-time RaNN approach, in which time and spatial variables are treated simultaneously, the time-dependent problem is reformulated as a space-time problem on \(Q=\Omega\times I\), with initial and boundary conditions imposed through the least-squares residual. We use three RaNNs $\bar{c}_{1,\rho},\ \bar{c}_{2,\rho},\ \bar{\phi}_{\rho}$ to approximate $c_1,\ c_2$ and $\phi$ separately. For convenience, we assume all networks have the same number of basis functions. Then we can define:
\begin{equation}\label{linear-combination1}
    \bar{c}_{i,\rho}=\sum_{k=1}^m \alpha^{i}_{k} \psi^{i}_{k},\ i=1,2,\ \bar{\phi}_{\rho}=\sum_{k=1}^m \beta_k \psi^3_k.
\end{equation}

The PNP system \eqref{PNP-model-2} is nonlinear, but the RaNN is in the form of a linear combination, so one may either solve a nonlinear least-squares problem directly or consider the nonlinear iterative method. Here we use an iteration method, specifically the Picard iteration \eqref{PNP-ite}. 

\begin{subequations}\label{PNP-ite}
    \begin{align}
        \frac{\partial c^{n+1}_1}{\partial t} &= D_1 \nabla \cdot\left(\nabla c^{n+1}_1+z_1 c^{n+1}_1\nabla \phi^{n}\right),\ {\rm in}\ \Omega\times I, \label{PNP-ite1}\\
        \frac{\partial c^{n+1}_2}{\partial t} &= D_2 \nabla \cdot\left(\nabla c^{n+1}_2+z_2 c^{n+1}_2\nabla \phi^{n}\right),\ {\rm in}\ \Omega\times I, \label{PNP-ite2}\\
        -\epsilon_p^2 \Delta \phi^{n+1} &= z_1 c^{n+1}_1 + z_2 c^{n+1}_2,\ {\rm in}\ \Omega\times I, \label{PNP-ite3}\\
        c^{n+1}_1(\mathbf{x},0) &=g_{1}(\mathbf{x}),\ {\rm on}\ \Omega\times \{0\}, \label{PNP-ite-ini1}\\
        c^{n+1}_2(\mathbf{x},0) &=g_{2}(\mathbf{x}),\ {\rm on}\ \Omega\times \{0\}, \label{PNP-ite-ini2}\\
        \frac{\partial c^{n+1}_1}{\partial \mathbf{n}} &=\frac{\partial c^{n+1}_2}{\partial \mathbf{n}}=\frac{\partial \phi^{n+1}}{\partial \mathbf{n}}=0,\ {\rm on}\ \partial \Omega \times I. \label{PNP-ite-boundary}
    \end{align}
\end{subequations}

In particular, we initialize the RaNNs to obtain $c^0_{1},\ c^0_{2},\ \phi^0$, and then perform the Picard iteration \eqref{PNP-ite}. At iteration step $n+1$, with the potential $\bar\phi_\rho^n$ fixed from
the previous iteration, we first solve the two linear least-squares problems
corresponding to the Nernst-Planck equations \eqref{PNP-ite1} and \eqref{PNP-ite2}. This gives
the raw concentration outputs
$\bar c_{1,\rho}^{n+1}$ and $\bar c_{2,\rho}^{n+1}$.
We then apply the pointwise cut-off $c_{i,\rho}^{+,n+1}:=\sigma_\delta(\bar c_{i,\rho}^{n+1}),\ \sigma_\delta(s)=\max\{s,\delta\},\quad i=1,2.$ The positivity-corrected concentrations $c_{1,\rho}^{+,n+1}$ and $c_{2,\rho}^{+,n+1}$ are used as the charge-density input in the subsequent gauge-fixed Poisson least-squares fitting step associated with \eqref{PNP-ite3}, from which we obtain the raw potential $\bar\phi_\rho^{n+1}$. They are also used as the value fields passed to the next Picard iteration. All differential operators in the Nernst-Planck residuals are still evaluated from the raw outputs $\bar c_{i,\rho}^{n+1}$, so that the non-smooth cut-off is used only at the value level. 

After the nonlinear iteration converges, we obtain the raw RaNN outputs $\bar{c}_{i,\rho}$ and $\bar{\phi}_\rho$, together with the positivity-corrected value fields $c^+_{i,\rho}$. We then apply a discrete-time mass correction to obtain corrected concentrations $\tilde{c}_{i,\rho}$ and use an SAV-type post-processing step to define an intermediate SAV-scaled potential \(\tilde\phi_\rho\). This step is designed to enforce positivity at the value level, match the prescribed masses at selected correction instants, and introduce monotonicity of the SAV auxiliary variable under the ideal SAV update. With $\tilde{\phi}_\rho$ fixed, we perform one additional linearized Nernst--Planck solve, apply the value-level cut-off and the final mass correction, and finally compute the final potential $\phi_\rho$ by a gauge-fixed Poisson LS step using the corrected concentrations.

In each iteration step, we obtain a linear LS problem \eqref{PNP-LS} by discretizing equations \eqref{PNP-ite} at $N_c$ collocation points including $N_1$ interior points, $N_2$ boundary points and $N_3$ initial points. In order to reduce storage space and computational load, we solve three decoupled linear LS problems \eqref{PNP-LS1}, \eqref{PNP-LS2} and \eqref{PNP-LS3}, which correspond to \eqref{PNP-ite1}, \eqref{PNP-ite2} and \eqref{PNP-ite3} (with boundary and initial conditions). Within each raw Picard step, these decoupled LS systems are solved sequentially as part of the same outer Picard iteration; no additional inner nonlinear iteration is introduced.

\begin{subequations}\label{PNP-LS}
    \begin{gather}
    A^1X_1=F^1,\ B^1X_1=G^1,\ D^1X_1=H^1,\label{PNP-LS1}\\
    A^2X_2=F^2,\ B^2X_2=G^2,\ D^2X_2=H^2,\label{PNP-LS2}\\
    A^3X_3=F^3,\ B^3X_3=G^3.\label{PNP-LS3}
    \end{gather}
\end{subequations}

We update parameters $\alpha^1_{k},\ \alpha^2_{k},\ \beta_{k},\ k=1,...,m$ in each iteration, and obtain the approximation $c^+_{i,\rho},\ \bar{\phi}_{\rho}$ after the iteration convergence. Then we sample time points $t_{j},\ j=1,...,N_t$ in $I$, and apply the post-processing steps to match the prescribed masses at the selected correction instants and to introduce an SAV auxiliary-variable monotonicity mechanism under the ideal SAV update:

(i) Positivity correction during the iteration. At each Picard step, we introduce the positivity cut-off map $\sigma_{\delta}(x)={\rm max}(x,\delta),\ \delta>0$ to cut off the output $\bar{c}_{i,\rho}$, and set $c^+_{i,\rho}=\sigma_{\delta}(\bar{c}_{i,\rho})$. Thus positivity is enforced at the value level in every iteration step. To avoid differentiating the non-smooth cut-off, we evaluate the differential operators from the raw network output $\bar{c}_{i,\rho}$ rather than from $\sigma_{\delta}(\bar{c}_{i,\rho})$. The cut-off is used only at the value level to enforce positivity.

(ii) Mass correction after convergence. After the nonlinear iteration converges, we sample time points $t_j,\ j=1,...,N_t$, and compute mass $m^j_1=\int_{\Omega}c^+_{1,\rho}(\mathbf{x},t_j)d\mathbf{x},\ m^j_2=\int_{\Omega}c^+_{2,\rho}(\mathbf{x},t_j)d\mathbf{x},\ j=1,...,N_t$, let $\gamma^j_1 = \frac{m^0_1}{m^j_1},\ \gamma^j_2 = \frac{m^0_2}{m^j_2}$, where $m^0_1,\ m^0_2$ are initial mass, so $\gamma^j_i >0$. Interpolating $\{\gamma^j_i\}$ in time yields a continuous scaling function $\gamma_i(t),\ i=1,2$, and the mass-corrected concentration is defined by  $\tilde{c}_{i,\rho}=\gamma_i(t)c^+_{i,\rho}$. This construction enforces exact mass matching at the selected correction instants $t_j$, while between two adjacent correction instants the mass is only approximately preserved. For mass-conservative homogeneous problems, the target mass is the initial mass. For a source-driven problem with a nonzero net source, this target should be replaced by the prescribed or exact mass at the corresponding correction instant.

(iii) SAV correction after convergence. Based on the mass-corrected
concentrations $\tilde c_{i,\rho}$ and the raw potential $\bar\phi_\rho$,
we define, at the selected correction instants $t_j$,
\[
\mathcal E_1^{j}
:= E_1(\tilde c_{1,\rho}^{j},\tilde c_{2,\rho}^{j},\bar\phi_\rho^{j}),
\qquad
\mathcal D_1^{j}
:= L_1(\tilde c_{1,\rho}^{j},\tilde c_{2,\rho}^{j},\bar\phi_\rho^{j}).
\]
The constant $C_0$ is chosen so that $\mathcal E_1^{j}+C_0>0$
for all correction instants. We initialize
\[
R_1^0=\mathcal E_1^0+C_0
\]
and update the auxiliary SAV variable by
\[
\frac{R_1^{j+1}-R_1^j}{\Delta t_j}
=
-\frac{R_1^{j+1}}{\mathcal E_1^{j+1}+C_0}\,
\mathcal D_1^{j+1}.
\]
Equivalently,
\[
R_1^{j+1}
=
\frac{R_1^j}
{1+\Delta t_j
\mathcal D_1^{j+1}/(\mathcal E_1^{j+1}+C_0)}.
\]
Since $\mathcal D_1^{j+1}\ge 0$ and $\mathcal E_1^{j+1}+C_0>0$,
the ideal SAV update gives $R_1^{j+1}\le R_1^j$.
We then set
\[
\xi^{j+1}=
\frac{R_1^{j+1}}{\mathcal E_1^{j+1}+C_0},
\]
interpolate $\{\xi^j\}$ in time, and define
$\tilde\phi_\rho=\xi(t)\bar\phi_\rho$.
The monotone quantity is the auxiliary sequence $R_1^j$.
We do not claim direct monotonicity of the original free energy
$E_1(\tilde c_{1,\rho},\tilde c_{2,\rho},\tilde\phi_\rho)$ unless an additional
consistency estimate between $R_1^j$ and the original energy is established.

In implementation, when the computed $\xi^j$ deviates excessively from one due to extremely small concentrations, we optionally replace it by $1$ as a numerical safeguard, see Remark \ref{xi}. This truncation is introduced only for robustness and is not included in the dissipation proof of Section~\ref{theory}. We formalize the preceding discussion into the Algorithm \ref{PNP-algorithm}.

\begin{algorithm}
\caption{SO-RaNNs for the PNP system}\label{PNP-algorithm}
\SetKwData{Left}{left}\SetKwData{This}{this}\SetKwData{Up}{up}
\SetKwFunction{Union}{Union}\SetKwFunction{FindCompress}{FindCompress}
\SetKwInOut{Input}{input}\SetKwInOut{Output}{output}
\Input{Computational domain $\Omega\times I$; Gauss-Legendre integration points $\mathbf{x}_{\rm int}$ and weights $w_{\rm int}$; iteration error $e=1$; threshold value $\eta$ and $\eta_{\phi}$; maximum number $M_{\rm ite}$ of iterations}
\Output{RaNN approximations $\tilde{c}_{1,\rho},\ \tilde{c}_{2,\rho}$ and the final potential $\phi_{\rho}$}
Randomly sample points in $\Omega\times I,\ \partial\Omega \times I$ and $\Omega \times \{0\}$ according to a uniform distribution; uniformly choose $t_j$ on $I$\;
Initialize RaNNs $\bar{c}_{1,\rho},\ \bar{c}_{2,\rho},\ \bar{\phi}_{\rho},$ and $e^0=1,\ n=0$, assemble the LS matrices\;
\While{$e^n>\eta$ and $n< M_{\rm ite}$}{solve the LS system associated with \eqref{PNP-ite1} to get $\bar{c}^{n+1}_{1,\rho}$ from $\bar{\phi}^n_{\rho}$\;
solve the LS system associated with \eqref{PNP-ite2} to get $\bar{c}^{n+1}_{2,\rho}$ from $\bar{\phi}^n_{\rho}$\;
obtain $c^{+,n+1}_{1,\rho}$ and $c^{+,n+1}_{2,\rho}$ from $\bar{c}^{n+1}_{1,\rho}$ and $\bar{c}^{n+1}_{2,\rho}$\;
solve the gauge-fixed Poisson LS system associated with \eqref{PNP-ite3} to obtain $\bar{\phi}^{n+1}_{\rho}$ from the charge density generated by $c^{+,n+1}_{1,\rho}$ and $c^{+,n+1}_{2,\rho}$\;
calculate iteration error $e^{n+1}:=\max\{\Vert c^{+,n+1}_{1,\rho}-c^{+,n}_{1,\rho}\Vert_0,\ \Vert c^{+,n+1}_{2,\rho}-c^{+,n}_{2,\rho}\Vert_0,\ \Vert \bar{\phi}^{n+1}_{\rho}-\bar{\phi}^{n}_{\rho}\Vert_0\}$\;
$n=n+1$\;}
calculate $\gamma^j_1,\ \gamma^j_2$, then obtain $\gamma_1(t),\ \gamma_2(t)$ and $\tilde{c}_{1,\rho},\ \tilde{c}_{2,\rho}$\;
compute $\mathcal E_1^j$ and $\mathcal D_1^j$ at the selected correction instants, choose $C_0$ such that $\mathcal E_1^j+C_0>0$, set $R_1^0=\mathcal E_1^0+C_0$, update $R_1^{j+1}=R_1^j/(1+\Delta t_j\mathcal D_1^{j+1}/(\mathcal E_1^{j+1}+C_0))$, and set $\xi^{j+1}=R_1^{j+1}/(\mathcal E_1^{j+1}+C_0)$\;
optionally apply the numerical safeguard: \If{$|\xi^j-1|>\eta_{\phi}$}{$\xi^j=1$}
obtain $\xi(t)$ and $\tilde{\phi}_{\rho}$\;
solve the LS system associated with \eqref{PNP-ite1} to get $\bar{c}_{1,\rho}$ from $\tilde{\phi}_{\rho}$\;
solve the LS system associated with \eqref{PNP-ite2} to get $\bar{c}_{2,\rho}$ from $\tilde{\phi}_{\rho}$\;
obtain $c^{+}_{1,\rho}$ and $c^{+}_{2,\rho}$ from $\bar{c}_{1,\rho}$ and $\bar{c}_{2,\rho}$\;
calculate the final scaling factors $\gamma^j_1,\ \gamma^j_2$, obtain $\gamma_1(t),\ \gamma_2(t)$ and update the final corrected concentrations $\tilde{c}_{1,\rho},\ \tilde{c}_{2,\rho}$\;
solve the gauge-fixed Poisson LS system associated with \eqref{PNP-ite3} to obtain the final potential $\phi_\rho$ from the corrected concentrations $\tilde{c}_{1,\rho}$ and $\tilde{c}_{2,\rho}$\;
\end{algorithm}

\begin{remark}\label{points}
    We determine the number $N_1,\ N_2,\ N_3$ of sampling points (interior, boundary and initial points) by the measurement ratio. For instance, $\Omega\times I=[a,b]\times[c,d]\times[0,T]$, set measurement-ratio parameter $n_e$, which means that we collect $n_e$ points at each unit length. Then $N_1=(b-a)(d-c)Tn^3_e,\ N_2=(2(b-a)+2(d-c))Tn^2_e,\ N_3=(b-a)(d-c)n^2_e$. We need to ensure that the system \eqref{PNP-LS} and \eqref{PNPNS-LS} are overdetermined by selecting suitable $n_e$ and $m$.
\end{remark}

\begin{remark}\label{xi}
    Under extreme conditions where the ion concentrations are close to zero, the denominator in the SAV factor may become very small, which can lead to an excessively small value of $\xi^j$. To enhance numerical robustness, we optionally introduce a safeguard parameter $\eta_\phi:$ if $|\xi^j-1|>\eta_\phi$, we reset $\xi^j=1$. This reset is an implementation-level stabilization device. The dissipation statement in Section 3.2.3 applies to the ideal SAV update before this safeguard is activated. When the safeguard is used, we do not claim the same unconditional dissipation property without additional analysis.
\end{remark}

\begin{remark}\label{long-time}
    When we consider long-time simulation, the NN-based space-time approach requires a large number of sampling points and more neurons, so it is a natural idea to implement temporal blocking. We divide $I$ into $I_l,\ l=1,...,N_l$, employ Algorithm \ref{PNP-algorithm} in each time block, the initial conditions are given by prior approximations when $l>1$. This temporal-blocking strategy is consistent with the local-in-time nature of Theorem~\ref{outer-picard}, although a full block-by-block stability analysis is not pursued here.
\end{remark}

\begin{remark}\label{compatibility}
For the homogeneous Neumann Poisson equation, the continuous compatibility condition is
\[
    \int_\Omega \sum_{i=1}^N z_i c_i(x,t)\,dx=0 .
\]
In the continuous Picard analysis, this condition is imposed through the admissible class, so the zero-mean Neumann Poisson operator is well defined.

In the implemented RaNN method, however, the Nernst--Planck subproblems are solved by least squares. Hence the no-flux boundary condition, the species mass identities, and the resulting charge-neutrality condition are not imposed as hard constraints at the raw-output level. The raw charge defect is not ignored: by Proposition~\ref{prop:raw-charge-defect}, it is controlled by the concentration approximation errors. The positivity cut-off is also not designed to enforce charge neutrality; it only introduces the additional controlled value-level perturbation estimated in Proposition~\ref{prop:charge-cutoff}.

Consequently, the implemented Poisson step should not be identified with a classical compatible Neumann Poisson solve unless the computed charge density satisfies the compatibility condition. In the numerical algorithm of this paper we keep the computed charge density unchanged and interpret the Poisson step as a gauge-fixed least-squares potential fit to the computed charge density.
\end{remark}
\subsection{SO-RaNN for the PNP-NS system}\label{RaNN-PNPNS}

For simplicity, we again present the two-ion case under homogeneous boundary conditions:
\begin{subequations}\label{PNPNS-model-2}
    \begin{align}
        \frac{\partial c_1}{\partial t} + (\mathbf{u}\cdot \nabla)c_1 &= D_1\nabla \cdot(\nabla c_1 + z_1 c_1 \nabla \phi),\ {\rm in}\ \Omega\times I, \\
        \frac{\partial c_2}{\partial t} + (\mathbf{u}\cdot \nabla)c_2 &= D_2\nabla \cdot(\nabla c_2 + z_2 c_2 \nabla \phi),\ {\rm in}\ \Omega\times I, \\
        -\epsilon_p^2 \Delta \phi &= z_1 c_1 + z_2 c_2,\ {\rm in}\ \Omega\times I, \label{PNPNS3}\\
        \frac{\partial \mathbf{u}}{\partial t} + (\mathbf{u}\cdot \nabla)\mathbf{u} - \nu \Delta \mathbf{u} + \nabla p&= - (z_1 c_1+z_2 c_2)\nabla \phi,\ {\rm in}\ \Omega\times I, \\
        \nabla \cdot \mathbf{u} &= 0,\ {\rm in}\ \Omega\times I, \label{PNPNS2-df}\\
        c_1(\mathbf{x},0) &=g_{1}(\mathbf{x}),\ {\rm on}\ \Omega\times \{0\},\\
        c_2(\mathbf{x},0) &=g_{2}(\mathbf{x}),\ {\rm on}\ \Omega\times \{0\},\\
        \mathbf{u}(\mathbf{x},0) &=\mathbf{g}_{3}(\mathbf{x}),\ {\rm on}\ \Omega\times \{0\},\\
        \frac{\partial c_1}{\partial \mathbf{n}} &=\frac{\partial c_2}{\partial \mathbf{n}}=\frac{\partial \phi}{\partial \mathbf{n}}=0,\ {\rm on}\ \partial \Omega \times I,\\
        \mathbf{u}&=\mathbf{0},\ {\rm on}\ \partial \Omega \times I.
    \end{align}
\end{subequations}

Similar to section \ref{RaNN-PNP}, we use four RaNNs $\bar{c}_{1,\rho},\ \bar{c}_{2,\rho},\ \bar{\phi}_{\rho},\ p_{\rho}$ to approximate $c_1,\ c_2,\ \phi,\ p$ separately, and use a SP-RaNN $\mathbf{u}_{\rho}$ to approximate $\mathbf{u}$. For convenience, we assume all networks have the same number of basis functions. Then we can define:
\begin{equation}\label{linear-combination2}
    \bar{c}_{i,\rho}=\sum_{k=1}^m \alpha^{i}_{k} \psi^{i}_{k},\ i=1,2,\ \bar{\phi}_{\rho}=\sum_{k=1}^m \alpha^{\phi}_k \psi^3_k,\ p_\rho=\sum_{k=1}^m \alpha^{p}_k \psi^4_k,\ \mathbf{u}_{\rho}=\sum_{k=1}^m \alpha^{\mathbf{u}}_k \bm{\Psi}_k.
\end{equation}
Here $\bm{\Psi}_k$ denotes the \(k\)-th vector-valued divergence-free SP-RaNN basis function. The divergence-free equation is not included as a separate least-squares residual, since the SP-RaNN velocity approximation $\mathbf{u}_{\rho}$ is divergence-free pointwise by construction (\cite{Li2026}). Here we still show the Picard's iteration formulation \eqref{PNPNS-ite}:
\begin{subequations}\label{PNPNS-ite}
    \begin{align}
        \frac{\partial c^{n+1}_1}{\partial t} + (\mathbf{u}^{n}\cdot \nabla)c^{n+1}_1 &= D_1 \nabla \cdot(\nabla c^{n+1}_1 + z_1 c^{n+1}_1 \nabla \phi^{n}),\ {\rm in}\ \Omega\times I, \label{PNPNS-ite1}\\
        \frac{\partial c^{n+1}_2}{\partial t} + (\mathbf{u}^{n}\cdot \nabla)c^{n+1}_2 &= D_2 \nabla \cdot(\nabla c^{n+1}_2 + z_2 c^{n+1}_2 \nabla \phi^{n}),\ {\rm in}\ \Omega\times I, \label{PNPNS-ite2}\\
        -\epsilon_p^2 \Delta \phi^{n+1} &= z_1 c^{n+1}_1 + z_2 c^{n+1}_2,\ {\rm in}\ \Omega\times I, \label{PNPNS-ite3}\\
        \frac{\partial \mathbf{u}^{n+1}}{\partial t} + (\mathbf{u}^{n}\cdot \nabla)\mathbf{u}^{n+1} - \nu \Delta \mathbf{u}^{n+1} + \nabla p^{n+1} &= - (z_1 c^{n+1} _1+z_2 c^{n+1} _2)\nabla \phi^{n+1},\ {\rm in}\ \Omega\times I, \label{PNPNS-ite4}\\
        c^{n+1}_1(\mathbf{x},0) &=g_{1}(\mathbf{x}),\ {\rm on}\ \Omega\times \{0\},\\
        c^{n+1}_2(\mathbf{x},0) &=g_{2}(\mathbf{x}),\ {\rm on}\ \Omega\times \{0\},\\
        \mathbf{u}^{n+1}(\mathbf{x},0) &=\mathbf{g}_{3}(\mathbf{x}),\ {\rm on}\ \Omega\times \{0\},\\
        \frac{\partial c^{n+1}_1}{\partial \mathbf{n}} &=\frac{\partial c^{n+1}_2}{\partial \mathbf{n}}=\frac{\partial \phi^{n+1}}{\partial \mathbf{n}}=0,\ {\rm on}\ \partial \Omega \times I,\\
        \mathbf{u}^{n+1}&=\mathbf{0},\ {\rm on}\ \partial \Omega \times I.
    \end{align}
\end{subequations}

Similarly, we initialize RaNNs and substitute \eqref{linear-combination2} into equations \eqref{PNPNS-ite}. After discretizing equations \eqref{PNPNS-ite} at collocation points, in each iteration step, we solve four linear LS problems \eqref{PNPNS-LS1}, \eqref{PNPNS-LS2}, \eqref{PNPNS-LS3}, \eqref{PNPNS-LS4}, which correspond to \eqref{PNPNS-ite1}, \eqref{PNPNS-ite2}, \eqref{PNPNS-ite3} and \eqref{PNPNS-ite4} (with boundary and initial conditions).

\begin{subequations}\label{PNPNS-LS}
    \begin{gather}
    A^1X_1=F^1,\ B^1X_1=G^1,\ D^1X_1=H^1,\label{PNPNS-LS1}\\
    A^2X_2=F^2,\ B^2X_2=G^2,\ D^2X_2=H^2,\label{PNPNS-LS2}\\
    A^3X_3=F^3,\ B^3X_3=G^3,\label{PNPNS-LS3}\\
    A^4X_4=F^4,\ B^4X_4=G^4,\ D^4X_4=H^4,\label{PNPNS-LS4}
    \end{gather}
\end{subequations}

As in the PNP case, the positivity cut-off is performed at each nonlinear iteration step on the raw concentration outputs, while the mass correction and the SAV correction are applied after convergence of the outer iteration, and the SAV-scaled potential is used in an additional post-processing iteration. The positivity-corrected concentrations $c^{+,n+1}_{i,\rho}$ are then used in the subsequent Poisson update, in the velocity-pressure update, and in the following Picard iteration, whereas the differential operators in the PDE residuals are still evaluated from the raw outputs.

We update the parameters $\alpha_k^1$, $\alpha_k^2$, $\alpha_k^\phi$, $\alpha_k^p$, and $\alpha_k^u$, $k=1,\ldots,m$, in each iteration, and after convergence obtain the positivity-corrected concentrations $c^+_{i,\rho}$ together with the raw outputs $\bar\phi_\rho$, $\mathbf{u}_\rho$, and $p_\rho$. We then sample time points $t_j$, $j=1,\ldots,N_t$, and apply the post-processing steps to correct the concentrations in mass and to introduce a monotonicity mechanism for the SAV auxiliary variable under the ideal SAV update.

(i) Positivity correction during the iteration. At each Picard step, we introduce the positivity cut-off map $\sigma_\delta(x)=\max(x,\delta)$, $\delta>0$, and set $c^+_{i,\rho}=\sigma_\delta(\bar c_{i,\rho})$. Thus positivity is enforced at the value level in every iteration step. To avoid differentiating the non-smooth cut-off, we evaluate the differential operators from the raw network output $\bar c_{i,\rho}$ rather than from $\sigma_\delta(\bar c_{i,\rho})$.

(ii) Mass correction after convergence. Compute mass $m^j_1=\int_{\Omega}c^+_{1,\rho}(\mathbf{x},t_j)d\mathbf{x},\ m^j_2=\int_{\Omega}c^+_{2,\rho}(\mathbf{x},t_j)d\mathbf{x},\ j=1,...,N_t$, let $\gamma^j_1 = \frac{m^0_1}{m^j_1},\ \gamma^j_2 = \frac{m^0_2}{m^j_2}$, where $m^0_1,\ m^0_2$ are initial mass, so $\gamma^j_i >0$. Interpolating $\{\gamma_i^j\}$ in time yields continuous scaling functions $\gamma_i(t)$, $i=1,2$, and we define the corrected concentrations by $\tilde c_{i,\rho}=\gamma_i(t)c^+_{i,\rho}$. This construction enforces exact mass matching at the selected correction instants $t_j$, while between two adjacent correction instants the mass is approximately preserved. For mass-conservative homogeneous problems, the target mass is the initial mass. For a source-driven problem with a nonzero net source, this target should be replaced by the prescribed or exact mass at the corresponding correction instant.

(iii) SAV correction after convergence. Based on the mass-corrected
concentrations $\tilde c_{i,\rho}$, the raw potential $\bar\phi_\rho$,
and the velocity $\mathbf u_\rho$, we define, at the selected correction
instants $t_j$,
\[
\mathcal E_2^{j}
:=
E_2(\tilde c_{1,\rho}^{j},\tilde c_{2,\rho}^{j},
\bar\phi_\rho^{j},\mathbf u_\rho^{j}),
\qquad
\mathcal D_2^{j}
:=
L_2(\tilde c_{1,\rho}^{j},\tilde c_{2,\rho}^{j},
\bar\phi_\rho^{j},\mathbf u_\rho^{j}).
\]
The constant $C_0$ is chosen sufficiently large so that
\[
\mathcal E_2^{j}+C_0>0
\]
for all correction instants. We initialize the auxiliary SAV variable by
\[
R_2^0=\mathcal E_2^0+C_0
\]
and update it through the ideal SAV relation
\begin{equation}\label{PNPNS-diss-d}
\frac{R_2^{j+1}-R_2^j}{\delta t_j}
=
-\frac{R_2^{j+1}}{\mathcal E_2^{j+1}+C_0}
\mathcal D_2^{j+1},
\end{equation}
where $\delta t_j=t_{j+1}-t_j$. Equivalently,
\begin{equation}\label{PNPNS-e-correction}
R_2^{j+1}
=
\frac{R_2^j}
{1+\delta t_j\,\mathcal D_2^{j+1}/(\mathcal E_2^{j+1}+C_0)}.
\end{equation}
Since $\mathcal D_2^{j+1}\ge 0$ and $\mathcal E_2^{j+1}+C_0>0$,
the ideal SAV update gives
\[
R_2^{j+1}\le R_2^j .
\]
We then define
\[
\xi^{j+1}
=
\frac{R_2^{j+1}}{\mathcal E_2^{j+1}+C_0},
\]
interpolate the values $\{\xi^j\}$ in time to obtain $\xi(t)$, and update
the potential by
\[
\tilde\phi_\rho=\xi(t)\bar\phi_\rho
=
\sum_{k=1}^m \xi(t)\alpha_k^\phi\psi_k^3 .
\]
The monotone quantity in this step is the auxiliary SAV sequence $R_2^j$.
We do not claim direct monotonicity of the original free energy
$E_2(\tilde c_{1,\rho},\tilde c_{2,\rho},\tilde\phi_\rho,\mathbf u_\rho)$
unless an additional consistency estimate between $R_2^j$ and the original
energy is established.

In conclusion, for all $(\mathbf{x},t)\in\Omega\times I$, the SP-RaNN
velocity satisfies the spatial incompressibility constraint $\nabla\cdot\mathbf u_\rho=0$ pointwise. Since
$\gamma_i(t)>0$, the mass-corrected concentrations remain positive; more
precisely,
\[
\tilde c_{i,\rho}(\mathbf{x},t)
\ge
\delta\,\gamma_i(t),
\]
and hence $\tilde c_{i,\rho}$ is positive whenever the interpolated scaling
factor $\gamma_i(t)$ is positive. At each selected correction instant $t_j$,
\[
\int_\Omega \tilde c_{i,\rho}(\mathbf{x},t_j)\,d\mathbf{x}=m_i^0.
\]
For the ideal SAV update, the auxiliary SAV sequence $R_2^j$ is nonincreasing. This monotonicity concerns the SAV auxiliary variable, not necessarily the original free energy evaluated at the fully post-processed fields. We formalize the preceding discussion into the Algorithm \ref{PNPNS-algorithm}, where $\mathbf{u}^n_{\rho}$ denotes the converged velocity field from the outer Picard iteration before the final post-processing update.

\begin{algorithm}
\caption{SO-RaNNs for the PNP-NS system}
\label{PNPNS-algorithm}
\SetKwData{Left}{left}\SetKwData{This}{this}\SetKwData{Up}{up}
\SetKwFunction{Union}{Union}\SetKwFunction{FindCompress}{FindCompress}
\SetKwInOut{Input}{input}\SetKwInOut{Output}{output}
\Input{Computational domain $\Omega\times I$; Gauss-Legendre integration points $\mathbf{x}_{\rm int}$ and weights $w_{\rm int}$; iteration error $e=1$; threshold value $\eta$ and $\eta_{\phi}$; maximum number $M_{\rm ite}$ of iterations}
\Output{RaNN approximations $\tilde{c}_{1,\rho},\ \tilde{c}_{2,\rho}$, the final potential $\phi_{\rho}$, the divergence-free velocity $\mathbf{u}_{\rho}$, and the pressure approximation $p_{\rho}$}
Randomly sample points in $\Omega\times I,\ \partial\Omega \times I$ and $\Omega \times \{0\}$ according to a uniform distribution; uniformly choose $t_j$ on $I$\;
Initialize RaNNs $\bar{c}_{1,\rho},\ \bar{c}_{2,\rho},\ \bar{\phi}_{\rho},\ p_\rho,\ \mathbf{u}_\rho$, and $e^0=1,\ n=0$, assemble the LS matrices\;
\While{$e^n>\eta$ and $n< M_{\rm ite}$}{solve the LS system associated with \eqref{PNPNS-ite1} to get $\bar{c}^{n+1}_{1,\rho}$ from $\bar{\phi}^n_{\rho}$ and $\mathbf{u}^{n}_{\rho}$\;
solve the LS system associated with \eqref{PNPNS-ite2} to get $\bar{c}^{n+1}_{2,\rho}$ from $\bar{\phi}^n_{\rho}$ and $\mathbf{u}^{n}_{\rho}$\;
obtain $c^{+,n+1}_{1,\rho}$ and $c^{+,n+1}_{2,\rho}$ from $\bar{c}^{n+1}_{1,\rho}$ and $\bar{c}^{n+1}_{2,\rho}$\;
solve the gauge-fixed Poisson LS system associated with \eqref{PNPNS-ite3} to obtain the raw potential $\bar{\phi}^{n+1}_{\rho}$ from the charge density generated by the positivity-corrected concentrations $c^{+,n+1}_{1,\rho}$ and $c^{+,n+1}_{2,\rho}$\;
solve the LS system associated with \eqref{PNPNS-ite4} to get $p^{n+1}_{\rho}$ and $\mathbf{u}^{n+1}_{\rho}$ from $\mathbf{u}^{n}_{\rho}$, $c^{+,n+1}_{1,\rho}$, $c^{+,n+1}_{2,\rho}$ and $\bar{\phi}^{n+1}_{\rho}$\;
calculate iteration error $e^{n+1}:=\max\{\Vert c^{+,n+1}_{1,\rho}-c^{+,n}_{1,\rho}\Vert_0,\ \Vert c^{+,n+1}_{2,\rho}-c^{+,n}_{2,\rho}\Vert_0,\ \Vert \bar{\phi}^{n+1}_{\rho}-\bar{\phi}^{n}_{\rho}\Vert_0,\ \Vert \mathbf{u}^{n+1}_{\rho}-\mathbf{u}^{n}_{\rho}\Vert_0,\ \Vert p^{n+1}_{\rho}-p^{n}_{\rho}\Vert_0\}$\;
$n=n+1$\;}
After convergence of the outer Picard iteration, denote the last raw velocity iterate by $\bar{\mathbf u}_\rho:=\mathbf u_\rho^n$.\;
calculate $\gamma^j_1,\ \gamma^j_2$, then obtain $\gamma_1(t),\ \gamma_2(t)$ and $\tilde{c}_{1,\rho},\ \tilde{c}_{2,\rho}$\;
compute $\mathcal E_2^j$ and $\mathcal D_2^j$ at the selected correction instants, choose $C_0$ such that $\mathcal E_2^j+C_0>0$, set $R_2^0=\mathcal E_2^0+C_0$, update $R_2^{j+1}=R_2^j/(1+\Delta t_j\mathcal D_2^{j+1}/(\mathcal E_2^{j+1}+C_0))$, and set $\xi^{j+1}=R_2^{j+1}/(\mathcal E_2^{j+1}+C_0)$\;
optionally apply the numerical safeguard: \If{$|\xi^j-1|>\eta_{\phi}$}{$\xi^j=1$}
obtain $\xi(t)$ and $\tilde{\phi}_{\rho}$\;
solve the LS system associated with \eqref{PNPNS-ite1} to get $\bar{c}_{1,\rho}$ from $\tilde{\phi}_{\rho}$ and $\bar{\mathbf u}_{\rho}$\;
solve the LS system associated with \eqref{PNPNS-ite2} to get $\bar{c}_{2,\rho}$ from $\tilde{\phi}_{\rho}$ and $\bar{\mathbf u}_{\rho}$\;
obtain $c^{+}_{1,\rho}$ and $c^{+}_{2,\rho}$ from $\bar{c}_{1,\rho}$ and $\bar{c}_{2,\rho}$\;
calculate the final scaling factors $\gamma^j_1,\ \gamma^j_2$, obtain $\gamma_1(t),\ \gamma_2(t)$ and update the final corrected concentrations $\tilde{c}_{1,\rho},\ \tilde{c}_{2,\rho}$\;
solve the gauge-fixed Poisson LS system associated with \eqref{PNPNS-ite3} to obtain the final potential $\phi_{\rho}$ from the final corrected concentrations $\tilde{c}_{1,\rho}$ and $\tilde{c}_{2,\rho}$\;
solve the LS system associated with \eqref{PNPNS-ite4} to get $p_{\rho}$ and $\mathbf{u}_{\rho}$ from $\bar{\mathbf u}_{\rho}$, $\tilde{c}_{1,\rho}$, $\tilde{c}_{2,\rho}$ and $\phi_{\rho}$\;
\end{algorithm}

\begin{remark}\label{penalty}
Usually we multiply both sides of the equations that are corresponding to boundary and initial conditions by a positive constant $\lambda$, which is helpful to emphasize the boundary and initial conditions during training. In numerical experiments we choose $\lambda=100$. Please refer to \cite{Li2026} (Remark 5) for a detailed discussion.
\end{remark}

\begin{remark}
The gauge-fixed Poisson step in Algorithms~\ref{PNP-algorithm} and~\ref{PNPNS-algorithm} is implemented as a least-squares fit of the Poisson residual together with the gauge constraint. If the computed charge density is not exactly charge-neutral, this step should be interpreted as a gauge-fixed least-squares potential fit rather than as a compatible Neumann Poisson solve.
\end{remark}

\section{Theoretical Analysis}\label{theory}
This section provides a stepwise theoretical analysis of the proposed method. We first establish approximation and residual estimates for the raw RaNN subproblem solvers. We then study the local contraction of the raw Picard map
for the PNP system, analyze the stability of the structure-oriented correction steps, and finally discuss the divergence-free approximation and Oseen-type subproblem arising in the PNP--NS system.

The full method is organized through an outer Picard iteration. For the PNP system, we complement the subproblem estimates with a local-in-time contraction result for the corresponding raw Picard map. For the PNP-NS system, the analysis is restricted to the divergence-free approximation and the linearized Oseen-type subproblem, and is not intended as a full convergence theory for the coupled outer iteration.

\subsection{Approximation and residual estimates for the raw RaNN solvers}\label{raw-rann}
We begin with the raw neural-network outputs before any positivity cut-off, mass correction, or SAV update is applied. For each frozen-coefficient linearized subproblem, we introduce the corresponding residual operators,
establish a graph-norm equivalence, and derive residual-based estimates following \cite{Dang2024}.

The full numerical method is organized through an outer Picard iteration for the nonlinear coupled system. For the PNP system, after analyzing the frozen-coefficient linearized subproblems, we further establish in Section \ref{local-picard} a local-in-time convergence result for the corresponding raw outer Picard map at the PDE level. Thus, the results in Sections \ref{np-raw} and \ref{poisson-raw} should be interpreted as stepwise estimates for the raw subproblem solvers, while Section \ref{local-picard} provides the complementary local convergence statement for the raw outer iteration. By contrast, for the PNP-NS system, the present analysis remains local to the linearized subproblems and does not constitute a complete convergence theory for the coupled outer iteration.

\subsubsection{Linearized Nernst–Planck subproblem}\label{np-raw}
We first consider the linearized Nernst–Planck equation appearing in the decoupled iteration. Fix an iteration index $n$, let the electric potential and the auxiliary coefficients in the linearized equation be determined by the previous iterate. In this subsection, we analyze only the raw RaNN approximation for the linearized Nernst–Planck subproblem itself; the positivity cut-off, mass correction, and SAV-based post-processing are treated later in Section \ref{post-processing}. 

For the linearized equation \eqref{PNP-ite1} with boundary condition \eqref{PNP-ite-boundary} and initial condition \eqref{PNP-ite-ini1}, we fix iteration index $n$ and set $\psi:=\phi^n,\ u:=c_1^{n+1}$. $\psi$ is obtained from the Poisson equation at the previous iteration step, so we assume $\Omega \subset \mathbb{R}^d$ is bounded with $C^{1,1}$ boundary, and
\begin{equation}\label{assumption1}
    \psi \in C^{\alpha}([0,T];W^{1,\infty}(\Omega))\cap L^{\infty}(0,T;W^{2,\infty}(\Omega)),\ \alpha>\frac{1}{4}.
\end{equation}

Let $Q=\Omega \times I$ and $S=\partial \Omega \times I$. We introduce the solution space $V=\{u:u\in L^2(0,T;H^2(\Omega)),\ \partial_t u\in L^2(0,T;L^2(\Omega))\}$, the residual space $Y=L^2(0,T;L^2(\Omega))$, the initial-data space $Z=H^1(\Omega)$, and the boundary-data space $X=L^2(0,T;H^{\frac{1}{2}}(\partial \Omega))\cap H^{\frac{1}{4}}(0,T;L^2(\partial \Omega))$, and define the operators $\mathcal{G}$, $\mathcal{B}_0$, and $\mathcal{B}_1$ according to the linearized Nernst–Planck equation, the initial condition, and the boundary condition, respectively. Specifically,
$$\mathcal{G}u:=\partial_t u-\mathcal{A}_{\psi}u \in Y,\ \mathcal{B}_0u:=u(0)\in Z,\ \mathcal{B}_1u:=(\partial_{\bf n}u+z_1u\partial_\mathbf{n}\psi)|_{\partial \Omega}\in X,$$ where $\mathcal{A}_{\psi}u:=D_1(\Delta u+z_1(\nabla u\cdot \nabla \psi+u\Delta \psi))$. The boundary condition is written as $\mathcal{B}_1u=g$. Under assumption \eqref{assumption1}, by Sobolev embedding and product estimates
\begin{equation}\label{A-bound}
    \Vert \mathcal{A}_{\psi}u\Vert_{L^2(\Omega)}\leq C_{\psi}\Vert u\Vert_{H^2(\Omega)},
\end{equation}
where $C_{\psi}$ depends on $D_1,\ |z_1|,\ \Vert \psi \Vert_{W^{2,\infty}},\ \Omega$.

The key step is to show that this residual-based loss is equivalent to the natural norm on $V$. Once such a graph-norm equivalence is established, it provides the continuous residual--error link for the ideal loss. For the implemented pointwise collocation loss, an additional discrete norming assumption is needed, as stated below.

\textbf{Graph norm equivalence}. We have a standard consequence of the Lions-Magenes theory (\cite{Lions1972}) applied to the triple $H^2(\Omega)\hookrightarrow H^1(\Omega) \hookrightarrow L^2(\Omega)$ as follows:

\begin{lemma}[Time trace into $H^1$]\label{trace-lemma-1}
    If $u\in L^2(0,T;H^2(\Omega))$ and $\partial_t u\in L^2(0,T;L^2(\Omega))$, then $u$ admits a representative in $C([0,T];H^1(\Omega))$. Moreover, there exists $C^0_{\rm tr}(T,\Omega)>0$ such that
    \begin{equation}\label{time-trace-bound}
        \mathop {\rm sup}\limits_{t\in [0,T]} \Vert u(t) \Vert^2_{H^1(\Omega)}\leq C^0_{\rm tr}\left( \Vert u \Vert^2_{L^2(H^2)} + \Vert \partial_t u \Vert^2_{L^2(L^2)} \right)=C^0_{\rm tr}\Vert u \Vert^2_V.
    \end{equation}
In particular, $\Vert \mathcal{B}_0 u \Vert_Z\le \sqrt{C^0_{\rm tr}}\Vert u \Vert_V$.
\end{lemma}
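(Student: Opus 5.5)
The plan is to apply the Lions--Magenes intermediate-derivative theorem to the triple $H^2(\Omega)\hookrightarrow H^1(\Omega)\hookrightarrow L^2(\Omega)$, where $H^1(\Omega)=[H^2(\Omega),L^2(\Omega)]_{1/2}$ is the complex interpolation space at the midpoint. Since $\Omega$ is bounded with $C^{1,1}$ boundary, the identification $[H^2,L^2]_{1/2}=H^1$ holds with equivalent norms. This follows, for instance, from the existence of a bounded extension operator $E:H^s(\Omega)\to H^s(\mathbb{R}^d)$ that is simultaneously bounded for $s=0,1,2$; such an operator exists for $C^{1,1}$ (even Lipschitz) domains by Stein's construction.

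\textbf{Step 1 (continuity).} Assume first that $u\in C^1([0,T];H^2(\Omega))$, which is dense in $V$. This density is obtained by extending $u$ in time by reflection across $t=0$ and $t=T$ and then mollifying in $t$; both operations are bounded in the $V$-norm. For such smooth $u$ we compute
\[
\frac{d}{dt}\Vert u(t)\Vert_{H^1}^2=2(\partial_t u,u)_{L^2}+2(\nabla\partial_t u,\nabla u)_{L^2}.
\]
To avoid needing $\nabla\partial_t u$, we work in the interpolation norm instead of the raw $H^1$ inner product. Concretely, take $\Lambda=(I-\Delta_N)^{1/2}$ with the Neumann Laplacian, whose domain norm of $\Lambda^2$ is equivalent to $\Vert\cdot\Vert_{H^2}$ on $\{\partial_{\mathbf n}u=0\}$. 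Alternatively, and more simply, compose with the extension $E$ and argue on $\mathbb{R}^d$ via Fourier multipliers. With $\Vert u\Vert_{H^1}^2\simeq(\Lambda^2u,u)$ on $\mathbb{R}^d$, we get
\[
\frac{d}{dt}(\Lambda^2u,u)=2(\Lambda^2u,\partial_tu),
\]
so that $|\tfrac{d}{dt}\Vert u\Vert_{H^1}^2|\le C\Vert u\Vert_{H^2}\Vert\partial_tu\Vert_{L^2}$.

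\textbf{Step 2 (the bound).} Integrating from $s$ to $t$ gives
\[
\Vert u(t)\Vert_{H^1}^2\le \Vert u(s)\Vert_{H^1}^2+C\Vert u\Vert_{L^2(H^2)}\Vert\partial_tu\Vert_{L^2(L^2)}.
\]
We then average over $s\in[0,T]$ and use $\Vert u(s)\Vert_{H^1}\le C\Vert u(s)\Vert_{H^2}$, which yields
\[
\sup_t\Vert u(t)\Vert_{H^1}^2\le \big(C/T+C\big)\Vert u\Vert_V^2 .
\]
The bound therefore holds with $C^0_{\rm tr}$ depending on $T$ and $\Omega$ through the extension and equivalence constants. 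Next, apply the same estimate to the differences $u_k-u_l$ of a smooth approximating sequence. These differences form a Cauchy sequence in $C([0,T];H^1)$, so the limit is a continuous representative of $u$, and the bound passes to the limit. Finally, evaluating at $t=0$ gives $\Vert\mathcal B_0u\Vert_Z\le\sqrt{C^0_{\rm tr}}\Vert u\Vert_V$.

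\textbf{Main obstacle.} The main difficulty is the justification in Step 1 that the $H^1$ norm is realized by a self-adjoint structure. This is what makes $\tfrac{d}{dt}\Vert u\Vert_{H^1}^2$ controllable by $\Vert u\Vert_{H^2}\Vert\partial_tu\Vert_{L^2}$ without any boundary terms. I would handle it via the extension operator to $\mathbb{R}^d$, where $\Lambda=(I-\Delta)^{1/2}$ is an exact Fourier multiplier. I would also check that extension commutes with $\partial_t$, which it does because $E$ is linear and time-independent. Alternatively, one can simply cite Lions--Magenes, Vol.~I, Thm.~3.1, Ch.~1, which states exactly $W(0,T)\hookrightarrow C([0,T];[X,Y]_{1/2})$.
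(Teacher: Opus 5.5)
Your proposal is correct, but it takes a different route from the paper. The paper gives no argument beyond invoking the Lions--Magenes intermediate-derivative theorem for the triple $H^2(\Omega)\hookrightarrow H^1(\Omega)\hookrightarrow L^2(\Omega)$; that is only your closing fallback, whereas your Steps 1--2 prove this special case directly.

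The citation route tacitly needs the identification $[H^2(\Omega),L^2(\Omega)]_{1/2}=H^1(\Omega)$, which you correctly justify through a universal (Stein-type) extension operator. Your direct route does not need that identification. It uses only the simultaneous boundedness of $E$ on $L^2$, $H^1$, $H^2$ and the boundary-free identity $\frac{d}{dt}\Vert Eu\Vert_{H^1(\mathbb{R}^d)}^2=2((I-\Delta)Eu,E\partial_t u)$. It also makes the constant explicit, $C^0_{\rm tr}\approx C_E^2/T+C$. The factor $1/T$ cannot be removed, as the test function $u\equiv 1$ shows.

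Two small corrections. First, drop the Neumann-Laplacian variant. A general $u\in V$ has $\partial_{\mathbf n}u\neq 0$, so $u(t)\notin D(\Delta_N)$ and the boundary term $\int_{\partial\Omega}\partial_t u\,\partial_{\mathbf n}u$ reappears. Second, in Step 2 the inequality you integrate is really for $\Vert Eu(t)\Vert_{H^1(\mathbb{R}^d)}^2$. So it should read $\Vert u(t)\Vert_{H^1(\Omega)}^2\le C_E^2\Vert u(s)\Vert_{H^1(\Omega)}^2+C\Vert u\Vert_{L^2(H^2)}\Vert\partial_t u\Vert_{L^2(L^2)}$ before you average over $s$. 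This is harmless for the conclusion, but it should be written that way.
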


Since the potential \(\psi=\phi^n\) is fixed during the \((n+1)\)-th linearized Nernst--Planck solve, the coefficients \(\nabla\psi\), \(\Delta\psi\), and \(\partial_n\psi\) are treated as given frozen coefficients. The following regularity estimate is therefore formulated for this frozen-coefficient Robin problem.

\begin{lemma}[Frozen-coefficient Robin regularity]
\label{robin-regularity-lemma}
Assume that $\Omega\subset\mathbb R^d$ is a bounded $C^{1,1}$ domain and that
\[
\psi\in C^\alpha([0,T];W^{1,\infty}(\Omega))
\cap L^\infty(0,T;W^{2,\infty}(\Omega)),\qquad \alpha>\frac14 .
\]
Set
\[
\beta_\psi=z_1\partial_{\mathbf n}\psi,\qquad
b_\psi=-D_1z_1\nabla\psi,\qquad
c_\psi=-D_1z_1\Delta\psi .
\]
Assume that the Robin coefficient $\beta_\psi$ satisfies the regularity hypotheses for non-autonomous Robin boundary conditions in \cite{Arendt2016}, and that the data
\[
f\in L^2(0,T;L^2(\Omega)),\qquad
g\in X,\qquad
u_0\in H^1(\Omega)
\]
satisfy the usual initial-boundary compatibility condition. Then the solution of
\[
\partial_tu-D_1\Delta u+b_\psi\cdot\nabla u+c_\psi u=f,
\]
\[
\partial_{\mathbf n}u+\beta_\psi u=g,\qquad u(0)=u_0,
\]
satisfies
\[
\|u\|_{H^1(0,T;L^2(\Omega))}
+
\|u\|_{L^2(0,T;H^2(\Omega))}
\le
C_\psi
\left(
\|f\|_{L^2(0,T;L^2(\Omega))}
+\|g\|_X
+\|u_0\|_{H^1(\Omega)}
\right).
\]
Here $C_\psi$ depends on $\Omega,T,D_1,z_1$ and the corresponding norms of $\psi$ and $\beta_\psi$.
\end{lemma}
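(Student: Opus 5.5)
The plan is to reduce the lemma to the maximal $L^2$-regularity theory for non-autonomous Robin problems of Arendt et al.\ \cite{Arendt2016}, treating the lower-order terms $b_\psi\cdot\nabla u+c_\psi u$ as a perturbation. We proceed in three steps: lift the boundary data, solve the principal problem with maximal regularity, and absorb the lower-order terms by a Gronwall/short-time argument.

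\textbf{Step 1: lift the inhomogeneous boundary datum.} Since $g\in X=L^2(0,T;H^{1/2}(\partial\Omega))\cap H^{1/4}(0,T;L^2(\partial\Omega))$, the standard anisotropic trace theory of Lions--Magenes \cite{Lions1972} on the $C^{1,1}$ domain $\Omega$ provides a right inverse of the Neumann trace. This gives $w\in V$ with $\partial_{\mathbf n}w=g$ and $\|w\|_V\le C\|g\|_X$. We may also arrange $w(0)$ so that the compatibility condition at $t=0$ is preserved.

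We then set $v=u-w$. It solves the same equation with right-hand side $\tilde f=f-\partial_tw+D_1\Delta w-b_\psi\cdot\nabla w-c_\psi w$ and boundary condition $\partial_{\mathbf n}v+\beta_\psi v=-\beta_\psi w$. The term $\beta_\psi w$ still lies in $X$. This follows from the product estimate using $\beta_\psi\in C^\alpha([0,T];L^\infty(\partial\Omega))\cap L^\infty(0,T;W^{1,\infty})$, which is available since $\psi\in C^\alpha(W^{1,\infty})\cap L^\infty(W^{2,\infty})$. The condition $\alpha>1/4$ is exactly what makes multiplication by $\beta_\psi$ bounded on $H^{1/4}(0,T;L^2(\partial\Omega))$.

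Rather than lifting twice, it is cleaner to invoke \cite{Arendt2016} directly with inhomogeneous Robin data in $X$. Their result states that for $\beta$ Hölder of order greater than $1/4$ in time, the non-autonomous form operator $\mathcal{A}(t)$ associated with $D_1\int\nabla u\cdot\nabla v+D_1\int_{\partial\Omega}\beta(t)uv$ has maximal regularity in $L^2(\Omega)$ with initial data in the trace space $H^1(\Omega)$. Combined with the elliptic $H^2$-regularity of the Robin Laplacian on $C^{1,1}$ domains, uniform in $t$, this gives
\[
\|u_1\|_{H^1(L^2)}+\|u_1\|_{L^2(H^2)}\le C\bigl(\|F\|_{L^2(L^2)}+\|g\|_X+\|u_0\|_{H^1}\bigr)
\]
for the principal problem $\partial_tu_1-D_1\Delta u_1=F$, $\partial_{\mathbf n}u_1+\beta_\psi u_1=g$.

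\textbf{Step 2: absorb the lower-order terms.} Write $F=f-b_\psi\cdot\nabla u-c_\psi u$. Then
\[
\|F\|_{L^2(0,t;L^2)}\le\|f\|+C(\|\psi\|_{L^\infty W^{2,\infty}})\|u\|_{L^2(0,t;H^1)}.
\]
To close the estimate we first obtain existence, either by a fixed-point argument on a short interval $[0,\tau]$ or by the method of continuity in the lower-order coefficients. We then bound $\|u\|_{L^2(0,t;H^1)}$ by interpolation,
\[
\|u\|_{H^1}\le\varepsilon\|u\|_{H^2}+C_\varepsilon\|u\|_{L^2},
\]
together with the basic energy estimate. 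The energy estimate comes from testing with $u$, using the Robin trace term $\int_{\partial\Omega}\beta u^2\le\varepsilon\|\nabla u\|^2+C_\varepsilon\|u\|^2$, followed by Gronwall. This yields
\[
\sup_t\|u\|_{L^2}^2+\|u\|^2_{L^2(H^1)}\le C(\|f\|^2+\|g\|^2_{L^2(L^2(\partial\Omega))}+\|u_0\|^2).
\]
Choosing $\varepsilon$ small absorbs the $H^2$ contribution into the left side and gives the stated bound with $C_\psi$ depending on $\Omega,T,D_1,z_1$ and the norms of $\psi,\beta_\psi$.

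\textbf{Main obstacle.} The delicate point is Step 1: the time regularity of the Robin coefficient. One must check that the hypotheses of \cite{Arendt2016} (Hölder continuity of order above $1/4$ of the boundary form, and the compatibility $\partial_{\mathbf n}u_0+\beta_\psi(0)u_0=g(0)$ interpreted suitably in $H^{1/2}$-type trace spaces) are met. One must also check that the maximal regularity estimate there upgrades from the form-domain setting to full $L^2(H^2)$ via uniform elliptic Robin regularity. The lemma explicitly assumes these hypotheses, so the proof amounts mainly to verifying the product and trace estimates and performing the perturbative absorption.
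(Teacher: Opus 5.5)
Your overall strategy is the one the paper relies on implicitly. The paper gives no proof of this lemma and simply imports the hypotheses of \cite{Arendt2016}. Two of your added ingredients are sound. First, the upgrade from the form-domain solution to $L^2(0,T;H^2(\Omega))$ via elliptic Robin regularity on a $C^{1,1}$ domain works: since $\beta_\psi\in L^\infty(0,T;W^{1,\infty}(\partial\Omega))$, one has $\beta_\psi u\in H^{1/2}(\partial\Omega)$ uniformly in $t$. Second, the absorption of $b_\psi\cdot\nabla u+c_\psi u$ through the energy estimate and Gronwall is fine.

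The gap is in how you treat the boundary datum $g$. The maximal-regularity result of \cite{Arendt2016} concerns the homogeneous condition $\partial_{\mathbf n}u+\beta(t)u=0$, which is encoded in the form. An inhomogeneous $g\in L^2(\partial\Omega)$ enters only as a functional in $(H^1(\Omega))'$, namely $v\mapsto D_1\int_{\partial\Omega}gv$, not as an $L^2(\Omega)$ right-hand side. So $L^2$-maximal regularity of the form operator says nothing about it, and your displayed estimate for $\partial_{\mathbf n}u_1+\beta_\psi u_1=g$ does not follow from that reference.

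Your fallback of lifting with $\partial_{\mathbf n}w=g$ does not close either. It leaves the Robin datum $-\beta_\psi w\in X$, which is of exactly the same type as $g$. Lifting again only regenerates a term $-\beta_\psi w_2$, and there is no smallness to sum the iteration.

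The fix is to use the joint anisotropic trace theorem for $H^{2,1}(Q)=L^2(0,T;H^2(\Omega))\cap H^1(0,T;L^2(\Omega))$. Choose $w$ with $w|_S=0$, $\partial_{\mathbf n}w|_S=g$, $w(0)=0$, and $\|w\|_V\le C\|g\|_X$. The only compatibility condition at this regularity, $w(0)|_{\partial\Omega}=(w|_S)(0)$, holds trivially. Then $\partial_{\mathbf n}w+\beta_\psi w=g$ exactly. Consequently $v=u-w$ solves a \emph{homogeneous} Robin problem with right-hand side in $L^2(0,T;L^2(\Omega))$ and $v(0)=u_0\in H^1(\Omega)$, and \cite{Arendt2016} applies to it.

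Relatedly, your ``main obstacle'' paragraph is aimed at the wrong point. Since $g\in H^{1/4}(0,T;L^2(\partial\Omega))$ has no trace at $t=0$, there is no condition $\partial_{\mathbf n}u_0+\beta_\psi(0)u_0=g(0)$ at this regularity level. The order $\alpha>1/4$ enters through the hypotheses of the maximal-regularity theorem itself. Your multiplier estimate on $H^{1/4}(0,T;L^2(\partial\Omega))$ is correct, but with the lifting above it is what Lemma~\ref{trace-lemma-2} needs rather than this lemma.
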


\begin{lemma}[Boundedness of $\mathcal{B}_1$]\label{trace-lemma-2}
Assume that the Robin coefficient
\[
\beta_\psi=z_1\partial_{\mathbf n}\psi
\]
has the regularity required in Lemma~\ref{robin-regularity-lemma}. Then there exists a constant $C_\psi>0$ such that, for all $u\in V$,
\[
\|\mathcal B_1u\|_X
=
\|\partial_{\mathbf n}u+\beta_\psi u\|_X
\le C_\psi\|u\|_V .
\]
The constant $C_\psi$ depends on $\Omega$, $T$, $z_1$, and the corresponding norms of $\beta_\psi$. The required trace multiplier property is included in the admissibility assumptions on the Robin coefficient $\beta_\psi$.
\end{lemma}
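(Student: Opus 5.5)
We split $\mathcal B_1u=\partial_{\mathbf n}u+\beta_\psi u$ into the normal-derivative term and the Robin multiplier term. We then bound each in the two components of $X=L^2(0,T;H^{1/2}(\partial\Omega))\cap H^{1/4}(0,T;L^2(\partial\Omega))$. By the triangle inequality it suffices to show
\[
\|\partial_{\mathbf n}u\|_X\le C\|u\|_V,\qquad \|\beta_\psi u|_{\partial\Omega}\|_X\le C_\psi\|u\|_V .
\]

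For the first term, the spatial part is the standard trace theorem on the $C^{1,1}$ domain $\Omega$. For a.e. $t$, $\|\partial_{\mathbf n}u(t)\|_{H^{1/2}(\partial\Omega)}\le C\|u(t)\|_{H^2(\Omega)}$, and integrating in time gives the $L^2(0,T;H^{1/2}(\partial\Omega))$ bound by $\|u\|_{L^2(H^2)}$.

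The temporal part $H^{1/4}(0,T;L^2(\partial\Omega))$ is where the anisotropic structure enters. I would use the Lions--Magenes anisotropic trace theorem for $V=L^2(0,T;H^2(\Omega))\cap H^1(0,T;L^2(\Omega))=H^{2,1}(Q)$. It states that the normal derivative trace maps $H^{2,1}(Q)$ continuously into $H^{1/2,1/4}(S)=L^2(0,T;H^{1/2}(\partial\Omega))\cap H^{1/4}(0,T;L^2(\partial\Omega))$, which is exactly $X$. Alternatively, one interpolates: $u\in H^{\theta}(0,T;H^{2(1-\theta)}(\Omega))$ by the intermediate derivative theorem, and taking $\theta=1/4$ with $s=3/2$ gives $H^{1/4}(0,T;H^{3/2}(\Omega))$. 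The normal trace then maps $H^{3/2}(\Omega)$ into $L^2(\partial\Omega)$, up to the usual endpoint caveat, which the Lions--Magenes statement avoids. Citing \cite{Lions1972} for the $H^{2,1}\to H^{1/2,1/4}$ trace is the cleanest route, and I would rely on it.

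For the second term, the Dirichlet trace $u|_S$ lies in $H^{3/2,3/4}(S)$, again by Lions--Magenes, with norm bounded by $\|u\|_V$, and in particular in $X$. The remaining task is that multiplication by $\beta_\psi$ is bounded on $X$. This is precisely what the statement places in the admissibility assumptions on $\beta_\psi$.

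To make this concrete, I would check the two components separately. Since $\psi\in L^\infty(W^{2,\infty})$, we have $\beta_\psi\in L^\infty(0,T;W^{1,\infty}(\partial\Omega))$, which is a multiplier on $H^{1/2}(\partial\Omega)$ for a.e. $t$. For the $H^{1/4}$ part, I would use the Sobolev--Slobodeckij seminorm and split
\[
\beta(t)u(t)-\beta(s)u(s)=\beta(t)\bigl(u(t)-u(s)\bigr)+\bigl(\beta(t)-\beta(s)\bigr)u(s).
\]
The first piece is controlled by $\|\beta\|_{L^\infty}$ times the $H^{1/4}$ seminorm of $u|_S$. The second uses the H\"older bound $|\beta(t)-\beta(s)|\le C|t-s|^\alpha$ with $\alpha>1/4$, which makes $\int\!\!\int|t-s|^{2\alpha-3/2}\,ds\,dt$ finite. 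The remaining factor $\sup_s\|u(s)\|_{L^2(\partial\Omega)}$ is bounded by Lemma~\ref{trace-lemma-1}, since $\sup_t\|u(t)\|_{H^1(\Omega)}\le \sqrt{C^0_{\rm tr}}\|u\|_V$, combined with the spatial trace $H^1(\Omega)\to L^2(\partial\Omega)$. This step explains the hypothesis $\alpha>1/4$ in \eqref{assumption1}.

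I expect the main obstacle to be this $H^{1/4}$-in-time multiplier estimate, together with correctly invoking the anisotropic trace theorem for the normal derivative. Everything else is routine, and the constants collect into $C_\psi$ depending on $\Omega$, $T$, $z_1$ and the norms of $\beta_\psi$.
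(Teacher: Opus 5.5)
Your proof is correct. The paper gives no written proof of this lemma. It implicitly relies on the Lions--Magenes trace theory and writes the needed multiplier property of $\beta_\psi$ into the hypotheses. Your skeleton is exactly what the statement presupposes: the triangle inequality, the anisotropic trace $H^{2,1}(Q)\to H^{1/2,1/4}(S)=X$ for $\partial_{\mathbf n}u$, the Dirichlet trace into $H^{3/2,3/4}(S)\subset X$, and a multiplier bound for $\beta_\psi$. You go beyond the paper by proving the multiplier bound from \eqref{assumption1} instead of assuming it. Since $\psi(t)\in W^{2,\infty}(\Omega)$ and $\mathbf n$ is Lipschitz on a $C^{1,1}$ boundary, $\beta_\psi(t)$ is Lipschitz on $\partial\Omega$, which handles the $L^2(0,T;H^{1/2}(\partial\Omega))$ component. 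Your Slobodeckij splitting then produces the kernel $|t-s|^{2\alpha-3/2}$, which is integrable exactly when $\alpha>1/4$. This shows two things. First, the separate ``admissibility'' clause in the lemma is implied by \eqref{assumption1} on a $C^{1,1}$ domain; it remains relevant only for the Arendt--Monniaux regularity in Lemma~\ref{robin-regularity-lemma}. Second, the threshold $\alpha>1/4$ in \eqref{assumption1} is precisely what the trace-multiplier estimate needs.

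A few small refinements:
\begin{itemize}
\item The boundary H\"older bound $\|\beta_\psi(t)-\beta_\psi(s)\|_{L^\infty(\partial\Omega)}\le C|t-s|^\alpha$ uses both halves of \eqref{assumption1}. The $W^{2,\infty}$ bound makes $\nabla\psi(t)-\nabla\psi(s)$ continuous up to $\partial\Omega$, so its boundary supremum is dominated by its $L^\infty(\Omega)$ norm. That norm is then controlled by the $C^\alpha(W^{1,\infty})$ seminorm.
\item You do not need Lemma~\ref{trace-lemma-1} for the second piece. Since $\sup_s\int_0^T|t-s|^{2\alpha-3/2}\,dt\le CT^{2\alpha-1/2}$, Fubini bounds that piece by $C\|f\|_{L^2(0,T;L^2(\partial\Omega))}^2$ for any $f$. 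So $\beta_\psi$ is a genuine multiplier on all of $X$, as you first asserted; the $L^2$ part of the $H^{1/4}$ norm is trivial.
\item The trace theorem for the anisotropic spaces $H^{r,s}(Q)$ is in Volume II of Lions--Magenes, not the Volume I listed as \cite{Lions1972}, and it is stated for smooth boundaries. On a $C^{1,1}$ domain it is cleanest to apply it to $\nabla u\in H^{1,1/2}(Q)$, which survives the local boundary flattening, and then multiply by the time-independent Lipschitz field $\mathbf n$.
\end{itemize}
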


\begin{remark}
In the implemented RaNN solver, the frozen potential $\psi$ is the raw RaNN output from the previous Picard step. If a smooth activation function is used, then $\psi$ is a finite linear combination of smooth functions on the bounded space-time domain. Consequently, the coefficient regularity conditions imposed above are naturally satisfied for each fixed RaNN approximation. These assumptions are stated only to make the continuous frozen-coefficient parabolic regularity estimate applicable. The constants in the estimate may depend on the corresponding network coefficients.
\end{remark}

\begin{theorem}[Graph norm equivalence for NP equation]\label{graph-norm-np-thm}
    Assume \eqref{assumption1} and the hypotheses of Lemma~\ref{robin-regularity-lemma} for the frozen potential $\psi$ under consideration. Then there exist constants $C_L,\ C_U>0$ such that for all $u\in V$,
    \begin{equation}\label{graph-norm}
        C_L \Vert u \Vert^2_V \leq \Vert \mathcal{G}u \Vert^2_Y + \Vert \mathcal{B}_0u \Vert^2_Z + \Vert \mathcal{B}_1u \Vert^2_X  \leq C_U \Vert u \Vert^2_V,
    \end{equation}
    where $C_L$ and $C_U$ depend on $D_1$, $|z_1|$, $\Omega$, $T$, the norms of the frozen potential $\psi$ appearing in \eqref{assumption1}, and the regularity constant in Lemma~\ref{robin-regularity-lemma}.
\end{theorem}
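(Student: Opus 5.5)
The two inequalities in \eqref{graph-norm} are proved separately: the upper bound is continuity of the three residual operators on $V$, and the lower bound is the well-posedness estimate of Lemma~\ref{robin-regularity-lemma} applied with data built from $u$ itself.

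\emph{Upper bound.} Take $u\in V$. Using the triangle inequality and then \eqref{A-bound} pointwise in time and integrating over $(0,T)$,
\[
\|\mathcal G u\|_Y\le \|\partial_t u\|_{L^2(L^2)}+\|\mathcal A_\psi u\|_{L^2(L^2)}\le \|\partial_t u\|_{L^2(L^2)}+C_\psi\|u\|_{L^2(H^2)}.
\]
Here $C_\psi$ is uniform in $t$ because $\psi\in L^\infty(0,T;W^{2,\infty})$ by \eqref{assumption1}. Squaring gives $\|\mathcal G u\|_Y^2\le 2\max(1,C_\psi^2)\|u\|_V^2$. Lemma~\ref{trace-lemma-1} gives $\|\mathcal B_0u\|_Z^2\le C^0_{\rm tr}\|u\|_V^2$, and Lemma~\ref{trace-lemma-2} gives $\|\mathcal B_1u\|_X^2\le C_\psi^2\|u\|_V^2$. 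Adding the three bounds yields the right inequality with $C_U=2\max(1,C_\psi^2)+C^0_{\rm tr}+C_\psi^2$.

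\emph{Lower bound.} Given $u\in V$, define
\[
f:=\mathcal G u\in Y,\qquad g:=\mathcal B_1u\in X,\qquad u_0:=\mathcal B_0u\in Z=H^1(\Omega).
\]
These memberships follow from the upper-bound step. Expanding $\mathcal A_\psi$ gives
\[
\mathcal G u=\partial_t u-D_1\Delta u+b_\psi\cdot\nabla u+c_\psi u,
\]
with $b_\psi,c_\psi$ exactly as in Lemma~\ref{robin-regularity-lemma}. So $u$ solves the frozen-coefficient Robin problem of that lemma with data $(f,g,u_0)$.

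The compatibility condition between $u_0$ and $g$ holds automatically, because both data are traces of the same function $u\in V$. By the uniqueness part of the well-posedness theory (\cite{Arendt2016}), $u$ is the solution covered by the lemma. Its estimate then gives
\[
\|u\|_V\le \|u\|_{H^1(L^2)}+\|u\|_{L^2(H^2)}\le C_\psi\big(\|f\|_Y+\|g\|_X+\|u_0\|_Z\big).
\]
Squaring and using $(a+b+c)^2\le 3(a^2+b^2+c^2)$ gives the left inequality with $C_L=(3C_\psi^2)^{-1}$.

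\emph{Main obstacle.} The only delicate point is making sure $u$ is the solution to which Lemma~\ref{robin-regularity-lemma} applies. This requires uniqueness in $V$ for the non-autonomous Robin problem; I would obtain it from the energy estimate for the difference of two solutions. It also requires that the compatibility condition be satisfied by data arising as traces of an element of $V$. I would state both as part of the admissibility hypotheses inherited from Lemma~\ref{robin-regularity-lemma}. The dependence of $C_L$ and $C_U$ on $D_1$, $|z_1|$, $\Omega$, $T$ and the norms of $\psi$ is then read off from the constants above.
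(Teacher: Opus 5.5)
Your proposal is correct and follows the paper's proof essentially step for step: the upper bound comes from the triangle inequality with \eqref{A-bound} together with Lemmas~\ref{trace-lemma-1} and~\ref{trace-lemma-2}, and the lower bound comes from applying Lemma~\ref{robin-regularity-lemma} to the data $(\mathcal Gu,\mathcal B_1u,\mathcal B_0u)$ generated by $u$ itself. Your explicit constants, and your remark that uniqueness in $V$ is needed so that $u$ is \emph{the} solution the lemma estimates, make precise a point the paper passes over when it simply says that $u$ is the solution of the frozen-coefficient Robin problem.
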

\begin{proof}
    \textbf{Upper bound}. By triangle inequality and \eqref{A-bound},
    $$ \Vert \mathcal{G}u \Vert_Y \leq \Vert \partial_t u\Vert_Y + \Vert \mathcal{A}_{\psi}u \Vert_Y \leq \Vert \partial_t u\Vert_Y + C_{\psi}\Vert u \Vert_V. $$
    Thus 
    $$\Vert \mathcal{G}u \Vert^2_Y\leq 2\Vert \partial_t u\Vert_Y^2 + 2(C_\psi)^2\Vert u\Vert_V^2 \le C \Vert u\Vert_V^2.$$
    Moreover, combining Lemmas \ref{trace-lemma-1} and \ref{trace-lemma-2} yields the right inequality in \eqref{graph-norm}.\\
    \textbf{Lower bound}. set
    \[
    f=\mathcal Gu,\qquad g=\mathcal B_1u,\qquad u_0=\mathcal B_0u .
    \]
    Since these data are induced by the same function $u\in V$, the required initial-boundary compatibility condition in Lemma~\ref{robin-regularity-lemma} is satisfied in the trace sense. Then \(u\) is the solution of the frozen-coefficient Robin problem covered by Lemma~\ref{robin-regularity-lemma}. Hence
    \[
    \|u\|_{H^1(0,T;L^2)}+\|u\|_{L^2(0,T;H^2)}
    \le C_\psi
    \left(
    \|\mathcal Gu\|_Y+\|\mathcal B_1u\|_X+\|\mathcal B_0u\|_Z
    \right).
    \]
    This gives the lower inequality in \eqref{graph-norm}.
\end{proof}

\textbf{Ideal residual estimate and practical collocation loss.} Denote by $u^*\in V$ the exact solution of
\[
\mathcal Gu^*=f,\qquad \mathcal B_0u^*=u_0,\qquad \mathcal B_1u^*=g.
\]
For $u\in V$, define the continuous graph-norm residual loss
\begin{equation}\label{loss}
\mathcal L(u):=\|\mathcal Gu-f\|_Y^2+\|\mathcal B_0u-u_0\|_Z^2+\|\mathcal B_1u-g\|_X^2.
\end{equation}
By Theorem~\ref{graph-norm-np-thm}, an ideal residual minimizer associated with $\mathcal L$ satisfies a graph-norm error estimate. In the implementation, however, $\mathcal L$ is replaced by a pointwise collocation loss. Since the graph-norm loss contains Sobolev trace norms, pointwise sampling is not by itself a consistent discretization of the full continuous graph-norm loss. Therefore, a graph-norm estimate for the implemented solver is stated below only under an explicit discrete norming assumption on the chosen collocation set.

\textbf{Approximation error}.
Theorem 4.12 in \cite{Dang2024} provides a universal approximation result for randomized shallow neural networks in $H^s(D)$. Choosing $s$ such that $H^s(Q)\hookrightarrow V$ continuously, we obtain the following approximation result in the $V$-norm.

\begin{assumption}[Activation admissibility]\label{activation function}
    The activation function $\rho$ satisfies:\\
    (A1) $\rho\in C^s(\mathbb{R})$, and $\Vert \rho^{(s_0)}\Vert_{L^{\infty}(\mathbb{R})}<\infty$ for all $s_0=0,1,\ldots,s$.\\
    (A2) $\widehat{\rho'}\neq 0,\ \rho^{(s_0)}\in L^1(\mathbb R),\ s_0=1,\ldots,s+1.$
\end{assumption}
\begin{theorem}\label{approximation-np-thm}
    Assume that the activation function $\rho$ satisfies Assumption \ref{activation function}, and that the exact solution $u^*$ belongs to $H^{s+\epsilon_C}(Q)$ for some $\epsilon_C>0$, where $s$ is chosen such that $H^s(Q)\hookrightarrow V$ continuously. Then, for any $\epsilon_A>0$, there exists a randomized shallow neural network
    $u_a\in \mathcal{N}_\rho$ such that
    $$\Vert u^*-u_a\Vert_V<\epsilon_A$$
    with probability at least $1-\delta_p$, provided that the number of neurons is sufficiently large.
\end{theorem}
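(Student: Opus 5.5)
The plan is to reduce the statement to the universal approximation result for randomized shallow networks, Theorem 4.12 in \cite{Dang2024}, applied on the space-time cylinder $Q=\Omega\times I\subset\mathbb R^{d+1}$, and then to transfer the estimate from $H^s(Q)$ to $V$ through the assumed continuous embedding.

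\textbf{Step 1: apply the $H^s$ approximation result.} Since $\Omega$ is a bounded $C^{1,1}$ domain, $Q$ is a bounded Lipschitz domain in $\mathbb R^{d+1}$. This is the setting in which \cite{Dang2024} is formulated, possibly after a Stein-type extension of $u^*\in H^{s+\epsilon_C}(Q)$ to all of $\mathbb R^{d+1}$ with a bounded extension operator. Assumption~\ref{activation function} is exactly the activation hypothesis (A1)--(A2) required there. Theorem 4.12 therefore gives the following: for every $\epsilon>0$ and $\delta_p\in(0,1)$ there is a width $m_0$ such that, for $m\ge m_0$ and hidden parameters sampled from the prescribed distribution, with probability at least $1-\delta_p$ some output-layer coefficient vector yields $u_a=\sum_k\alpha_k\psi_k\in\mathcal N_\rho$ with $\|u^*-u_a\|_{H^s(Q)}<\epsilon$. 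The extra smoothness $\epsilon_C>0$ is what makes the Monte Carlo / Barron-type remainder in that theorem finite, so no further regularity is needed.

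\textbf{Step 2: transfer to the $V$-norm.} By hypothesis, $H^s(Q)\hookrightarrow V$ with constant $C_E$; for instance $s=2$ suffices, since $\|u\|_{L^2(H^2)}+\|\partial_tu\|_{L^2(L^2)}\le C\|u\|_{H^2(Q)}$. It follows that $\|u^*-u_a\|_V\le C_E\|u^*-u_a\|_{H^s(Q)}$. Taking $\epsilon=\epsilon_A/C_E$ in Step 1 gives $\|u^*-u_a\|_V<\epsilon_A$ on the same event of probability at least $1-\delta_p$. Note also that $u_a\in V$ whenever $\rho\in C^s$ with $s\ge2$, because $u_a$ is a finite sum of smooth bounded functions on bounded $Q$. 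So the $V$-norm of $u_a$ is meaningful, as is the application of Theorem~\ref{graph-norm-np-thm} afterwards.

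\textbf{Main obstacle.} The difficult part is Step 1: checking that the hypotheses of \cite{Dang2024} are matched exactly. This involves the choice of sampling distribution for the hidden weights and biases, the domain and extension issue, and the fact that their estimate is probabilistic in the random features while the output weights are chosen optimally. I would first state explicitly that $\mathcal N_\rho$ denotes the span of the sampled features and that $u_a$ is the best approximant in that span. Then I would verify that the bounded-derivative and $L^1$ conditions in (A1)--(A2) up to order $s+1$ give the $H^s$ bounds used in their proof. Everything after that is the elementary embedding argument of Step 2.
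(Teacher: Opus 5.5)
Your proposal is correct and takes essentially the same approach as the paper: invoke \cite[Theorem 4.12]{Dang2024} to obtain $u_a\in\mathcal N_\rho$ with $\|u^*-u_a\|_{H^s(Q)}<\epsilon_A/C_{\rm emb}$ with probability at least $1-\delta_p$, then pass to the $V$-norm through the continuous embedding $H^s(Q)\hookrightarrow V$. Your extra remarks (that $Q$ is Lipschitz, that $s=2$ suffices, and that $u_a\in V$) are consistent with this argument and simply make explicit points the paper leaves implicit.
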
 
\begin{proof}
    By \cite[Theorem 4.12]{Dang2024}, for any $\epsilon'_A>0$, there exists $u_a\in \mathcal{N}_\rho$ such that 
    $$\Vert u^*-u_a\Vert_{H^s(Q)}< \epsilon'_A$$
    with probability at least $1-\delta_p$.
    Because $H^s(Q)\hookrightarrow V$ is continuous, there exists $C_{\rm emb}>0$ such that 
    $$\Vert v\Vert_V\le C_{\rm emb}\Vert v\Vert_{H^s(Q)}.$$
    Let $\epsilon'_A=\frac{\epsilon_A}{C_{\rm emb}}$. Then
    $$\Vert u^*-u_a\Vert_{H^s(Q)}< \epsilon'_A.$$
    Therefore,
$$
\|u^*-u_a\|_V
\le C_{\rm emb}\|u^*-u_a\|_{H^s(Q)}
< \epsilon_A.
$$
\end{proof}

\textbf{Statistical bound for the $L^2$-type empirical loss}. For the statistical component, we restrict attention to the practical collocation loss based on pointwise $L^2$-type sampling averages, the norms are evaluated by quadrature formulas based on interior and boundary collocation points: $\{\mathbf{x}^I_i\}^{N_{\rm in}}_{i=1}\subset Q$, $\{\mathbf{x}^O_i\}^{N_{\rm ini}}_{i=1} \subset \Omega$ and $\{\mathbf{x}^B_i\}^{N_{\rm b}}_{i=1} \subset S$, which are drawn i.i.d. from the uniform distributions on $Q,\ \Omega,\ S$ respectively, so we define the empirical loss function
\begin{equation}\label{loss_f}
    \hat{\mathcal{L}}_2(u):= \frac{|Q|}{N_{\rm in}}\sum^{N_{\rm in}}_{i=1}|\mathcal{G}u(\mathbf{x}^I_i)-f(\mathbf{x}^I_i)|^2+\frac{|\Omega|}{N_{\rm ini}}\sum^{N_{\rm ini}}_{i=1}|\mathcal{B}_0u(\mathbf{x}^O_i)-u_0(\mathbf{x}^O_i)|^2+\frac{|S|}{N_{\rm b}}\sum^{N_{\rm b}}_{i=1}|\mathcal{B}_1u(\mathbf{x}^B_i)-g(\mathbf{x}^B_i)|^2.
\end{equation}
To estimate the statistical component, we follow the argument of \cite[Theorem 4.14]{Dang2024}. Since the initial-data term in $\hat{\mathcal{L}}$ has the same form as an empirical average of a uniformly bounded quantity over i.i.d. samples, it can be treated in the same way as the interior and boundary components.
\begin{theorem}[Statistical bound for the $L^2$-type empirical loss]\label{statistical-np-thm}
Let
\[
\mathcal L_2(u)=\|\mathcal Gu-f\|_{L^2(Q)}^2+\|\mathcal B_0u-u_0\|_{L^2(\Omega)}^2+\|\mathcal B_1u-g\|_{L^2(S)}^2
\]
and let $\widehat{\mathcal L}_2(u)$ be its Monte Carlo collocation approximation. Under the stated uniform boundedness assumptions on the residual functions generated by the finite-dimensional RaNN trial space, with probability at least $1-\delta_s$,
\[
\sup_{u\in\mathcal N_\rho}|\mathcal L_2(u)-\widehat{\mathcal L}_2(u)|
\lesssim
C_N^2C_R^2M
\left(
\sqrt{\frac{\log(M/\delta_s)}{N_{\rm in}}}
+
\sqrt{\frac{\log(M/\delta_s)}{N_{\rm ini}}}
+
\sqrt{\frac{\log(M/\delta_s)}{N_{\rm b}}}
\right).
\]
Here $M$ denotes the number of trainable output coefficients. This theorem concerns only the $L^2$-type residual loss $\mathcal L_2$. It does not by itself imply convergence in the graph norm associated with the continuous loss $\mathcal L$. The graph-norm estimate for the implemented collocation solver requires the discrete graph-norm equivalence assumption stated below.
\end{theorem}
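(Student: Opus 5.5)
The bound is a uniform law of large numbers over the finite-dimensional RaNN trial space, and I would prove it as in \cite[Theorem 4.14]{Dang2024}. Write $u=\sum_{k=1}^M\alpha_k\psi_k\in\mathcal N_\rho$ with the coefficients restricted to a bounded set $\|\alpha\|\le C_N$; this is the uniform boundedness assumption. Each residual is affine in $\alpha$: $\mathcal Gu-f=\sum_k\alpha_k\mathcal G\psi_k-f$, and similarly for $\mathcal B_0$ and $\mathcal B_1$. Assume the features and data satisfy $|\mathcal G\psi_k|,|f|,\dots\le C_R$ pointwise on $\overline Q$, $\overline\Omega$, $S$. Then each squared residual is a quadratic form
\[
r_u(\mathbf x)^2=\tilde\alpha^{\rm T}K(\mathbf x)\tilde\alpha,\qquad \tilde\alpha=(\alpha,-1),
\]
where the entries of the $(M+1)\times(M+1)$ matrix $K(\mathbf x)$ are products of the bounded functions $\mathcal G\psi_k$ and $f$. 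The same holds for the initial and boundary residuals.

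\textbf{Step 1 (reduce to finitely many scalar averages).} We have $\mathcal L_2(u)-\widehat{\mathcal L}_2(u)=\tilde\alpha^{\rm T}(\bar K-\widehat K)\tilde\alpha$, summed over the three components. Here $\bar K=\int K$ and $\widehat K$ is the Monte Carlo average with the weights $|Q|/N_{\rm in}$, $|\Omega|/N_{\rm ini}$, $|S|/N_{\rm b}$. Hence
\[
\sup_{u}|\mathcal L_2(u)-\widehat{\mathcal L}_2(u)|\le \|\tilde\alpha\|_1^2\max_{k,l}|\bar K_{kl}-\widehat K_{kl}|\lesssim C_N^2 M\max_{k,l}|\bar K_{kl}-\widehat K_{kl}|.
\]
Note that $\|\tilde\alpha\|_1^2\le (M+1)\|\tilde\alpha\|_2^2$. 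This removes the supremum over the infinite function class and explains the factor $M$. I would expect to need a Cauchy--Schwarz bound here rather than a covering argument.

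\textbf{Step 2 (Hoeffding plus a union bound).} Each entry $K_{kl}(\mathbf x)$ is bounded by $C_R^2$. Apply Hoeffding's inequality to the i.i.d. average for each of the $O(M^2)$ entries in each of the three components. A union bound then gives, with probability at least $1-\delta_s$,
\[
|\bar K_{kl}-\widehat K_{kl}|\lesssim C_R^2\sqrt{\log(M/\delta_s)/N_\ast}
\]
for every entry, with $N_\ast\in\{N_{\rm in},N_{\rm ini},N_{\rm b}\}$ and $\log(M^2/\delta_s)\lesssim\log(M/\delta_s)$. Summing the three components and inserting the result into Step 1 gives the stated bound. The initial component is handled exactly like the other two, as the remark before the theorem notes.

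\textbf{Main obstacle.} The delicate point is making the uniform boundedness precise. It has two parts: a bounded coefficient set, which has to come from the regularized or minimum-norm least-squares solution, and pointwise bounds on $\mathcal G\psi_k$ and $\mathcal B_1\psi_k$. The feature bounds follow from (A1) and the bounded sampled weights together with $\psi\in L^\infty(W^{2,\infty})$ and the trace of $\nabla\psi$ on $S$. I would state these as the hypotheses that define $C_N$ and $C_R$. I would also note explicitly that only $L^2$-type norms are involved, so nothing is claimed about the trace norms in $X$ or $Z$.
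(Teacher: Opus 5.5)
Your proposal is correct and takes the same route as the paper. The paper gives no independent argument: it defers to \cite[Theorem 4.14]{Dang2024} and notes only that the initial-data average is handled exactly like the interior and boundary averages. Your sketch reproduces the stated $C_N^2C_R^2M\sqrt{\log(M/\delta_s)/N_\ast}$ form, and it usefully makes explicit the hypotheses the paper leaves implicit: a bounded coefficient set $\|\alpha\|\le C_N$, without which the supremum over $\mathcal N_\rho$ would be infinite, and pointwise bounds $C_R$ on $\mathcal G\psi_k$, $\mathcal B_0\psi_k$, $\mathcal B_1\psi_k$ and on the data.
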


\textbf{Conditional estimate for the collocation-based RaNN solver}. We next derive a deterministic conditional estimate for the implemented least-squares solver. The estimate uses the continuous graph-norm equivalence
and the discrete graph-norm equivalence assumption on the finite-dimensional RaNN error space. The statistical bound above provides motivation for the collocation loss, but the following result is conditional on the norming
property of the chosen collocation set.

For the empirical least-squares problem, let \(\widehat{\mathcal L}_N(v)\)
denote the collocation loss with the prescribed right-hand side, initial data,
and boundary data:
\[
\widehat{\mathcal L}_N(v)
=
\|\mathcal Gv-f\|_{Y,N}^2
+
\|\mathcal B_0v-u_0\|_{Z,N}^2
+
\|\mathcal B_1v-g\|_{X,N}^2 .
\]
For an error function \(e\), we define the corresponding homogeneous empirical
residual loss by
\[
\widehat{\mathcal L}_N^0(e)
:=
\|\mathcal Ge\|_{Y,N}^2
+
\|\mathcal B_0e\|_{Z,N}^2
+
\|\mathcal B_1e\|_{X,N}^2 .
\]
Similarly, let
\[
\mathcal L^0(e)
:=
\|\mathcal Ge\|_{Y}^2
+
\|\mathcal B_0e\|_{Z}^2
+
\|\mathcal B_1e\|_{X}^2
\]
be the corresponding continuous homogeneous residual loss.
Since the exact solution \(u^*\) satisfies
\[
\mathcal Gu^*=f,\qquad
\mathcal B_0u^*=u_0,\qquad
\mathcal B_1u^*=g,
\]
we have, by linearity of the residual operators,
\[
\widehat{\mathcal L}_N(w_\rho)
=
\widehat{\mathcal L}_N^0(w_\rho-u^*)
\qquad
\text{for all } w_\rho\in\mathcal N_\rho .
\]

\begin{assumption}[Discrete graph-norm equivalence on the RaNN error space]
\label{ass:discrete-graph-equivalence}
Let
\[
\mathcal E_\rho(u^*)
:=
\{v_\rho-u^*: v_\rho\in\mathcal N_\rho\}.
\]
We assume that the collocation set is norming for the residuals generated by
\(\mathcal E_\rho(u^*)\). Namely, there exist constants
\(c_{\rho,N},C_{\rho,N}>0\) such that, for all \(e\in\mathcal E_\rho(u^*)\),
\[
c_{\rho,N}\mathcal L^0(e)
\le
\widehat{\mathcal L}_N^0(e)
\le
C_{\rho,N}\mathcal L^0(e).
\]
\end{assumption}

After the random hidden-layer parameters are fixed, the RaNN trial space \(\mathcal N_\rho\) is finite-dimensional. In the implementation, the continuous graph-norm residual loss is replaced by the pointwise collocation loss \(\widehat{\mathcal L}_N\). Since pointwise sampling does not automatically yield uniform control of the full graph-norm residual, we impose Assumption \ref{ass:discrete-graph-equivalence}, which states that the chosen collocation set is norming for the residuals generated by the finite-dimensional RaNN error space.

Under this discrete norm-equivalence assumption, the empirical least-squares minimizer can be compared with the ideal continuous residual minimizer on \(\mathcal N_\rho\). The resulting estimate is therefore conditional on the
constants \(c_{\rho,N}\) and \(C_{\rho,N}\).

\begin{corollary}[Conditional graph-norm estimate for the implemented RaNN solver]
\label{cor:NP-implemented-conditional}
Let $u^*\in V$ be the exact solution of the linearized Nernst--Planck
subproblem, and let $u_\rho\in\mathcal N_\rho$ be a minimizer of the empirical
collocation loss $\widehat{\mathcal L}_N$. Assume that the graph-norm
equivalence in Theorem \ref{graph-norm-np-thm} holds and that
Assumption \ref{ass:discrete-graph-equivalence} is satisfied. If
\[
\inf_{v_\rho\in\mathcal N_\rho}\|v_\rho-u^*\|_V\le \epsilon_A,
\]
then
\[
\|u_\rho-u^*\|_V
\le
C\left(\frac{C_{\rho,N}}{c_{\rho,N}}\right)^{1/2}\epsilon_A .
\]
In particular, if $C_{\rho,N}/c_{\rho,N}$ is uniformly bounded, then
\[
\|u_\rho-u^*\|_V
\le C\epsilon_A .
\]
\end{corollary}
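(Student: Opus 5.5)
The argument is a chain of inequalities through the homogeneous residual loss. It uses the identity $\widehat{\mathcal L}_N(w_\rho)=\widehat{\mathcal L}_N^0(w_\rho-u^*)$ for all $w_\rho\in\mathcal N_\rho$, noted just before Assumption~\ref{ass:discrete-graph-equivalence}, together with Assumption~\ref{ass:discrete-graph-equivalence} and Theorem~\ref{graph-norm-np-thm}.

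\textbf{Step 1 (lower end).} Set $e=u_\rho-u^*\in\mathcal E_\rho(u^*)$. By the lower bound of Theorem~\ref{graph-norm-np-thm} applied to $e\in V$,
\[
C_L\|e\|_V^2\le \mathcal L^0(e).
\]
The left inequality of Assumption~\ref{ass:discrete-graph-equivalence} then gives
\[
\mathcal L^0(e)\le c_{\rho,N}^{-1}\widehat{\mathcal L}_N^0(e)=c_{\rho,N}^{-1}\widehat{\mathcal L}_N(u_\rho).
\]

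\textbf{Step 2 (minimality).} Since $u_\rho$ minimizes $\widehat{\mathcal L}_N$ over $\mathcal N_\rho$, we have $\widehat{\mathcal L}_N(u_\rho)\le\widehat{\mathcal L}_N(v_\rho)$ for every $v_\rho\in\mathcal N_\rho$.

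\textbf{Step 3 (upper end).} For any $v_\rho\in\mathcal N_\rho$, the identity, the right inequality of Assumption~\ref{ass:discrete-graph-equivalence}, and the upper bound of Theorem~\ref{graph-norm-np-thm} give
\[
\widehat{\mathcal L}_N(v_\rho)=\widehat{\mathcal L}_N^0(v_\rho-u^*)\le C_{\rho,N}\mathcal L^0(v_\rho-u^*)\le C_{\rho,N}C_U\|v_\rho-u^*\|_V^2 .
\]

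\textbf{Step 4 (combine).} Chaining Steps 1--3 yields
\[
\|u_\rho-u^*\|_V^2\le \frac{C_U}{C_L}\,\frac{C_{\rho,N}}{c_{\rho,N}}\,\|v_\rho-u^*\|_V^2 .
\]
Take the infimum over $v_\rho\in\mathcal N_\rho$. If the infimum is not attained, use a $v_\rho$ with $\|v_\rho-u^*\|_V\le\epsilon_A+\eta$ and let $\eta\to0$. This gives $\|u_\rho-u^*\|_V^2\le (C_U/C_L)(C_{\rho,N}/c_{\rho,N})\,\epsilon_A^2$. Taking square roots gives the claim with $C=(C_U/C_L)^{1/2}$, and the ``in particular'' statement follows immediately when $C_{\rho,N}/c_{\rho,N}$ is uniformly bounded.

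\textbf{Main point to check.} The only delicate issue is whether the identity $\widehat{\mathcal L}_N(w_\rho)=\widehat{\mathcal L}_N^0(w_\rho-u^*)$ and the norm-equivalence inequalities are legitimate for the functions involved. This requires $u^*\in V$, so that pointwise evaluation of $\mathcal G u^*$, $\mathcal B_0u^*$ and $\mathcal B_1u^*$ at the collocation points is meaningful. I treat this as implicit in Assumption~\ref{ass:discrete-graph-equivalence}, which is posed on exactly the set $\mathcal E_\rho(u^*)$. Also, both $u_\rho-u^*$ and $v_\rho-u^*$ lie in this set, so the assumption applies to each.
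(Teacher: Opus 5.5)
Your proposal is correct and follows the paper's proof essentially line by line: minimality of $u_\rho$, the identity $\widehat{\mathcal L}_N(w_\rho)=\widehat{\mathcal L}_N^0(w_\rho-u^*)$, the two sides of Assumption~\ref{ass:discrete-graph-equivalence} applied to $u_\rho-u^*$ and $v_\rho-u^*$, the continuous graph-norm equivalence of Theorem~\ref{graph-norm-np-thm}, and finally an infimum over $v_\rho$. Your explicit constant $C=(C_U/C_L)^{1/2}$ is exactly what the paper's generic $C$ stands for, and your $\eta$-argument for a non-attained infimum is harmless but not needed.
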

\begin{proof}
Let \(v_\rho\in\mathcal N_\rho\) be arbitrary. Since \(u_\rho\) minimizes the
empirical least-squares loss,
\[
\widehat{\mathcal L}_N(u_\rho)
\le
\widehat{\mathcal L}_N(v_\rho).
\]
Using
\[
\widehat{\mathcal L}_N(w_\rho)
=
\widehat{\mathcal L}_N^0(w_\rho-u^*),
\qquad w_\rho\in\mathcal N_\rho,
\]
we obtain
\[
\widehat{\mathcal L}_N^0(u_\rho-u^*)
\le
\widehat{\mathcal L}_N^0(v_\rho-u^*).
\]
By Assumption~\ref{ass:discrete-graph-equivalence},
\[
\mathcal L^0(u_\rho-u^*)
\le
c_{\rho,N}^{-1}
\widehat{\mathcal L}_N^0(u_\rho-u^*)
\le
c_{\rho,N}^{-1}
\widehat{\mathcal L}_N^0(v_\rho-u^*)
\le
\frac{C_{\rho,N}}{c_{\rho,N}}
\mathcal L^0(v_\rho-u^*).
\]
Using the continuous graph-norm equivalence,
\[
\|u_\rho-u^*\|_V^2
\le
C\mathcal L^0(u_\rho-u^*)
\le
C\frac{C_{\rho,N}}{c_{\rho,N}}
\mathcal L^0(v_\rho-u^*)
\le
C\frac{C_{\rho,N}}{c_{\rho,N}}
\|v_\rho-u^*\|_V^2 .
\]
Taking the infimum over \(v_\rho\in\mathcal N_\rho\) yields
\[
\|u_\rho-u^*\|_V
\le
C\left(\frac{C_{\rho,N}}{c_{\rho,N}}\right)^{1/2}
\inf_{v_\rho\in\mathcal N_\rho}\|v_\rho-u^*\|_V .
\]
The desired estimate follows from the approximation assumption.
\end{proof}

\subsubsection{Poisson subproblem}\label{poisson-raw}
We now consider the compatible Neumann Poisson subproblem. Owing to its elliptic structure, the corresponding graph-norm equivalence is simpler than that for the Nernst--Planck equation. Once this equivalence is established, the same residual-based approximation framework applies to compatible data.

For the theoretical estimate, the charge density in the Poisson subproblem \eqref{PNP-ite3} with the boundary condition in \eqref{PNP-ite-boundary} is assumed to satisfy the Neumann compatibility condition. In the implemented SO-RaNN algorithm, however, the charge density generated from the current concentration iterates may fail to satisfy this condition exactly. In that case, the computed potential is interpreted as a gauge-fixed Poisson least-squares fit, as explained in Remark~\ref{compatibility}, rather than as a classical compatible Neumann Poisson solution.

Consequently, the estimates in this subsection apply only to compatible Neumann Poisson data. If the computed charge density has a nonzero spatial mean, the corresponding Poisson least-squares fit is outside the scope of this compatible Poisson error estimate. The nonzero mean charge is then regarded as a compatibility defect of the computed concentrations and is used only in the interpretation of the reported potential and related diagnostic quantities.

Define
\[
V:=L^2(0,T;H^2(\Omega)),\qquad
Y:=L^2(0,T;L^2(\Omega)),\qquad
X:=L^2(0,T;H^{1/2}(\partial\Omega)).
\]
Since the homogeneous Neumann Poisson problem determines the potential only up to an additive spatial constant, we impose the gauge condition through an additional residual. Define
\[
\mathcal{G}\phi:=-\epsilon_p^2\Delta\phi,\qquad
\mathcal{B}\phi:=\partial_\mathbf{n}\phi,\qquad
\mathcal{M}\phi(t):=\frac{1}{|\Omega|^{1/2}}\int_\Omega \phi(x,t)\,dx .
\]
The population loss is then
\[
\mathcal{L}(\phi):=
\|\mathcal{G}\phi-f\|_Y^2+\|\mathcal{B}\phi-g\|_X^2+\|\mathcal{M}\phi\|_{L^2(0,T)}^2 .
\]

The corresponding empirical loss is defined as its discrete counterpart under the same norm structure. We now establish the graph-norm equivalence for the Poisson subproblem:
\begin{theorem}[Graph norm equivalence for Poisson equation]\label{graph-norm-p-thm}
    Assume that $\Omega$ is a bounded $C^{1,1}$ domain. There exist constants $C_L,\ C_U>0$ such that for all $\phi\in V$,
    \begin{equation}\label{graph-norm-p}
        C_L\Vert \phi\Vert^2_V\le \Vert\mathcal{G}\phi\Vert^2_Y+\Vert \mathcal{B}\phi \Vert^2_X + \Vert \mathcal{M}\phi \Vert^2_{L^2(0,T)} \le C_U \Vert\phi \Vert^2_V,
    \end{equation}
    where $C_U$ and $C_L$ depend on $\Omega$ and $\epsilon_p$; in particular, the lower-bound constant may deteriorate as $\epsilon_p\to0$.
\end{theorem}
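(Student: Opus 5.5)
The strategy is to prove both inequalities pointwise in time, for almost every $t\in(0,T)$, using elliptic estimates on $\Omega$ with constants independent of $t$. Integrating over $(0,T)$ then yields \eqref{graph-norm-p}. Since $\phi\in L^2(0,T;H^2(\Omega))$, for a.e. $t$ we have $\phi(t)\in H^2(\Omega)$. The three terms in the loss are the time integrals of $\epsilon_p^4\|\Delta\phi(t)\|_{L^2(\Omega)}^2$, $\|\partial_{\mathbf n}\phi(t)\|_{H^{1/2}(\partial\Omega)}^2$, and $|\mathcal M\phi(t)|^2$.

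\textbf{Upper bound.} This part is routine. We use $\|\Delta v\|_{L^2}\le \sqrt d\,\|v\|_{H^2}$. The normal trace satisfies $\|\partial_{\mathbf n}v\|_{H^{1/2}(\partial\Omega)}\le C_{\rm tr}\|v\|_{H^2(\Omega)}$; this holds because the trace map $H^1(\Omega)\to H^{1/2}(\partial\Omega)$ applied to $\nabla v$, combined with a $C^{0,1}$ normal field, works on a $C^{1,1}$ boundary. Cauchy--Schwarz gives $|\mathcal M v|\le \|v\|_{L^2(\Omega)}$. Squaring, adding, and integrating in time gives the constant $C_U=2\big(d\,\epsilon_p^4+C_{\rm tr}^2+1\big)$, or a similar expression.

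\textbf{Lower bound.} The key ingredient is the $H^2$-regularity estimate for the Neumann problem on a bounded $C^{1,1}$ domain: for every $v\in H^2(\Omega)$,
\[
\|v-\bar v\|_{H^2(\Omega)}\le C_\Omega\big(\|\Delta v\|_{L^2(\Omega)}+\|\partial_{\mathbf n}v\|_{H^{1/2}(\partial\Omega)}\big),\qquad \bar v=\frac1{|\Omega|}\int_\Omega v.
\]
I would derive this from the standard Grisvard-type estimate $\|w\|_{H^2}\le C(\|\Delta w\|_{L^2}+\|\partial_{\mathbf n}w\|_{H^{1/2}}+\|w\|_{L^2})$, applied to $w=v-\bar v$. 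The $\|w\|_{L^2}$ term is removed as follows. By Green's formula and the Poincar\'e--Wirtinger inequality,
\[
\|\nabla w\|_{L^2}^2=-\int_\Omega w\,\Delta w+\int_{\partial\Omega}w\,\partial_{\mathbf n}w,
\]
and this is bounded by $C\|w\|_{H^1}\big(\|\Delta w\|_{L^2}+\|\partial_{\mathbf n}w\|_{H^{-1/2}}\big)$. Then Poincar\'e--Wirtinger gives $\|w\|_{L^2}\le C(\|\Delta w\|+\|\partial_{\mathbf n}w\|_{H^{1/2}})$. Alternatively, a compactness (Peetre--Tartar) argument works, since the kernel of $v\mapsto(\Delta v,\partial_{\mathbf n}v)$ consists only of constants.

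The mean is then recovered from the gauge term. Since $\bar v\in\mathbb R$, we have $\|\bar v\|_{H^2(\Omega)}=|\Omega|^{1/2}|\bar v|=|\mathcal M v|$. Therefore
\[
\|v\|_{H^2}\le C_\Omega\big(\epsilon_p^{-2}\|\mathcal G v\|_{L^2}+\|\mathcal B v\|_{H^{1/2}}\big)+|\mathcal M v|.
\]
Squaring this, applying it at a.e. $t$ with $v=\phi(t)$, and integrating over $(0,T)$ gives
\[
C_L^{-1}=3\max\{C_\Omega^2\epsilon_p^{-4},\,C_\Omega^2,\,1\}.
\]
This makes explicit the deterioration as $\epsilon_p\to0$ that the theorem mentions. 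Measurability of $t\mapsto\mathcal M\phi(t)$ and of the other traces follows because each map is a bounded linear operator applied to the Bochner-measurable function $\phi$.

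The main obstacle is the Neumann $H^2$-regularity estimate with $H^{1/2}$ boundary data on a merely $C^{1,1}$ domain, including the removal of the lower-order $L^2$ term. I would cite Grisvard (Theorem 2.4.2.7 and the related a priori estimate) for the regularity, and use the explicit Green/Poincar\'e argument above to control the zero-mean part. No time regularity is involved, so the rest of the proof is bookkeeping.
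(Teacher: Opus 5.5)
Your proposal is correct and follows essentially the same route as the paper. Both arguments split $\phi(t)=w(t)+\bar\phi(t)$ pointwise in time into a zero-mean part, controlled by the Neumann $H^2$-regularity estimate, and a constant part, controlled exactly by $|\mathcal M\phi(t)|$; the upper bound comes from boundedness of $\Delta$, the normal-trace theorem, and Cauchy--Schwarz. The only difference is that you justify the zero-mean Neumann estimate (Grisvard's estimate plus Green's formula and Poincar\'e--Wirtinger) and track explicit constants, whereas the paper simply invokes that estimate as standard.
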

\begin{proof}
    \textbf{Upper bound}. By the boundedness of the Laplacian from \(H^2(\Omega)\) to \(L^2(\Omega)\),
    \[
    \|\mathcal{G}\phi\|_Y^2
    =
    \epsilon_p^4\|\Delta\phi\|_{L^2(0,T;L^2(\Omega))}^2
    \le
    C\|\phi\|_{V}^2 .
    \]
    By the trace theorem,
    \[
    \|\mathcal{B}\phi\|_X^2
    =
    \|\partial_n\phi\|_{L^2(0,T;H^{1/2}(\partial\Omega))}^2
    \le
    C\|\phi\|_{V}^2 .
    \]
    Moreover, for a.e. \(t\in(0,T)\),
    \[
    |\mathcal{M}\phi(t)|
    =
    \left|\frac{1}{|\Omega|^{1/2}}\int_\Omega \phi(x,t)\,dx\right|
    \le
    \|\phi(t)\|_{L^2(\Omega)}
    \le
    \|\phi(t)\|_{H^2(\Omega)}.
    \]
    Integrating in time gives
    \[
    \|\mathcal{M}\phi\|_{L^2(0,T)}^2
    \le
    \|\phi\|_{V}^2 .
    \]
    Combining these three estimates yields the upper bound.\\
    \textbf{Lower bound}. Fix \(\phi\in V\). For a.e. \(t\in(0,T)\), write
    \[
    \phi(\cdot,t)=w(\cdot,t)+\bar\phi(t),
    \qquad
    \bar\phi(t):=\frac{1}{|\Omega|}\int_\Omega \phi(x,t)\,dx ,
    \]
    where \(w(\cdot,t)\) has zero spatial mean. Since
    \[
    \Delta w(\cdot,t)=\Delta\phi(\cdot,t),
    \qquad
    \partial_{\mathbf n}w(\cdot,t)=\partial_{\mathbf n}\phi(\cdot,t),
    \]
    the standard \(H^2\)-regularity estimate for the Neumann problem with zero mean
    gives
    \[
    \|w(\cdot,t)\|_{H^2(\Omega)}
    \le
    C_\Omega
    \left(
    \|\Delta\phi(\cdot,t)\|_{L^2(\Omega)}
    +
    \|\partial_{\mathbf n}\phi(\cdot,t)\|_{H^{1/2}(\partial\Omega)}
    \right).
    \]
    Moreover,
    \[
    \|\bar\phi(t)\|_{H^2(\Omega)}
    \le C_\Omega |\mathcal M\phi(t)|.
    \]
    Using \(\phi=w+\bar\phi\), integrating in time, and recalling that
    \(\mathcal G\phi=-\epsilon_p^2\Delta\phi\) and
    \(\mathcal B\phi=\partial_{\mathbf n}\phi\), we obtain
    \[
    \|\phi\|_V^2
    \le
    C
    \left(
    \|\mathcal G\phi\|_Y^2
    +
    \|\mathcal B\phi\|_X^2
    +
    \|\mathcal M\phi\|_{L^2(0,T)}^2
    \right).
    \]
    This proves the lower bound.
\end{proof}

For the Neumann Poisson subproblem, the data \((f,g)\) are assumed to satisfy
the compatibility condition
\[
\int_\Omega f(x,t)\,dx
+
\epsilon_p^2\int_{\partial\Omega}g(x,t)\,ds
=0
\qquad\text{for a.e. }t\in(0,T).
\]
In the PNP application with homogeneous Neumann data, \(g=0\) and
\(f=\sum_{i=1}^N z_i c_i\), so this condition reduces to
\[
\int_\Omega \sum_{i=1}^N z_i c_i(x,t)\,dx=0 .
\]
Under this condition, the zero-mean Neumann Poisson solution exists and is unique.

Once the graph-norm equivalence is available, the compatible Poisson subproblem can be treated in the same conditional residual-estimate framework as the linearized Nernst--Planck subproblem. The pointwise collocation loss used in the implementation is only a computable surrogate. Therefore, the graph-norm estimate below is stated under a discrete norm-equivalence assumption for compatible Poisson residuals, rather than as an unconditional consequence of the $L^2$-type statistical bound. This estimate should not be read as a convergence theorem for an incompatible gauge-fixed least-squares potential fit generated by a charge density with nonzero spatial mean.

For the error variable \(e=\phi-\phi^*\), define
\[
\mathcal L_P^0(e)
:=
\|\mathcal Ge\|_Y^2
+
\|\mathcal Be\|_X^2
+
\|\mathcal Me\|_{L^2(0,T)}^2 ,
\]
and let \(\widehat{\mathcal L}_{P,N}^0(e)\) be the corresponding homogeneous
empirical residual loss.

Let
\[
\mathcal E_\rho^P(\phi^*)
:=
\{\varphi_\rho-\phi^*: \varphi_\rho\in\mathcal N_\rho\}.
\]

\begin{corollary}[Conditional graph-norm estimate for compatible Neumann Poisson data]
Assume that the Neumann compatibility condition holds and that the zero-mean
exact solution \(\phi^*\in V\) exists. Let \(\phi_\rho\in\mathcal N_\rho\) be
the empirical collocation minimizer for the Poisson subproblem. Assume that there exist constants \(c_{\rho,N}^{P},C_{\rho,N}^{P}>0\) such
that
\[
c_{\rho,N}^{P}\mathcal L_P^0(e)
\le
\widehat{\mathcal L}_{P,N}^0(e)
\le
C_{\rho,N}^{P}\mathcal L_P^0(e)
\]
for all \(e\in\mathcal E_\rho^P(\phi^*)\). If
\[
\inf_{\varphi_\rho\in\mathcal N_\rho}
\|\varphi_\rho-\phi^*\|_V
\le \epsilon_A,
\]
then
\[
\|\phi_\rho-\phi^*\|_V
\le
C\left(\frac{C_{\rho,N}^{P}}{c_{\rho,N}^{P}}\right)^{1/2}
\epsilon_A .
\]
\end{corollary}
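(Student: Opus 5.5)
The argument mirrors the proof of Corollary~\ref{cor:NP-implemented-conditional}, with the Poisson graph-norm equivalence of Theorem~\ref{graph-norm-p-thm} in place of Theorem~\ref{graph-norm-np-thm}.

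\textbf{Step 1: reduce the data-dependent loss to a homogeneous loss on errors.} Under the Neumann compatibility condition, the zero-mean exact solution $\phi^*\in V$ satisfies
\[
\mathcal G\phi^*=f,\qquad \mathcal B\phi^*=g,\qquad \mathcal M\phi^*=0 .
\]
The operators $\mathcal G$, $\mathcal B$, $\mathcal M$ are linear, and the empirical loss is built from pointwise or quadrature evaluations of the same residuals. Hence for every $w_\rho\in\mathcal N_\rho$ we have $\widehat{\mathcal L}_{P,N}(w_\rho)=\widehat{\mathcal L}_{P,N}^0(w_\rho-\phi^*)$. Compatibility is used exactly here: it guarantees that $\phi^*$ exists and that the gauge residual of $\phi^*$ vanishes, so the discrete gauge term also reduces to the homogeneous one.

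\textbf{Step 2: compare the empirical minimizer with an arbitrary competitor.} Fix any $\varphi_\rho\in\mathcal N_\rho$. Minimality of $\phi_\rho$ gives $\widehat{\mathcal L}_{P,N}^0(\phi_\rho-\phi^*)\le \widehat{\mathcal L}_{P,N}^0(\varphi_\rho-\phi^*)$. Both $\phi_\rho-\phi^*$ and $\varphi_\rho-\phi^*$ lie in $\mathcal E_\rho^P(\phi^*)$, so the assumed discrete norming property gives
\[
\mathcal L_P^0(\phi_\rho-\phi^*)\le \frac{C^P_{\rho,N}}{c^P_{\rho,N}}\,\mathcal L_P^0(\varphi_\rho-\phi^*).
\]

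\textbf{Step 3: pass to the $V$-norm via graph-norm equivalence.} Apply the lower bound of Theorem~\ref{graph-norm-p-thm} on the left and the upper bound on the right. This yields
\[
\|\phi_\rho-\phi^*\|_V^2\le \frac{C_U}{C_L}\,\frac{C^P_{\rho,N}}{c^P_{\rho,N}}\,\|\varphi_\rho-\phi^*\|_V^2 .
\]
Taking the infimum over $\varphi_\rho$ and using the approximation hypothesis gives the claim with $C=(C_U/C_L)^{1/2}$. This constant depends on $\Omega$ and $\epsilon_p$, and may deteriorate as $\epsilon_p\to0$ through $C_L$.

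\textbf{Main point to check.} The only real subtlety is Step 1. One must verify that the empirical gauge term is consistent with the continuous $\mathcal M$, i.e.\ that it is a linear functional that vanishes on $\phi^*$. This holds precisely because the compatible zero-mean solution is used as the reference. For incompatible charge densities no such $\phi^*$ exists, and the argument does not apply, consistent with Remark~\ref{compatibility}.
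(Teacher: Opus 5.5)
Your proposal is correct and is the argument the paper intends: the paper gives no separate proof but defers to the template of Corollary~\ref{cor:NP-implemented-conditional}, namely minimality of $\phi_\rho$, reduction to the homogeneous empirical loss, the discrete norming bounds, and the two-sided graph-norm equivalence of Theorem~\ref{graph-norm-p-thm}, which gives $C=(C_U/C_L)^{1/2}$ as you found. One caveat on your Step~1: the continuous zero-mean property of $\phi^*$ makes the empirical gauge residual vanish only if the discrete mean functional integrates $\phi^*$ exactly, which is not automatic for a quadrature-based gauge row, so the identity $\widehat{\mathcal L}_{P,N}(w_\rho)=\widehat{\mathcal L}_{P,N}^0(w_\rho-\phi^*)$ is really a convention built into how the empirical loss is defined rather than a consequence of compatibility, and the paper adopts that convention implicitly as well.
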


We next record a simple consequence of the global charge-neutrality compatibility condition. This estimate explains how the Neumann compatibility defect enters the raw least-squares approximation. In the continuous problem, the charge density is mean-free. In the raw RaNN discretization, however, the concentrations are obtained from least-squares subproblem solves and are not constrained to satisfy exact charge neutrality. The following estimate shows that the resulting raw charge defect is controlled by the concentration approximation errors.

\begin{proposition}[Raw charge defect and Neumann compatibility]
\label{prop:raw-charge-defect}
Let \(c_i\), \(i=1,\ldots,N\), be exact concentrations satisfying
\[
\sum_{i=1}^N z_i\int_\Omega c_i(x,t)\,dx=0
\quad \text{for a.e. } t\in(0,T).
\]
Let \(\bar c_{i,\rho}\) be the corresponding raw RaNN approximations, and define
\[
\mathcal Q_\rho(t)
:=
\int_\Omega \sum_{i=1}^N z_i \bar c_{i,\rho}(x,t)\,dx .
\]
Then, for a.e. \(t\in(0,T)\),
\[
|\mathcal Q_\rho(t)|
\le
\sum_{i=1}^N |z_i|
\|\bar c_{i,\rho}(\cdot,t)-c_i(\cdot,t)\|_{L^1(\Omega)} .
\]
In particular,
\[
|\mathcal Q_\rho(t)|
\le
|\Omega|^{1/2}
\sum_{i=1}^N |z_i|
\|\bar c_{i,\rho}(\cdot,t)-c_i(\cdot,t)\|_{L^2(\Omega)},
\]
and
\[
\|\mathcal Q_\rho\|_{L^2(0,T)}
\le
|\Omega|^{1/2}
\sum_{i=1}^N |z_i|
\|\bar c_{i,\rho}-c_i\|_{L^2(Q)} .
\]
\end{proposition}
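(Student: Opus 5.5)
The plan is to subtract the exact charge-neutrality identity from the definition of \(\mathcal Q_\rho(t)\). This turns the raw charge defect into a weighted sum of concentration errors, and the three stated bounds then follow from the triangle inequality, Cauchy--Schwarz in space, and Minkowski's inequality in time. The argument is elementary. The only point needing care is that the pointwise-in-time quantities are well defined and measurable, since the bounds are stated for a.e.\ \(t\).

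First, fix \(t\) outside the null set where the neutrality identity fails, and where \(\bar c_{i,\rho}(\cdot,t)\) and \(c_i(\cdot,t)\) lie in \(L^2(\Omega)\subset L^1(\Omega)\) (using that \(\Omega\) is bounded). For the raw RaNN outputs this is automatic, since they are finite combinations of bounded smooth features. For the exact concentrations it holds for a.e.\ \(t\) by Fubini, provided \(c_i\in L^2(Q)\). Using \(\sum_i z_i\int_\Omega c_i(x,t)\,dx=0\), I will write
\[
\mathcal Q_\rho(t)=\sum_{i=1}^N z_i\int_\Omega\bigl(\bar c_{i,\rho}(x,t)-c_i(x,t)\bigr)\,dx .
\]
Taking absolute values, applying the triangle inequality, and bounding \(\bigl|\int_\Omega g\bigr|\le\|g\|_{L^1(\Omega)}\) gives the first estimate.

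Second, on each term I will apply Cauchy--Schwarz in the form \(\|g\|_{L^1(\Omega)}\le|\Omega|^{1/2}\|g\|_{L^2(\Omega)}\). This yields the pointwise \(L^2\) bound.

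Third, I will view the right-hand side as a finite sum of functions of \(t\), namely \(a_i(t):=\|\bar c_{i,\rho}(\cdot,t)-c_i(\cdot,t)\|_{L^2(\Omega)}\). Each \(a_i\) is measurable by Fubini, and \(\|a_i\|_{L^2(0,T)}=\|\bar c_{i,\rho}-c_i\|_{L^2(Q)}\). I will take the \(L^2(0,T)\) norm of the pointwise inequality and use Minkowski's inequality to pass the norm through the sum. This gives
\[
\|\mathcal Q_\rho\|_{L^2(0,T)}\le|\Omega|^{1/2}\sum_i|z_i|\,\|a_i\|_{L^2(0,T)},
\]
which is the last claim.

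There is no substantial obstacle here. The step I expect to need the most attention is the measure-theoretic bookkeeping: justifying a common full-measure set of times and the measurability of \(\mathcal Q_\rho\) and of the \(a_i\). This follows from Fubini–Tonelli under the implicit assumption \(c_i\in L^2(Q)\), which I will state explicitly. If that assumption is not made, the final inequality is trivially true with the right-hand side equal to \(+\infty\).
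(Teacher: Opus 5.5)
Your proposal is correct and follows the same route as the paper: you subtract the neutrality identity to write \(\mathcal Q_\rho(t)\) as a weighted sum of concentration errors, then apply the triangle inequality, H\"older (Cauchy--Schwarz) in space, and integrate in time. Your explicit use of Minkowski's inequality and the Fubini--Tonelli bookkeeping (a common full-measure set of times, measurability of the \(a_i\), and \(c_i\in L^2(Q)\)) just make precise what the paper leaves implicit in ``integration in time.''
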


\begin{proof}
Using the charge neutrality of the exact solution, we have
\[
\mathcal Q_\rho(t)
=
\sum_{i=1}^N z_i
\int_\Omega
\bigl(\bar c_{i,\rho}(x,t)-c_i(x,t)\bigr)\,dx .
\]
The first estimate follows from the triangle inequality. The second follows
from Hölder's inequality, and the \(L^2(0,T)\) estimate follows by integration
in time.
\end{proof}

\subsubsection{A local convergence result for the outer Picard iteration}\label{local-picard}
The residual-based estimates established in Sections \ref{np-raw} and \ref{poisson-raw} are local to the linearized subproblems obtained after freezing the coefficients from the previous iterate. We now show that, for the PNP system, these linearized subproblems induce a locally contractive outer Picard map at the PDE level. To keep the argument consistent with the analysis above, we state the result for the exact subproblem solves and then interpret the implemented raw RaNN iteration as an inexact perturbation of this map.

Let $p>d$ and define
\[
\mathcal W_T^p
:=
\left\{
\psi\in L^\infty(0,T;W^{2,p}(\Omega)):
\int_\Omega \psi(\mathbf{x},t)\,d\mathbf{x}=0
\ {\rm for\ a.e.}\ t\in(0,T)
\right\},
\]
with norm
\[
\|\psi\|_{\mathcal W_T^p}
:=
\|\psi\|_{L^\infty(0,T;W^{2,p}(\Omega))}.
\]
We also define
\[
\mathcal X_T^p
:=
L^\infty(0,T;L^p(\Omega))
\cap
L^2(0,T;W^{1,p}(\Omega)).
\]
For $R>0$, set
\[
\mathbb B_R^p
:=
\{\psi\in\mathcal W_T^p:
\|\psi\|_{\mathcal W_T^p}\le R\}.
\]
Since $p>d$, the embedding
\[
W^{2,p}(\Omega)\hookrightarrow W^{1,\infty}(\Omega)
\]
holds, and therefore the drift field $\nabla\psi$ is uniformly bounded for
$\psi\in\mathbb B_R^p$.
For a given \(\psi\in\mathbb B_R^p\), let \(S_i(\psi)\), \(i=1,2\), denote the
solution of the linearized Nernst--Planck subproblem with homogeneous no-flux
boundary condition and initial datum \(g_i\). Set
\[
M_i^0:=\int_\Omega g_i(x)\,dx,\qquad i=1,2,
\]
and assume the charge-neutral compatibility condition
\[
z_1M_1^0+z_2M_2^0=0.
\]
Define the admissible class
\[
\mathcal A_T^p
:=
\left\{
(c_1,c_2)\in(\mathcal X_T^p)^2:
z_1\int_\Omega c_1(x,t)\,dx
+
z_2\int_\Omega c_2(x,t)\,dx
=0
\ \text{for a.e. }t\in(0,T)
\right\}.
\]
Since the no-flux condition preserves the species masses,
\[
\int_\Omega S_i(\psi)(x,t)\,dx=M_i^0,
\]
the pair \((S_1(\psi),S_2(\psi))\) belongs to \(\mathcal A_T^p\). Hence the
zero-mean Neumann Poisson operator \(P\) is applicable, and the raw Picard map
\[
\mathcal T(\psi):=P(S_1(\psi),S_2(\psi))
\]
is well defined on \(\mathbb B_R^p\).

\begin{theorem}[Conditional local contraction of the raw Picard map]\label{outer-picard}
Let $\Omega\subset\mathbb R^d$ be a bounded domain with $C^{1,1}$ boundary,
and let $p>d$. Let $g_i$ be sufficiently regular so that the assumptions below
are meaningful. Assume that for every $\psi\in\mathbb B_R^p$ the following
properties hold.

(i) The linearized Nernst-Planck subproblem defining $S_i(\psi)$ is uniquely
solvable in $\mathcal{X}_T^p$, preserves the species masses,
\[
\int_\Omega S_i(\psi)(x,t)\,dx=M_i^0
\qquad\text{for a.e. }t\in(0,T),
\]
and satisfies
\[
\sum_{i=1}^2\|S_i(\psi)\|_{\mathcal X_T^p}
+
\sum_{i=1}^2\|S_i(\psi)\|_{L^\infty(Q)}
\le C_R.
\]

(ii) The solution map of the linearized Nernst-Planck subproblem is locally
Lipschitz in the sense that, for all
$\psi,\tilde\psi\in\mathbb B_R^p$,
\[
\sum_{i=1}^2
\|S_i(\psi)-S_i(\tilde\psi)\|_{L^\infty(0,T;L^p(\Omega))}
\le
C_R T^{1/2}
\|\psi-\tilde\psi\|_{\mathcal W_T^p}.
\]

(iii) The Neumann Poisson operator $P:\mathcal A_T^p\to\mathcal W_T^p$ satisfies the
$W^{2,p}$ estimate
\[
\|P(c_1,c_2)-P(\tilde c_1,\tilde c_2)\|_{\mathcal W_T^p}
\le
C_P
\sum_{i=1}^2
\|c_i-\tilde c_i\|_{L^\infty(0,T;L^p(\Omega))}
\]
for all $(c_1,c_2),(\tilde c_1,\tilde c_2)\in\mathcal A_T^p$.

(iv) For $0<T\le T_0$, the map $\mathcal T$ maps $\mathbb B_R^p$ into itself:
\[
\mathcal T(\mathbb B_R^p)\subset \mathbb B_R^p.
\]
Assume that \(C_R\) and \(C_P\) can be chosen uniformly for
\(0<T\le T_0\). Then there exists $T_*>0$, depending on
$R,\Omega,p,D_i,z_i,\epsilon_p$ and the initial data, such that for every
$0<T\le T_*$, the map $\mathcal T$ is a contraction on $\mathbb B_R^p$.
More precisely,
\[
\|\mathcal T(\psi)-\mathcal T(\tilde\psi)\|_{\mathcal W_T^p}
\le
q
\|\psi-\tilde\psi\|_{\mathcal W_T^p},
\qquad
q:=C_PC_RT^{1/2}<1.
\]
Consequently, $\mathcal T$ has a unique fixed point
$\psi^*\in\mathbb B_R^p$, and the Picard iteration
$\psi^{n+1}=\mathcal T(\psi^n)$ converges in $\mathcal W_T^p$ to $\psi^*$.
The associated concentration sequence \(c_i^{n+1}=S_i(\psi^n)\) converges to \(c_i^*=S_i(\psi^*)\) in
\(L^\infty(0,T;L^p(\Omega))\), with the convergence estimate following from assumption (ii).
\end{theorem}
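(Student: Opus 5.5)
The plan is to compose the two Lipschitz estimates (ii) and (iii) and then invoke the Banach fixed-point theorem on the closed ball $\mathbb B_R^p$.

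\textbf{Lipschitz bound for $\mathcal T$.} Fix $\psi,\tilde\psi\in\mathbb B_R^p$. By (i), both pairs $(S_1(\psi),S_2(\psi))$ and $(S_1(\tilde\psi),S_2(\tilde\psi))$ lie in $\mathcal A_T^p$. Mass preservation together with $z_1M_1^0+z_2M_2^0=0$ gives charge neutrality, so $P$ may be applied to both pairs. Estimate (iii) then yields
\[
\|\mathcal T(\psi)-\mathcal T(\tilde\psi)\|_{\mathcal W_T^p}\le C_P\sum_{i=1}^2\|S_i(\psi)-S_i(\tilde\psi)\|_{L^\infty(0,T;L^p(\Omega))}.
\]
Inserting (ii) gives the bound $C_PC_RT^{1/2}\|\psi-\tilde\psi\|_{\mathcal W_T^p}$. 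Since $C_R$ and $C_P$ are uniform for $0<T\le T_0$, I would choose
\[
T_*:=\min\{T_0,\ (2C_PC_R)^{-2}\},
\]
or any $T_*<\min\{T_0,(C_PC_R)^{-2}\}$. Then $q=C_PC_RT^{1/2}<1$ for every $0<T\le T_*$. Self-mapping of the ball is exactly assumption (iv), which holds because $T\le T_0$.

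\textbf{Completeness.} This is the step needing the most care, because Banach's theorem requires $\mathbb B_R^p$ to be a complete metric space in the $\mathcal W_T^p$ norm. I would observe that $L^\infty(0,T;W^{2,p}(\Omega))$ is a Banach space. The zero-mean constraint is defined by the bounded linear functional $\psi\mapsto\int_\Omega\psi(\cdot,t)\,dx$, with values in $L^\infty(0,T)$, so $\mathcal W_T^p$ is a closed subspace. A closed ball in a Banach space is closed, hence $\mathbb B_R^p$ is complete. The contraction mapping principle then yields a unique fixed point $\psi^*\in\mathbb B_R^p$, together with
\[
\|\psi^n-\psi^*\|_{\mathcal W_T^p}\le q^n\|\psi^0-\psi^*\|_{\mathcal W_T^p}\le \frac{q^n}{1-q}\|\psi^1-\psi^0\|_{\mathcal W_T^p}
\]
for any starting iterate $\psi^0\in\mathbb B_R^p$.

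\textbf{Concentrations.} Set $c_i^{n+1}=S_i(\psi^n)$ and $c_i^*=S_i(\psi^*)$. Applying (ii) once more gives
\[
\sum_i\|c_i^{n+1}-c_i^*\|_{L^\infty(0,T;L^p)}\le C_RT^{1/2}\|\psi^n-\psi^*\|_{\mathcal W_T^p}\le C_RT^{1/2}q^n\|\psi^0-\psi^*\|_{\mathcal W_T^p}\to0.
\]
This is the stated convergence, with a geometric rate.

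The argument is essentially bookkeeping, since the analytic difficulties are packaged into hypotheses (i)–(iv). The only delicate points are to check that $P$ is applied only to charge-neutral pairs, which follows from mass preservation in (i) and the compatibility of the initial masses, and that the uniformity of $C_R$ and $C_P$ in $T$ is what allows $T_*$ to be chosen independently of $T$.
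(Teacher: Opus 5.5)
Your proposal is correct and follows the paper's proof. Like the paper, you check via mass preservation and $z_1M_1^0+z_2M_2^0=0$ that $P$ is only applied to charge-neutral pairs, compose (ii) with (iii), choose $T_*$ so that $C_PC_RT_*^{1/2}<1$, and apply Banach's fixed-point theorem with (iv) supplying the self-map; your explicit restriction $T_*\le T_0$, the completeness check for $\mathbb B_R^p$, and the geometric rate for the concentrations are details the paper leaves implicit, and they tighten the argument rather than change it.
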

\begin{proof}
For $\psi\in \mathbb B_R^p$, assumption (i) implies that
\[
\int_\Omega S_i(\psi)(x,t)\,dx=M_i^0.
\]
Hence, by $z_1M_1^0+z_2M_2^0=0$,
\[
(S_1(\psi),S_2(\psi))\in\mathcal A_T^p.
\]
Therefore $\mathcal T(\psi)=P(S_1(\psi),S_2(\psi))$ is well defined. Let $\psi,\tilde\psi\in\mathbb B_R^p$ and define
\[
c_i:=S_i(\psi),\qquad
\tilde c_i:=S_i(\tilde\psi),\qquad
e_i:=c_i-\tilde c_i,\quad i=1,2.
\]
By assumption (ii),
\[
\sum_{i=1}^2
\|e_i\|_{L^\infty(0,T;L^p(\Omega))}
\le
C_R T^{1/2}
\|\psi-\tilde\psi\|_{\mathcal W_T^p}.
\]
Let
\[
\Phi:=\mathcal T(\psi)-\mathcal T(\tilde\psi).
\]
Then $\Phi$ satisfies
\[
-\epsilon_p^2\Delta\Phi=z_1e_1+z_2e_2,
\]
with homogeneous Neumann boundary condition and the zero-mean constraint.
By assumption (iii),
\[
\|\Phi\|_{\mathcal W_T^p}
\le
C_P
\sum_{i=1}^2
\|e_i\|_{L^\infty(0,T;L^p(\Omega))}
\le
C_PC_R T^{1/2}
\|\psi-\tilde\psi\|_{\mathcal W_T^p}.
\]
Choose $T_*>0$ such that
\[
q:=C_PC_R T_*^{1/2}<1.
\]
Then for every $0<T\le T_*$,
\[
\|\mathcal T(\psi)-\mathcal T(\tilde\psi)\|_{\mathcal W_T^p}
\le
q\|\psi-\tilde\psi\|_{\mathcal W_T^p}.
\]
Thus $\mathcal T$ is a contraction on $\mathbb B_R^p$. Since
$\mathcal T(\mathbb B_R^p)\subset\mathbb B_R^p$ by assumption (iv), the
Banach fixed-point theorem yields the unique fixed point and the convergence
of the Picard iteration.
\end{proof}

The admissibility condition above is imposed at the continuous PDE level in order to make the homogeneous Neumann Poisson operator well defined. The raw Picard map analyzed in Theorem~\ref{outer-picard} is therefore an exact-subproblem map on the charge-neutral admissible class. In the implemented RaNN method, the Nernst--Planck and Poisson subproblems are solved in a least-squares sense. Hence the no-flux boundary condition and the species
mass identities are satisfied only approximately, and the numerical charge neutrality is not expected to hold exactly. The implemented iteration should therefore be interpreted as an inexact perturbation of the admissible
continuous Picard map, as in Corollary~\ref{outer-picard-rho}.

\begin{corollary}\label{outer-picard-rho}
Under the assumptions of Theorem~\ref{outer-picard}, let
$\mathcal T_\rho:\mathbb B_R^p\to\mathbb B_R^p$ denote the inexact Picard map
induced by the implemented raw RaNN solvers. Assume $\psi_\rho^0\in\mathbb B_R^p$, and there exists
$\eta_\rho\ge 0$ such that
\[
\|\mathcal T_\rho(\psi)-\mathcal T(\psi)\|_{\mathcal W_T^p}
\le \eta_\rho,
\qquad \forall \psi\in\mathbb B_R^p.
\]
Let $\psi^*\in\mathbb B_R^p$ be the fixed point of $\mathcal T$, and let $\psi_\rho^{n+1}=\mathcal T_\rho(\psi_\rho^n)$.
Then, for all $n\ge 0$,
\[
\|\psi_\rho^{n+1}-\psi^*\|_{\mathcal W_T^p}
\le
q\|\psi_\rho^n-\psi^*\|_{\mathcal W_T^p}
+\eta_\rho,
\]
where $q\in(0,1)$ is the contraction constant in Theorem~\ref{outer-picard}.
Consequently,
\[
\|\psi_\rho^n-\psi^*\|_{\mathcal W_T^p}
\le
q^n\|\psi_\rho^0-\psi^*\|_{\mathcal W_T^p}
+
\frac{\eta_\rho}{1-q},
\qquad n\ge 0.
\]
In particular, the inexact Picard iteration converges to an
$O(\eta_\rho)$-neighborhood of the exact fixed point.
\end{corollary}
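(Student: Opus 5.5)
The estimate is a standard perturbed-contraction argument, combining the fixed-point property of $\psi^*$ with the contraction from Theorem~\ref{outer-picard} and the uniform defect bound $\eta_\rho$.

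\textbf{One-step bound.} Fix $n\ge 0$. By hypothesis $\mathcal T_\rho$ maps $\mathbb B_R^p$ into itself and $\psi_\rho^0\in\mathbb B_R^p$, so by induction every iterate $\psi_\rho^n$ lies in $\mathbb B_R^p$. This matters because both the contraction estimate and the defect bound are only available on the ball. Since $\psi^*=\mathcal T(\psi^*)$, we can insert $\mathcal T(\psi_\rho^n)$ and split
\[
\psi_\rho^{n+1}-\psi^*
=
\bigl(\mathcal T_\rho(\psi_\rho^n)-\mathcal T(\psi_\rho^n)\bigr)
+
\bigl(\mathcal T(\psi_\rho^n)-\mathcal T(\psi^*)\bigr).
\]
We take the $\mathcal W_T^p$ norm and apply the triangle inequality. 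The first term is bounded by $\eta_\rho$ from the uniform defect assumption. The second term is bounded by $q\|\psi_\rho^n-\psi^*\|_{\mathcal W_T^p}$, using the contraction estimate of Theorem~\ref{outer-picard} with $\psi=\psi_\rho^n$ and $\tilde\psi=\psi^*$, both in $\mathbb B_R^p$. Together these give the one-step inequality.

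\textbf{Unrolling.} Write $a_n:=\|\psi_\rho^n-\psi^*\|_{\mathcal W_T^p}$, so that $a_{n+1}\le q a_n+\eta_\rho$. Induction on $n$ gives
\[
a_n\le q^n a_0+\eta_\rho\sum_{k=0}^{n-1}q^k.
\]
Since $q\in(0,1)$, the geometric sum is at most $1/(1-q)$, which yields the stated global bound. The case $n=0$ holds trivially. Letting $n\to\infty$ gives $\limsup_n a_n\le \eta_\rho/(1-q)$, which is the claimed $O(\eta_\rho)$-neighborhood statement.

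There is no real obstacle. The only point needing care is invariance of the ball: the iterates must stay in $\mathbb B_R^p$ so that both the contraction and the defect bound apply at every step, and this is exactly what the hypothesis $\mathcal T_\rho:\mathbb B_R^p\to\mathbb B_R^p$ guarantees.
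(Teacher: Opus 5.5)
Your proposal is correct and follows the paper's proof essentially verbatim. You insert $\mathcal T(\psi_\rho^n)$ using $\mathcal T(\psi^*)=\psi^*$, bound the two pieces by $\eta_\rho$ and by $q\|\psi_\rho^n-\psi^*\|_{\mathcal W_T^p}$, then iterate and bound the geometric sum by $1/(1-q)$. Your explicit remark that every iterate stays in $\mathbb B_R^p$, because $\mathcal T_\rho$ maps the ball into itself, is a point the paper uses only implicitly.
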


\begin{proof}
By the triangle inequality and the fixed-point property
$\mathcal T(\psi^*)=\psi^*$, we have
\[
\begin{aligned}
\|\psi_\rho^{n+1}-\psi^*\|_{\mathcal W_T^p}
&=
\|\mathcal T_\rho(\psi_\rho^n)-\mathcal T(\psi^*)\|_{\mathcal W_T^p} \\
&\le
\|\mathcal T_\rho(\psi_\rho^n)-\mathcal T(\psi_\rho^n)\|_{\mathcal W_T^p}
+
\|\mathcal T(\psi_\rho^n)-\mathcal T(\psi^*)\|_{\mathcal W_T^p} \\
&\le
\eta_\rho
+
q\|\psi_\rho^n-\psi^*\|_{\mathcal W_T^p}.
\end{aligned}
\]
Iterating this inequality gives
\[
\|\psi_\rho^n-\psi^*\|_{\mathcal W_T^p}
\le
q^n\|\psi_\rho^0-\psi^*\|_{\mathcal W_T^p}
+
\sum_{k=0}^{n-1}q^k\eta_\rho
\le
q^n\|\psi_\rho^0-\psi^*\|_{\mathcal W_T^p}
+
\frac{\eta_\rho}{1-q}.
\]
This proves the result.
\end{proof}

The above corollary applies only to the raw RaNN Picard iteration. The positivity cut-off, the mass correction, the SAV correction, and the final Poisson LS update are post-processing or value-level correction steps and are analyzed separately below.

\subsection{Stability of the structure-oriented correction steps}\label{post-processing}
In this subsection, we analyze the structure-oriented correction and least-squares fitting steps used in the proposed method: the positivity cut-off, the discrete-time mass correction, the SAV auxiliary-variable correction, and the final potential update. The purpose is not to establish a full convergence theory for the fully post-processed output, but to show that each operation is stable in the sense relevant to the implementation and preserves the intended structure at the appropriate level.

\subsubsection{Positivity cut-off}
We begin with the positivity-enforcing cut-off applied to the concentration outputs. Since this operation acts pointwise on the value field, it is natural to analyze it in terms of value-error stability rather than in terms of the full $V$-norm.

\begin{proposition}[Value-level stability of the positivity cut-off]
\label{cut-off}
Let $Q=\Omega\times I$ and let
\[
P_\delta(s):=\max\{s,\delta\}, \qquad \delta>0 .
\]
For a raw approximation $u_\rho$, define the value-level cut-off
\[
u_\rho^+ := P_\delta(u_\rho).
\]
Assume that the exact concentration satisfies
\[
u^*\ge 0 \qquad \text{a.e. in } Q.
\]
Then, pointwise a.e. in $Q$, one has the following alternative:
\[
\text{either}\quad |u_\rho^+-u_\rho|\le 2\delta,
\qquad
\text{or}\quad
|u_\rho^+-u^*|\le |u_\rho-u^*|.
\]
More precisely,
\[
|u_\rho^+-u^*|
\le
|u_\rho-u^*|
+
\delta\,\mathbf 1_{\{-\delta\le u_\rho<\delta\}} .
\]
Consequently,
\[
\|u_\rho^+-u^*\|_{L^2(Q)}
\le
\|u_\rho-u^*\|_{L^2(Q)}
+
\delta\,|\{-\delta\le u_\rho<\delta\}|^{1/2},
\]
and in particular,
\[
\|u_\rho^+-u^*\|_{L^2(Q)}
\le
\|u_\rho-u^*\|_{L^2(Q)}
+
\delta |Q|^{1/2}.
\]
\end{proposition}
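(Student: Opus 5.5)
The plan is a pointwise case analysis on the value of the raw output $u_\rho$ at a fixed point of $Q$, using only $u^*\ge0$ and the definition $P_\delta(s)=\max\{s,\delta\}$. After that, the $L^2(Q)$ bounds follow from Minkowski's inequality. No earlier result of the paper is needed. I would first prove the refined pointwise inequality
\[
|u_\rho^+-u^*|\le |u_\rho-u^*|+\delta\,\mathbf 1_{\{-\delta\le u_\rho<\delta\}},
\]
and read off the stated alternative along the way.

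\emph{Case 1: $u_\rho\ge\delta$.} Here $u_\rho^+=u_\rho$, so $|u_\rho^+-u^*|=|u_\rho-u^*|$ and both statements are immediate.

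\emph{Case 2: $u_\rho<\delta$.} Here $u_\rho^+=\delta$, and I would split according to the size of $u^*$.
\begin{itemize}
\item If $u^*\ge\delta$, then $|\delta-u^*|=u^*-\delta\le u^*-u_\rho=|u_\rho-u^*|$. This is the non-expansive branch, and no indicator term is needed.
\item If $0\le u^*<\delta$ and $u_\rho<-\delta$, then $|\delta-u^*|\le\delta<u^*-u_\rho=|u_\rho-u^*|$. Again the non-expansive branch holds.
\item If $0\le u^*<\delta$ and $-\delta\le u_\rho<\delta$, then $|\delta-u^*|\le\delta\le |u_\rho-u^*|+\delta$. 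This is exactly where the indicator term appears. In this sub-case one also has $|u_\rho^+-u_\rho|=\delta-u_\rho\le2\delta$, which gives the first branch of the alternative.
\end{itemize}
Combining the cases yields the refined pointwise bound a.e. in $Q$. Since $u^*\ge0$ a.e., the three sub-cases of Case 2 exhaust all possibilities there.

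For the integrated bounds, I take $L^2(Q)$ norms in the pointwise inequality and apply the triangle inequality in $L^2(Q)$. This gives
\[
\|u_\rho^+-u^*\|_{L^2(Q)}\le\|u_\rho-u^*\|_{L^2(Q)}+\delta\,\|\mathbf 1_{\{-\delta\le u_\rho<\delta\}}\|_{L^2(Q)},
\]
and the last norm equals $|\{-\delta\le u_\rho<\delta\}|^{1/2}$. Bounding the measure of this set by $|Q|$ gives the final estimate.

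The only delicate point is obtaining the sharp constant $\delta$ rather than $2\delta$ in front of the indicator. A naive estimate $|\delta-u^*|\le|\delta-u_\rho|+|u_\rho-u^*|$ would only give $2\delta$. This is why the sub-case split on whether $u^*\ge\delta$ or $u^*<\delta$ must be carried out, instead of passing through $u_\rho$.
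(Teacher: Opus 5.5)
Your proposal is correct and follows essentially the same route as the paper. Both arguments use a pointwise case analysis on $u_\rho$ relative to $\pm\delta$, with a split on whether $u^*\ge\delta$ in the transition range, and then the triangle inequality in $L^2(Q)$ together with $\|\mathbf 1_A\|_{L^2(Q)}=|A|^{1/2}$. The differences are minor: your cases are ordered differently, and you explicitly check the ``either/or'' alternative via $|u_\rho^+-u_\rho|=\delta-u_\rho\le 2\delta$ on the transition set, which the paper's proof leaves implicit.
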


\begin{proof}
Fix a point \((x,t)\in Q\), and write \(s=u_\rho(x,t)\) and \(c=u^*(x,t)\).
Since \(c\ge0\), the scalar cut-off \(P_\delta(s)=\max\{s,\delta\}\) satisfies
\[
|P_\delta(s)-c|
\le
|s-c|
+
\delta\,\mathbf 1_{\{-\delta\le s<\delta\}} .
\]
Indeed, if \(s\ge\delta\), equality holds. If \(-\delta\le s<\delta\), then \(P_\delta(s)=\delta\). If \(c\ge \delta\),
then
\[
|\delta-c|=c-\delta\le c-s=|s-c|.
\]
If \(0\le c<\delta\), then
\[
|\delta-c|=\delta-c\le \delta\le |s-c|+\delta .
\]
Thus the stated bound holds in the transition region. If \(s<-\delta\), then \(P_\delta(s)=\delta\), and
because \(c\ge0\), one has
\[
|\delta-c|\le |s-c|.
\]
Returning to \(s=u_\rho(x,t)\) and \(c=u^*(x,t)\) gives the pointwise estimate.
Taking the \(L^2(Q)\)-norm and using the triangle inequality yields
\[
\|u_\rho^+-u^*\|_{L^2(Q)}
\le
\|u_\rho-u^*\|_{L^2(Q)}
+
\delta |\{-\delta\le u_\rho<\delta\}|^{1/2}.
\]
The final estimate follows from
\[
|\{-\delta\le u_\rho<\delta\}|\le |Q|.
\]
\end{proof}

The cut-off map \(P_\delta\) is non-smooth, and the residual derivatives in the implementation are still evaluated from the raw network output. Therefore, Proposition \ref{cut-off} should be interpreted only as a value-level stability statement, not as a full \(V\)-norm residual estimate for \(u_\rho^+\). It shows that the cut-off introduces at most an \(O(\delta)\) perturbation near zero, while strongly negative values are moved toward the physically admissible region without increasing the pointwise error with respect to the nonnegative exact
concentration.

Combined with Proposition \ref{prop:raw-charge-defect}, the following result shows that the charge defect after cut-off consists of the raw least-squares compatibility defect plus a controlled value-level cut-off perturbation.

\begin{proposition}[Charge deviation induced by the value-level cut-off]
\label{prop:charge-cutoff}
Fix a time \(t\in[0,T]\), and suppress the time variable in the notation. Let $\bar c_i$ be the raw RaNN approximation of the $i$-th concentration,
and let
\[
c_i^+ = \sigma_\delta(\bar c_i):=\max\{\bar c_i,\delta\},
\qquad \delta>0 .
\]
Assume that the exact concentration satisfies $c_i\ge 0$ a.e. in $\Omega$.
Then, pointwise in $\Omega$, the following alternatives hold:
\[
\text{either}\quad |c_i^+ - \bar c_i|\le 2\delta,
\qquad
\text{or}\quad |c_i^+ - c_i|\le |\bar c_i-c_i|.
\]
More precisely,
\[
|c_i^+ - c_i|
\le
|\bar c_i-c_i|
+\delta\,\mathbf 1_{\{0\le \bar c_i<\delta\}}
+2\delta\,\mathbf 1_{\{-\delta\le \bar c_i<0\}} .
\]
Consequently, if the exact concentrations satisfy the charge-neutrality condition
\[
\sum_{i=1}^N z_i\int_\Omega c_i\,dx=0,
\]
then the total charge of the cut-off concentrations satisfies
\[
\left|
\sum_{i=1}^N z_i\int_\Omega c_i^+\,dx
\right|
\le
\sum_{i=1}^N |z_i|\|\bar c_i-c_i\|_{L^1(\Omega)}
+
\sum_{i=1}^N |z_i|
\left(
\delta |\{0\le \bar c_i<\delta\}|
+
2\delta |\{-\delta\le \bar c_i<0\}|
\right).
\]
In particular,
\[
\left|
\sum_{i=1}^N z_i\int_\Omega c_i^+\,dx
\right|
\le
\sum_{i=1}^N |z_i|\|\bar c_i-c_i\|_{L^1(\Omega)}
+
2\delta |\Omega|\sum_{i=1}^N |z_i|.
\]
\end{proposition}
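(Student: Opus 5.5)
The plan is to prove a pointwise scalar inequality by case analysis on $s=\bar c_i(x)$, using only $c=c_i(x)\ge 0$, and then to integrate and sum with the valences, exactly as in Proposition~\ref{prop:raw-charge-defect}. The first alternative ("either/or") follows from the refined bound: if $s\ge\delta$ then $c_i^+=s$ and both errors coincide; if $s<-\delta$ then $c_i^+=\delta$ and $|\delta-c|\le|s-c|$ because $s<0\le c$ and $\delta>s$, with $\delta-c\le \delta-s$ when $c\le\delta$ and $c-\delta\le c-s$ when $c\ge\delta$; if $-\delta\le s<\delta$ then $|c_i^+-\bar c_i|=\delta-s\le 2\delta$. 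So the dichotomy reduces to the same three regimes, and I would record it directly from them.

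For the refined pointwise estimate I would split the transition region into two pieces. \emph{Case $0\le s<\delta$:} here $c_i^+=\delta$, and $|\delta-c|\le|\delta-s|+|s-c|\le \delta+|s-c|$ by the triangle inequality (or more sharply, as in Proposition~\ref{cut-off}, $|\delta-c|\le|s-c|$ if $c\ge\delta$ and $\le\delta$ if $c<\delta$). This yields the term $\delta\,\mathbf 1_{\{0\le\bar c_i<\delta\}}$. \emph{Case $-\delta\le s<0$:} $|\delta-c|\le |\delta-s|+|s-c|\le 2\delta+|s-c|$, which gives the coefficient $2\delta$. The estimate of Proposition~\ref{cut-off} would actually give $\delta$ here too, but the stated weaker form with $2\delta$ is what I need and is immediate, so I will not try to sharpen it. The cases $s\ge\delta$ and $s<-\delta$ contribute no extra term, as shown above.

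For the charge estimate I would use neutrality to write
\[
\sum_{i} z_i\int_\Omega c_i^+\,dx=\sum_i z_i\int_\Omega (c_i^+-c_i)\,dx,
\]
then bound the absolute value by $\sum_i|z_i|\int_\Omega|c_i^+-c_i|\,dx$ and insert the pointwise bound. Integrating the indicators produces the measures $|\{0\le\bar c_i<\delta\}|$ and $|\{-\delta\le\bar c_i<0\}|$. For the final "in particular" bound, the two sets are disjoint subsets of $\Omega$, so $\delta|A|+2\delta|B|\le 2\delta(|A|+|B|)\le 2\delta|\Omega|$.

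There is no real obstacle here. The only point needing care is the bookkeeping of the case boundaries, $s=\delta$, $s=0$, and $s=-\delta$, so that the indicator sets match those in the statement and the negative region $s<-\delta$ is correctly shown to incur no penalty. That last point uses $c\ge 0$ essentially.
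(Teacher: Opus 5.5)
Your proposal is correct and follows essentially the paper's route: a pointwise case bound, then charge neutrality, the triangle inequality, integration of the indicators, and $|\{0\le\bar c_i<\delta\}|+|\{-\delta\le\bar c_i<0\}|\le|\Omega|$. You differ only in deriving the transition-band bound directly from $|\delta-c|\le|\delta-s|+|s-c|$ rather than quoting the sharper scalar estimate of Proposition~\ref{cut-off}, and in checking the either/or dichotomy explicitly, which the paper leaves implicit.

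One small slip: in the subcase $s<-\delta$, $c\le\delta$, you need $\delta-c\le c-s=|s-c|$, not $\delta-c\le\delta-s$. The right chain is $\delta-c\le\delta<-s\le c-s$, and this is exactly where $s<-\delta$ and $c\ge0$ are used.
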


\begin{proof}
Applying the scalar estimate from Proposition~\ref{cut-off} pointwise to each
species gives
\[
|c_i^+ - c_i|
\le
|\bar c_i-c_i|
+\delta\,\mathbf 1_{\{0\le \bar c_i<\delta\}}
+2\delta\,\mathbf 1_{\{-\delta\le \bar c_i<0\}} .
\]
Using the charge neutrality of the exact solution,
\[
\sum_{i=1}^N z_i\int_\Omega c_i\,dx=0,
\]
we obtain
\[
\left|
\sum_{i=1}^N z_i\int_\Omega c_i^+\,dx
\right|
=
\left|
\sum_{i=1}^N z_i\int_\Omega (c_i^+-c_i)\,dx
\right|
\le
\sum_{i=1}^N |z_i|\int_\Omega |c_i^+-c_i|\,dx .
\]
Substituting the pointwise bound gives the first estimate. The simplified bound
follows from
\[
|\{0\le \bar c_i<\delta\}|+|\{-\delta\le \bar c_i<0\}|
\le |\Omega|.
\]
\end{proof}

Proposition~\ref{prop:charge-cutoff} shows that the additional charge defect introduced by the positivity
cut-off is controlled by the size of the cut-off perturbation. More precisely,
if
\[
\mathcal Q_\rho(t)
=
\int_\Omega \sum_{i=1}^N z_i\bar c_{i,\rho}(x,t)\,dx,
\qquad
\mathcal Q_\rho^+(t)
=
\int_\Omega \sum_{i=1}^N z_i c_{i,\rho}^+(x,t)\,dx,
\]
then
\[
|\mathcal Q_\rho^+(t)|
\le
|\mathcal Q_\rho(t)|
+
|\Omega|^{1/2}
\sum_{i=1}^N |z_i|
\|c_{i,\rho}^+(\cdot,t)-\bar c_{i,\rho}(\cdot,t)\|_{L^2(\Omega)} .
\]
Hence, if the raw RaNN approximation has a small charge-neutrality defect
and the cut-off perturbation is small, then the cut-off concentration also
has a small charge-neutrality defect.

\subsubsection{Discrete-time mass correction}
We next consider the mass correction step. In the algorithm, the scaling factors are computed at selected correction instants and then interpolated in time. Therefore, the natural property to prove is exact mass matching at those correction instants, together with a stability estimate for the corrected value field.

\begin{proposition}[Exact mass matching at the correction instants]\label{mass-cor}
    Let $t_0<t_1<\cdots<t_J$ be the selected correction instants. Assume that $u_\rho^+\ge \delta>0$ a.e. in $\Omega\times I$. Define
    \[ 
    M^j_\rho:=\int_\Omega u^+_\rho(x,t_j)\,dx,
    \qquad
    M^j:=\int_\Omega u^*(x,t_j)\,dx=\int_\Omega u^*(x,0)\,dx.
    \]
    Then $M_\rho^j\ge \delta |\Omega|>0$, and hence the scaling factor
    \[ 
    \gamma^j:=\frac{M^j}{M^j_\rho}
    \]
    is well defined. Let $\gamma(t)$ be an interpolation of $\{\gamma^j\}_{j=0}^J$, and define
    \[ 
    \tilde u_\rho(x,t):=\gamma(t)u^+_\rho(x,t).
    \]
    Then
    \[ 
    \int_\Omega \tilde u_\rho(x,t_j)\,dx=\int_\Omega u^*(x,t_j)\,dx,
    \qquad j=0,\ldots,J.
    \]
\end{proposition}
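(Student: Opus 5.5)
The proof is a direct computation, split into positivity of the discrete masses and the matching identity at each correction instant.

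First, I will show $M_\rho^j\ge\delta|\Omega|$. Since $u_\rho^+\ge\delta$ a.e. in $\Omega\times I$, and $u_\rho^+$ is a cut-off of a continuous network output (hence continuous in $t$), the bound holds for a.e. $x\in\Omega$ at each fixed $t_j$. Integrating over $\Omega$ gives $M_\rho^j\ge\delta|\Omega|>0$. If one wants to avoid continuity, one can use instead that $u_\rho^+=\max\{\bar u_\rho,\delta\}$ is defined pointwise everywhere, so $u_\rho^+(\cdot,t_j)\ge\delta$ on all of $\Omega$ by construction. This is the only slightly delicate point, since a.e. statements in space-time do not automatically restrict to the time slices $\{t=t_j\}$; I expect it to be the main (minor) obstacle, and I will handle it by appealing to the pointwise definition of the cut-off. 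Consequently $\gamma^j=M^j/M_\rho^j$ is well defined. It is also nonnegative whenever $u^*\ge0$, and strictly positive if $M^j>0$.

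Second, I will use that $\gamma(t)$ is an interpolant of $\{\gamma^j\}$, so $\gamma(t_j)=\gamma^j$. Because $\gamma(t_j)$ is a constant in space, linearity of the integral gives
\[
\int_\Omega\tilde u_\rho(x,t_j)\,dx=\gamma^j\int_\Omega u_\rho^+(x,t_j)\,dx=\frac{M^j}{M_\rho^j}M_\rho^j=M^j=\int_\Omega u^*(x,t_j)\,dx .
\]
This holds for every $j=0,\ldots,J$. The additional identity $M^j=\int_\Omega u^*(x,0)\,dx$ is part of the hypothesis: mass conservation of the exact solution under the homogeneous no-flux condition. It is only needed to identify the target with the initial mass, as done in the algorithm.

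Finally, I will remark that nothing is claimed between correction instants, since $\gamma(t)$ need not equal $M(t)/M_\rho(t)$ there.
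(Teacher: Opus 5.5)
Your proof is correct and follows the paper's argument. The lower bound $M_\rho^j\ge\delta|\Omega|$ comes from the cut-off, and the matching identity follows from $\gamma(t_j)=\gamma^j$ together with linearity of the spatial integral. Your extra care in passing from an a.e.\ space-time bound to the slices $\{t=t_j\}$ is a legitimate refinement of the paper's one-line justification: you use either the pointwise definition of $\max\{\bar u_\rho,\delta\}$ or continuity of the network output.
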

\begin{proof}
The lower bound $M_\rho^j\ge \delta|\Omega|$ follows from the cut-off, so $\gamma^j$ is well defined. By construction,
\[
\int_\Omega \tilde u_\rho(x,t_j)\,dx
=
\gamma^j\int_\Omega u_\rho^+(x,t_j)\,dx
=
\frac{M^j}{M_\rho^j}M_\rho^j
=
M^j.
\]
\end{proof}

If $\gamma \in L^{\infty}(0,T)$, then the corrected value field satisfies an $L^2$-stability estimate of the form
$$\Vert \tilde u_\rho-u^*\Vert_{L^2(Q)}\le \Vert \gamma\Vert_{L^{\infty}(0,T)} \Vert u^+_\rho-u^*\Vert_{L^2(Q)}+\Vert\gamma-1\Vert_{L^{\infty}(0,T)}\Vert u^*\Vert_{L^2(Q)}.$$
Since $\gamma(t)$ is obtained by interpolation from the discrete values $\gamma^j$, the construction enforces exact mass matching only at the selected correction instants, while between two adjacent correction instants the mass is only approximately preserved. The same argument applies to the final mass correction after the additional Nernst-Planck solve driven by the SAV-scaled potential.

\subsubsection{Monotonicity of the SAV auxiliary variable}
In this subsection, we analyze the ideal SAV update defined by the discrete auxiliary-variable relation. Any additional truncation or reset used in implementation for numerical robustness is not included in the proposition below.

\begin{proposition}[Monotonicity of the SAV auxiliary variable]
Let $\mathcal E^{j+1}$ and $\mathcal D^{j+1}$ denote the discrete energy
and dissipation quantities evaluated from the fields used in the ideal SAV
update, and assume that $\mathcal D^{j+1}\ge0$ and
$\mathcal E^{j+1}+C_0>0$. Let the auxiliary variable $R^j$ be updated by
\[
\frac{R^{j+1}-R^j}{\Delta t_j}
=
-\frac{R^{j+1}}{\mathcal E^{j+1}+C_0}\mathcal D^{j+1}.
\]
If $R^j\ge0$, then
\[
R^{j+1}
=
\frac{R^j}
{1+\Delta t_j\mathcal D^{j+1}/(\mathcal E^{j+1}+C_0)}
\le R^j.
\]
Consequently, the SAV auxiliary sequence is nonincreasing.
\end{proposition}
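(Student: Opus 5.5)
The argument is elementary: solve the implicit SAV relation explicitly for $R^{j+1}$ and compare it with $R^j$. Throughout I write $\kappa_j:=\Delta t_j\mathcal D^{j+1}/(\mathcal E^{j+1}+C_0)$. The hypotheses $\mathcal D^{j+1}\ge0$ and $\mathcal E^{j+1}+C_0>0$, together with $\Delta t_j>0$ (the correction instants are ordered), give $\kappa_j\ge0$.

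\textbf{Step 1: solve for $R^{j+1}$.} Multiply the defining relation by $\Delta t_j$ and collect the $R^{j+1}$ terms on the left, which gives
\[
(1+\kappa_j)R^{j+1}=R^j .
\]
Since $1+\kappa_j\ge1>0$, we may divide and obtain the displayed closed form $R^{j+1}=R^j/(1+\kappa_j)$. This also shows that the implicit update is uniquely solvable at each step, so no fixed-point argument is needed.

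\textbf{Step 2: compare with $R^j$.} Using $R^j\ge0$ and $0<1/(1+\kappa_j)\le1$, we get
\[
R^{j+1}-R^j=-\frac{\kappa_j}{1+\kappa_j}R^j\le0 .
\]
Hence $R^{j+1}\le R^j$. As a byproduct, $R^{j+1}\ge0$.

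\textbf{Step 3: conclude by induction on $j$.} Start from $R^0=\mathcal E^0+C_0>0$. By Step 2, nonnegativity propagates from $R^j$ to $R^{j+1}$, so the hypothesis $R^j\ge0$ holds at every index. The one-step inequality then applies at every $j$, and the whole sequence is nonincreasing (and stays positive, since $R^{j+1}=R^j/(1+\kappa_j)$ with $R^0>0$).

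There is no genuine obstacle. The only points needing care are keeping $1+\kappa_j$ strictly positive, which uses the strict inequality $\mathcal E^{j+1}+C_0>0$, and noting that the reset safeguard of Remark~\ref{xi} is excluded from this ideal update.
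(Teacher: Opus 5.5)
Your proposal is correct and follows the same route as the paper, which rearranges the implicit relation into $R^{j+1}=R^j/(1+\Delta t_j\mathcal D^{j+1}/(\mathcal E^{j+1}+C_0))$ and then uses $\mathcal D^{j+1}\ge0$ and $\mathcal E^{j+1}+C_0>0$ to get $R^{j+1}\le R^j$. Your induction from $R^0=\mathcal E^0+C_0>0$, which propagates the hypothesis $R^j\ge0$ to every index, makes explicit a step the paper leaves implicit when it asserts that the whole sequence is nonincreasing.
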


This proposition concerns only the ideal SAV auxiliary update. It does not imply monotonicity of the reported physical free energy based on the final potential unless an additional consistency estimate between the auxiliary variable and the reported free-energy functional is available. It also does not cover implementation-level safeguards on \(\xi^j\) or the final Poisson update used in the reported free-energy evaluation.

\begin{proposition}[Value-field deviation induced by post-processing]
    Let $\tilde{c}_{i,\rho}=\gamma_i(t)c^+_{i,\rho}$ and $\tilde{\phi}_{\rho}=\xi(t)\bar{\phi}_{\rho}$. If $\gamma_i,\ \xi\in L^{\infty}(0,T)$, then 
    $$\Vert \tilde{c}_{i,\rho}-\bar{c}_{i,\rho}\Vert_{L^2(Q)}\le \Vert \gamma_i\Vert_{L^{\infty}}\Vert c^+_{i,\rho}-\bar{c}_{i,\rho}\Vert_{L^2(Q)}+\Vert \gamma_i-1\Vert_{L^{\infty}}\Vert \bar{c}_{i,\rho}\Vert_{L^2(Q)},$$
    and
    $$\Vert \tilde{\phi}_{\rho}-\bar{\phi}_{\rho}\Vert_{L^2(Q)}\le \Vert \xi-1\Vert_{L^{\infty}}\Vert \bar{\phi}_{\rho}\Vert_{L^2(Q)}.$$
\end{proposition}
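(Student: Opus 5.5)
The plan is a direct algebraic splitting, using only that $\gamma_i(t)$ and $\xi(t)$ depend on time alone, so multiplication by them is a bounded operator on $L^2(Q)$ with norm at most the $L^\infty(0,T)$ norm. For the concentration estimate, we would insert and subtract $\gamma_i(t)\bar c_{i,\rho}$:
\[
\tilde c_{i,\rho}-\bar c_{i,\rho}
=\gamma_i(t)\bigl(c^+_{i,\rho}-\bar c_{i,\rho}\bigr)+\bigl(\gamma_i(t)-1\bigr)\bar c_{i,\rho}.
\]
Then we take $L^2(Q)$ norms and apply the triangle inequality. For each term we use the pointwise-in-time bound $|\gamma_i(t)w(x,t)|\le \Vert\gamma_i\Vert_{L^\infty(0,T)}|w(x,t)|$ a.e. in $Q$, and the analogous bound with $\gamma_i-1$. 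Squaring and integrating over $Q$ then gives exactly the stated inequality. This is the same computation as the $L^2$-stability estimate displayed after Proposition \ref{mass-cor}, with $u^*$ replaced by $\bar c_{i,\rho}$.

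For the potential, we would write $\tilde\phi_\rho-\bar\phi_\rho=(\xi(t)-1)\bar\phi_\rho$ and bound it pointwise by $\Vert\xi-1\Vert_{L^\infty}|\bar\phi_\rho|$. Integrating over $Q$ gives the claim.

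The only point needing a remark is well-definedness. The norms on the right-hand side must be finite, and this holds because $\bar c_{i,\rho}$ and $\bar\phi_\rho$ are finite linear combinations of bounded activation features (Assumption \ref{activation function}) on the bounded domain $Q$. Also, $c^+_{i,\rho}=\max\{\bar c_{i,\rho},\delta\}$ satisfies $|c^+_{i,\rho}-\bar c_{i,\rho}|\le \delta+|\bar c_{i,\rho}|$, so it lies in $L^2(Q)$. The interpolants $\gamma_i$ and $\xi$ are measurable functions of $t$ by construction, so the products are measurable. There is no genuine obstacle. The only care needed is to keep the factor $\Vert\gamma_i\Vert_{L^\infty}$ attached to the cut-off term rather than to $\bar c_{i,\rho}$, which is what the choice of splitting above ensures. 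One may also optionally combine the result with Proposition \ref{cut-off} to bound $\Vert c^+_{i,\rho}-\bar c_{i,\rho}\Vert_{L^2(Q)}\le \delta|Q|^{1/2}+\Vert\bar c_{i,\rho}^-\Vert_{L^2(Q)}$, but this is not required for the statement.
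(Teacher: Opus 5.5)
Your proposal is correct and follows essentially the same route as the paper: the splitting $\tilde c_{i,\rho}-\bar c_{i,\rho}=\gamma_i(t)(c^+_{i,\rho}-\bar c_{i,\rho})+(\gamma_i(t)-1)\bar c_{i,\rho}$ with the triangle inequality, and $\tilde\phi_\rho-\bar\phi_\rho=(\xi(t)-1)\bar\phi_\rho$ for the potential. Your extra remarks on finiteness and measurability are harmless additions that the paper leaves implicit. The optional bound on $\Vert c^+_{i,\rho}-\bar c_{i,\rho}\Vert_{L^2(Q)}$ follows directly from $(\delta-\bar c_{i,\rho})_+\le\delta+\bar c_{i,\rho}^-$ rather than from Proposition~\ref{cut-off}, but you do not need it for the statement.
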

\begin{proof}
    For the concentration variable,
    \[
    \tilde c_{i,\rho}-\bar c_{i,\rho}
    =
    \gamma_i(t)(c_{i,\rho}^+-\bar c_{i,\rho})
    +
    (\gamma_i(t)-1)\bar c_{i,\rho}.
    \]
    Taking the \(L^2(Q)\)-norm and using the triangle inequality gives the first estimate. The estimate for the potential follows directly from
    \[
    \tilde\phi_\rho-\bar\phi_\rho=(\xi(t)-1)\bar\phi_\rho.
    \]
\end{proof}

\subsection{Divergence-free approximation and Oseen-type analysis for the PNP-NS system}\label{pnpns-error}
In this subsection, we analyze the velocity approximation and the corresponding linearized Navier–Stokes subproblem appearing in the PNP-NS system. In contrast to the previous subsection, the key additional structure here is incompressibility. Since the SP-RaNN construction satisfies the divergence-free constraint pointwise by construction, we restrict the analysis to a divergence-free trial space. This avoids the mismatch between the numerical ansatz and the abstract function space, and is also consistent with the structure-preserving design of the algorithm.

\subsubsection{Approximation property of the SP-RaNN}
We first show that the divergence-free randomized neural network is capable of approximating divergence-free velocity fields. The key idea is to represent the target velocity through a stream function in two dimensions or a vector potential in three dimensions, and then approximate the potential by a standard RaNN. The divergence-free approximation is then recovered by differentiation.

\begin{theorem}\label{sp-rann-app}
Assume that the target velocity field is spatially divergence-free and admits a sufficiently regular potential representation. The existence of such a representation depends on the topology of $\Omega$ and on the relevant boundary-flux compatibility conditions; here it is imposed as an explicit assumption. In two dimensions, assume that there exists a stream function \(v(\cdot,t)\) such that
\[
\mathbf u(\cdot,t)=\nabla^\perp v(\cdot,t),
\]
with \(v\in H^s(Q)\). In three dimensions, assume that there exists a vector
potential \(\mathbf v(\cdot,t)\) such that
\[
\mathbf u(\cdot,t)=\nabla\times \mathbf v(\cdot,t),
\]
with \(\mathbf v\in H^s(Q)^3\).
Then there exists an SP-RaNN approximation \(\mathbf u_\rho\) such that
\[
\nabla\cdot \mathbf u_\rho=0
\quad\text{pointwise in }Q,
\]
and
\[
\|\mathbf u-\mathbf u_\rho\|_{H^{s-1}(Q)}
\le
C_{\rm sp}
\inf_{\mathbf w_\rho\in\mathcal N_\rho^{\rm pot}}
\|\mathbf w-\mathbf w_\rho\|_{H^s(Q)} .
\]
Here $\mathbf{w}=v$ in two dimensions, interpreted as a scalar potential,
and $\mathbf{w}=\mathbf{v}$ in three dimensions, interpreted as a vector
potential. The space $\mathcal{N}^{\rm pot}_\rho$ denotes the corresponding
scalar- or vector-potential RaNN space. The constant $C_{\rm sp}>0$ is the
operator norm of the spatial map $v\mapsto\nabla^\perp v$ in two dimensions or
$\mathbf v\mapsto\nabla\times\mathbf v$ in three dimensions from $H^s(Q)$
to $H^{s-1}(Q)$; in particular, it depends only on $s$, the dimension, the
domain and the norm convention, but is independent of the network width, the
random hidden-layer parameters and the output-layer coefficients. In particular, under the approximation theorem of \cite{Dang2024}, $\mathbf{u}_\rho$ approximates $\mathbf{u}$ in $H^{s-1}(Q)$ with high probability as the network width increases.
\end{theorem}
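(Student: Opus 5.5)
The plan is to transfer approximation from the potential level to the velocity level through a bounded linear differential operator. The argument rests on two facts: the SP-RaNN space is exactly the image of a potential RaNN space under the spatial map $\nabla^\perp$ (in 2D) or $\nabla\times$ (in 3D), and this map is bounded from $H^s(Q)$ to $H^{s-1}(Q)$.

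\textbf{Step 1: identify the SP-RaNN space as an image.} In two dimensions, take the scalar potential features $\Theta^j(h^j(\mathbf x,t))$ with $(\Theta^j)'=\rho^j$. A direct computation gives
\[
\nabla^\perp\Theta^j(h^j)=(\partial_y,-\partial_x)\Theta^j(h^j)=(w_2^j\rho^j(h^j),\,-w_1^j\rho^j(h^j))=\bm\phi^j .
\]
The sign convention for $\nabla^\perp$ is fixed to match the paper's basis. In three dimensions, the curl-type identity recalled in Section~\ref{RaNN} gives $\nabla\times A^j=\bm\phi^j$. Define $\mathcal N_\rho^{\rm pot}$ as the span of the potential features, scalar in 2D and vector-valued in 3D. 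By linearity, every $\mathbf w_\rho=\sum_j\alpha_j\Theta^j$ (or $\sum_j\alpha_jA^j$) in $\mathcal N_\rho^{\rm pot}$ yields $\mathbf u_\rho:=\nabla^\perp\mathbf w_\rho$ (or $\nabla\times\mathbf w_\rho$), which is an SP-RaNN with the same coefficients. The pointwise identity $\nabla\cdot\nabla^\perp=0$, respectively $\nabla\cdot\nabla\times=0$, holds because mixed spatial partial derivatives of the smooth features commute. Time enters only as a parameter, so divergence-freeness holds pointwise in $Q$.

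\textbf{Step 2: boundedness of the differential map.} Let $\mathcal K$ denote $\nabla^\perp$ or $\nabla\times$, acting only in the spatial variables. Each component of $\mathcal K\mathbf w$ is a first-order spatial partial derivative of a component of $\mathbf w$. Since $\partial_{x_k}:H^s(Q)\to H^{s-1}(Q)$ is bounded, we get $\|\mathcal K\mathbf w\|_{H^{s-1}(Q)}\le C_{\rm sp}\|\mathbf w\|_{H^s(Q)}$. The constant depends only on $s$, $d$, $Q$ and the norm convention, not on the network. For integer $s\ge1$ this is immediate from the definition of the norm. For non-integer $s$ it follows by interpolation, or from the Sobolev–Slobodeckij characterization on the Lipschitz cylinder $Q$.

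\textbf{Step 3: conclude.} By assumption $\mathbf u=\mathcal K\mathbf w$. For any $\mathbf w_\rho\in\mathcal N_\rho^{\rm pot}$, set $\mathbf u_\rho=\mathcal K\mathbf w_\rho$. Then
\[
\|\mathbf u-\mathbf u_\rho\|_{H^{s-1}(Q)}=\|\mathcal K(\mathbf w-\mathbf w_\rho)\|_{H^{s-1}(Q)}\le C_{\rm sp}\|\mathbf w-\mathbf w_\rho\|_{H^s(Q)}.
\]
The infimum over $\mathcal N_\rho^{\rm pot}$ is taken up to an arbitrary $\varepsilon>0$, because the span is finite-dimensional and the infimum is in fact attained. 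The high-probability statement follows by applying \cite[Theorem 4.12]{Dang2024} to the potential $\mathbf w\in H^s(Q)$, as in Theorem~\ref{approximation-np-thm}, componentwise in 3D.

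\textbf{Main obstacle.} The delicate point is Step 1 combined with the probabilistic statement. The potential features use the antiderivative activation $\Theta=\int\rho$, not $\rho$ itself. The Dang--Wang approximation theorem must therefore be applied to the network class with activation $\Theta$, and Assumption~\ref{activation function} has to be checked for $\Theta$ rather than $\rho$. My first attempt would be to impose the admissibility assumption directly on $\Theta^j$, so that the cited theorem applies verbatim. The bound as stated, with the infimum over $\mathcal N_\rho^{\rm pot}$, does not depend on this verification; only the final ``in particular'' clause does.
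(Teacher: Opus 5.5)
Your argument follows the paper's proof. You approximate the potential in $H^s(Q)$, apply $\nabla^\perp$ or $\nabla\times$, and use the boundedness of first-order spatial derivatives from $H^s(Q)$ to $H^{s-1}(Q)$; the main inequality is correct. Two of your additions go beyond the written proof and are sound. First, the infimum over the finite-dimensional space $\mathcal N_\rho^{\rm pot}$ is attained, which the statement's ``there exists $\mathbf u_\rho$'' needs; the paper simply takes the infimum. Second, the approximation theorem of \cite{Dang2024} has to be applied with the antiderivative activation $\Theta$, which the paper never checks. For the Gaussian $\rho$ used in the experiments this causes no trouble. $\Theta$ and all its derivatives are bounded, $\Theta^{(k)}=\rho^{(k-1)}\in L^1(\mathbb R)$ for $k\ge1$, and $\widehat{\Theta'}=\hat\rho\neq0$, so (A1)--(A2) hold for $\Theta$.

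There is one inconsistency to fix, in three dimensions. You define $\mathcal N_\rho^{\rm pot}$ as the span of $A^j=(\Theta_1^j(h_1^j),\Theta_2^j(h_2^j),\Theta_3^j(h_3^j))$, with one coefficient $\alpha_j$ per $j$, so that its curl is exactly the $m$-term SP-RaNN of Section~\ref{RaNN}. Your final clause, however, applies \cite{Dang2024} componentwise. That yields three independent coefficient vectors $\alpha^1,\alpha^2,\alpha^3$ with $\sum_j\alpha^i_j\Theta_i^j(h_i^j)\approx v_i$. The resulting vector potential lies in your $\mathcal N_\rho^{\rm pot}$ only if $\alpha^1=\alpha^2=\alpha^3$, and nothing guarantees this.

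The paper's proof avoids the problem by using the untied vector-potential space with $3m$ coefficients, spanned by the fields $\Theta_i^j(h_i^j)\mathbf e_i$. Its curl image is spanned by $\nabla\times(\Theta_i^j(h_i^j)\mathbf e_i)$, each still divergence-free by construction, so componentwise approximation applies verbatim. You can take that larger space as $\mathcal N_\rho^{\rm pot}$, accepting that the resulting SP-RaNN has $3m$ rather than $m$ free coefficients; this mismatch with Section~\ref{RaNN} is left implicit in the paper. Otherwise you need a separate approximation argument for the tied space. As written, the high-probability statement does not follow from your definition.
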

\begin{proof}
    The estimates below use the boundedness of spatial first-order derivatives from \(H^s(\Omega\times I)\) to \(H^{s-1}(\Omega\times I)\).

    \textbf{The two-dimensional case}. By the assumed potential representation, $\mathbf{u}=\nabla^{\perp} v$. From Theorem 4.12 in \cite{Dang2024}, choose $v_\rho=\sum_{i=1}^m \alpha_i\psi_i\in\mathcal N_\rho^{\rm pot}$, we have $\Vert v_\rho-v\Vert_{H^s(Q)}\le \epsilon$. Set $\mathbf u_\rho=\nabla^\perp v_\rho$, we have
\[
\Vert \mathbf{u}-\mathbf{u}_\rho \Vert_{H^{s-1}(Q)}
=
\Vert \nabla^\perp(v-v_\rho)\Vert_{H^{s-1}(Q)}
\le
C_{\rm sp}\Vert v-v_\rho\Vert_{H^{s}(Q)}.
\]

    \textbf{The three-dimensional case}. By the assumed potential representation, $\mathbf{u}=\nabla \times \mathbf{v}$. For $\mathbf{v}=(v_1,\ v_2,\ v_3)$, we can use $v_{\rho,1}=\sum_{i=1}^m \alpha^1_i \psi^1_i,\ v_{\rho,2}=\sum_{i=1}^m \alpha^2_i \psi^2_i,\ v_{\rho,3}=\sum_{i=1}^m \alpha^3_i \psi^3_i$ to approximate $v_1,\ v_2,\ v_3$ separately. It is easy to construct $\mathbf{v}_{\rho}=\sum_{i=1}^{3m}\alpha_i \bm{\psi}_i$ to approximate $\mathbf{v}$ directly, where $\bm{\psi}=(\psi^1,\psi^2,\psi^3)^T$. Let 
    $$\alpha=(\alpha^1_1,...,\alpha^1_m,\alpha^2_1,...,\alpha^2_m,...,\alpha^3_1,...,\alpha^3_m)^T,$$
    $$\psi^1=(\psi^1_1,...,\psi^1_m,\underbrace{0,...,0}_{2m})^T,\ \psi^2=(\underbrace{0,...,0}_{m},\psi^2_1,...,\psi^2_m,\underbrace{0,...,0}_{m})^T,\ \psi^3=(\underbrace{0,...,0}_{2m},\psi^3_1,...,\psi^3_m)^T.$$
    It is clear that the RaNN with vector-valued activation function $\mathbf{v}_{\rho}$ can approximate $\mathbf{v}$, and the approximation ability depends on the approximation ability of the scalar-valued network (Theorem 4.12 in \cite{Dang2024}). Set $\mathbf u_\rho=\nabla\times\mathbf v_\rho$, we have
\[
\Vert \mathbf{u}-\mathbf{u}_\rho \Vert_{H^{s-1}(Q)}
=
\Vert \nabla\times(\mathbf v-\mathbf v_\rho)\Vert_{H^{s-1}(Q)}
\le
C_{\rm sp}\Vert \mathbf v-\mathbf v_\rho\Vert_{H^{s}(Q)}.
\]
Taking the infimum over the corresponding scalar- or vector-potential RaNN space gives the stated estimate.
\end{proof}

The theorem above is stated in terms of potentials only for the purpose of analysis. In the actual implementation, the SP-RaNN basis is used directly, so that the velocity approximation satisfies the spatial divergence-free constraint pointwise by construction (See \cite{Li2026}).

\subsubsection{Oseen-type analysis in a divergence-free trial space}
We next consider the velocity-pressure subproblem in the linearized PNP-NS iteration. Since the SP-RaNN construction enforces the divergence-free constraint pointwise, we formulate the analysis in a divergence-free trial space. On the other hand, the boundary condition is not built into the neural ansatz and is therefore imposed through the residual-based loss rather than as a hard constraint in the function space.

For simplicity, and consistently with the PNP--NS model considered here, we state the conditional Oseen estimate for homogeneous no-slip boundary data.

Define the velocity and pressure spaces
$$V_u = \left\{\mathbf{u}\in L^2(I;H^2(\Omega)^d)\cap H^1(I;L^2(\Omega)^d): \nabla \cdot \mathbf{u}=0 \right\},$$
$$V_p:=L^2(I;H^1(\Omega)),\quad \mathcal{M}_p p(t):=\frac{1}{|\Omega|^{1/2}}\int_\Omega p(x,t)\,dx.$$
We equip $V_u\times V_p$ with the norm $\Vert (u,p)\Vert_{V_u\times V_p}^2:=\Vert\partial_t u\Vert_{L^2(I;L^2)}^2+\Vert u\Vert_{L^2(I;H^2)}^2+\Vert \nabla p\Vert_{L^2(I;L^2)}^2 + \|\mathcal{M}_p p\|_{L^2(I)}^2$.

Let
\[
\mathcal G_O(\mathbf u,p)
:=
\partial_t\mathbf u
+
(\mathbf u^n\cdot\nabla)\mathbf u
-
\nu\Delta\mathbf u
+
\nabla p
-
\mathbf F,
\]
where \(\mathbf F\) denotes the prescribed forcing term in the linearized
velocity equation. Let
\[
\mathcal B_0\mathbf u:=\mathbf u(\cdot,0)-\mathbf u_0,
\qquad
\mathcal B_1\mathbf u:=\mathbf u|_{\partial\Omega},
\]
and
\[
\mathcal M_p p(t)
:=
|\Omega|^{-1/2}\int_\Omega p(x,t)\,dx .
\]
The corresponding continuous residual loss is
\[
\mathcal L_O(\mathbf u,p)
:=
\|\mathcal G_O(\mathbf u,p)\|_Y^2
+
\|\mathcal B_0\mathbf u\|_Z^2
+
\|\mathcal B_1\mathbf u\|_X^2
+
\|\mathcal M_p p\|_{L^2(I)}^2 .
\]
where, for the Oseen problem, we use the residual spaces
\[
Y:=L^2(I;L^2(\Omega)^d),\qquad
Z:=H^1(\Omega)^d,
\]
and
\[
X:=
L^2(I;H^{3/2}(\partial\Omega)^d)
\cap
H^{3/4}(I;L^2(\partial\Omega)^d).
\]
For an error pair \((\mathbf e,r)\), where $\mathbf{e}$ is $\mathbf{u}_\rho-\mathbf{u}^*$, $r$ is $p_\rho-p^*$, define the homogeneous Oseen residual loss
\[
\mathcal L_O^0(\mathbf e,r)
:=
\|\partial_t\mathbf e
+
(\mathbf u^n\cdot\nabla)\mathbf e
-
\nu\Delta\mathbf e
+
\nabla r\|_Y^2
+
\|\mathbf e(\cdot,0)\|_Z^2
+
\|\mathbf e|_{\partial\Omega}\|_X^2
+
\|\mathcal M_p r\|_{L^2(I)}^2 .
\]

\begin{proposition}[Conditional ideal residual estimate for the divergence-free Oseen subproblem]
\label{ns-app}
Let $\mathbf u^n$ be a given divergence-free convection field with sufficient regularity, and let \((\mathbf u^*,p^*)\in V_u\times V_p\) be the exact gauge-fixed solution of the linearized Oseen-type subproblem. Assume that the homogeneous Oseen residual satisfies the graph-norm equivalence
\[
C_L\|(\mathbf e,r)\|_{V_u\times V_p}^2
\le
\mathcal L_O^0(\mathbf e,r)
\le
C_U\|(\mathbf e,r)\|_{V_u\times V_p}^2 .
\]
for all \((\mathbf e,r)\in V_u\times V_p\). Let
\[
(\mathbf u_\rho^{\rm id},p_\rho^{\rm id})
\in
\arg\min_{(\mathbf v_\rho,q_\rho)\in
V_\rho^{\rm div}\times Q_\rho}
\mathcal L_O(\mathbf v_\rho,q_\rho)
\]
be an ideal minimizer of the continuous residual loss over the
divergence-free SP-RaNN velocity space and the RaNN pressure space. If
\[
\inf_{(\mathbf v_\rho,q_\rho)\in
V_\rho^{\rm div}\times Q_\rho}
\|(\mathbf v_\rho-\mathbf u^*,q_\rho-p^*)\|_{V_u\times V_p}
\le
\epsilon_A,
\]
then
\[
\|(\mathbf u_\rho^{\rm id}-\mathbf u^*,
p_\rho^{\rm id}-p^*)\|_{V_u\times V_p}
\le
C\epsilon_A .
\]
\end{proposition}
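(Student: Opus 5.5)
The argument is a Céa-type quasi-optimality estimate for the ideal least-squares minimizer. It mirrors the proof of Corollary~\ref{cor:NP-implemented-conditional}, but uses the continuous loss, so no discrete norming constants appear.

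\textbf{Step 1: express the loss through the homogeneous loss.} The exact gauge-fixed pair $(\mathbf u^*,p^*)$ satisfies
\[
\mathcal G_O(\mathbf u^*,p^*)=0,\qquad \mathbf u^*(\cdot,0)=\mathbf u_0,\qquad \mathbf u^*|_{\partial\Omega}=0,\qquad \mathcal M_pp^*=0 .
\]
All residual operators are affine in $(\mathbf u,p)$, with linear part given by the Oseen operator frozen at $\mathbf u^n$. Hence, for every $(\mathbf v_\rho,q_\rho)\in V_\rho^{\rm div}\times Q_\rho$,
\[
\mathcal L_O(\mathbf v_\rho,q_\rho)=\mathcal L_O^0(\mathbf v_\rho-\mathbf u^*,\,q_\rho-p^*).
\]
For this to be meaningful, the error pair must lie in $V_u\times V_p$. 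The SP-RaNN functions are smooth and divergence-free pointwise by construction, and the RaNN pressure functions are smooth on the bounded domain $Q$. So $\mathbf v_\rho-\mathbf u^*$ is divergence-free with the regularity required by $V_u$, and $q_\rho-p^*\in V_p$. This membership is the point where the divergence-free trial space is genuinely used: the assumed graph-norm equivalence is stated only on $V_u\times V_p$, where the divergence constraint is built in.

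\textbf{Step 2: chain the inequalities.} By minimality of $(\mathbf u_\rho^{\rm id},p_\rho^{\rm id})$, for any competitor $(\mathbf v_\rho,q_\rho)$ in the trial space,
\[
C_L\|(\mathbf u_\rho^{\rm id}-\mathbf u^*,p_\rho^{\rm id}-p^*)\|^2_{V_u\times V_p}
\le \mathcal L_O^0(\mathbf u_\rho^{\rm id}-\mathbf u^*,p_\rho^{\rm id}-p^*)
=\mathcal L_O(\mathbf u_\rho^{\rm id},p_\rho^{\rm id})
\]
\[
\le \mathcal L_O(\mathbf v_\rho,q_\rho)
=\mathcal L_O^0(\mathbf v_\rho-\mathbf u^*,q_\rho-p^*)
\le C_U\|(\mathbf v_\rho-\mathbf u^*,q_\rho-p^*)\|^2_{V_u\times V_p}.
\]

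\textbf{Step 3: pass to the infimum.} Taking the infimum over the trial space and applying the approximation hypothesis gives the claim with $C=(C_U/C_L)^{1/2}$.

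\textbf{Main obstacle: existence of the minimizer.} The statement presupposes the argmin, but it is worth noting why it exists. After the hidden parameters are fixed, $V_\rho^{\rm div}\times Q_\rho$ is finite-dimensional, and $\mathcal L_O$ is a convex quadratic functional on it. A convex quadratic bounded below attains its infimum on a finite-dimensional space, even when it is not strictly convex, so a minimizer always exists. The lower graph-norm bound then shows that any two minimizers differ by a pair of zero norm, so the minimizer is in fact unique.
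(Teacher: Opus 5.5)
Your proposal is correct and follows essentially the same C\'ea-type argument as the paper. Both rewrite the data loss $\mathcal L_O$ as the homogeneous loss $\mathcal L_O^0$ of the error, use minimality, and sandwich between the lower and upper graph-norm bounds. The differences are cosmetic: you pass to the infimum directly and make the constant $C=(C_U/C_L)^{1/2}$ explicit, whereas the paper works with a $2\epsilon_A$ near-optimal competitor. Your remark on existence and uniqueness of the minimizer is a correct extra that the statement does not require.
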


\begin{proof}
Let \((\mathbf v_\rho,q_\rho)\in
V_\rho^{\rm div}\times Q_\rho\) satisfy
\[
\|(\mathbf v_\rho-\mathbf u^*,q_\rho-p^*)\|_{V_u\times V_p}
\le 2\epsilon_A .
\]
Since \((\mathbf u^*,p^*)\) solves the exact linearized Oseen problem, for any
\((\mathbf w,q)\),
the data residual of \((\mathbf w,q)\) equals the homogeneous residual of
\((\mathbf w-\mathbf u^*,q-p^*)\). Hence the ideal minimizer satisfies
\[
\mathcal L_O^0(\mathbf u_\rho^{\rm id}-\mathbf u^*,
p_\rho^{\rm id}-p^*)
\le
\mathcal L_O^0(\mathbf v_\rho-\mathbf u^*,q_\rho-p^*) .
\]
Using the graph-norm equivalence,
\[
\|(\mathbf u_\rho^{\rm id}-\mathbf u^*,
p_\rho^{\rm id}-p^*)\|_{V_u\times V_p}^2
\le
C\mathcal L_O^0(\mathbf u_\rho^{\rm id}-\mathbf u^*,
p_\rho^{\rm id}-p^*)
\]
\[
\le
C\mathcal L_O^0(\mathbf v_\rho-\mathbf u^*,q_\rho-p^*)
\le
C\|(\mathbf v_\rho-\mathbf u^*,q_\rho-p^*)\|_{V_u\times V_p}^2 .
\]
Taking the infimum over
\((\mathbf v_\rho,q_\rho)\in V_\rho^{\rm div}\times Q_\rho\)
gives the result.
\end{proof}

The estimate above is an ideal continuous-residual statement for the linearized Oseen-type subproblem. In the implementation, the continuous residual loss is replaced by a pointwise collocation least-squares loss, so a rigorous passage to the full Oseen graph norm would require additional sampling, quadrature, or
inverse-stability estimates. Thus, the Oseen analysis should be understood as a conditional, stepwise estimate for the linearized velocity-pressure solve. The SP-RaNN ansatz enforces \(\nabla\cdot\mathbf u_\rho=0\) pointwise by
construction, while the no-slip boundary condition and pressure normalization are imposed through residual terms. A full convergence analysis of the coupled PNP--NS outer iteration is left for future work.

\begin{remark}
    The graph-norm equivalence assumed in Proposition \ref{ns-app} is a regularity assumption for the underlying linearized Oseen-type PDE, rather than a consequence of the SP-RaNN ansatz. The SP-RaNN construction is used only to enforce the divergence-free constraint at the representation level. Related regularity results for Stokes and Oseen systems can be found, for example, in Raymond~\cite{Raymond2007}, Shibata--Shimada~\cite{Shibata2007}, and related work of Kajiwara~\cite{Kajiwara2022}. A verification of the specific space-time residual graph-norm equivalence used here is not pursued in this paper.
\end{remark}

\section{Numerical Examples}
In this section, we apply the proposed SO-RaNN method to the PNP and PNP-NS systems. The numerical tests are divided into two groups. Source-driven manufactured examples are used primarily to assess approximation accuracy against known exact solutions. Homogeneous examples, which are more closely aligned with the intrinsic structure of the original PNP/PNP-NS systems, are used to illustrate the intended structure-oriented behavior, including value-level positivity correction, mass matching at selected correction instants, computed physical free-energy curves based on the final potential, and divergence-free approximation. In the present experiments, the positivity cut-off, mass correction, and SAV-based correction are designed as a unified correction framework. However, in the source-driven Examples \ref{pnp-accuracy} and \ref{pnpns-accuracy}, we only apply the positivity and mass corrections to the concentration variables, while no SAV-based correction is imposed on the electric potential, since the corresponding source terms alter the original energy law. Accordingly, these examples are used mainly for accuracy assessment, while the reported positivity and mass-matching behaviors should be understood as partial correction effects rather than illustrations of the full correction mechanism. All algorithms are implemented in Python, and the least-squares systems are solved using \texttt{np.linalg.lstsq}. Numerical experiments are carried out on a workstation equipped with a 13th Gen Intel(R) Core(TM) i7-13700KF CPU at 3.40 GHz and 32 GB of RAM.

In the PNP experiments (Examples \ref{pnp-accuracy}–\ref{pnp-benchmark3}), we use three RaNNs to approximate $c_1,\ c_2$ and $\phi$, with initialization ranges $r_1,\ r_2$ and $r_3$, respectively. In the PNP-NS experiments (Examples \ref{pnpns-accuracy}–\ref{pnpns-benchmark1}), we use four RaNNs to approximate $c_1,\ c_2,\ \phi$ and $p$, with initialization ranges $r_1,\ r_2,\ r_3$ and $r_4$, and one SP-RaNN to approximate $\mathbf{u}$ with initialization range $r_5$. The time-dependent scaling functions $\gamma_i(t)$ are obtained by linear interpolation from the discrete values $\{\gamma_i^j\}$. We select the activation function $\rho(x)=e^{-0.5x^2}$, and choose $\delta=10^{-10}$ for the positivity cut-off map $\sigma_{\delta}$, $\eta=10^{-6},\ \eta_{\phi}=0.3$ and $M_{\rm ite}=30$ as the threshold values and maximum number of iterations in Algorithm \ref{PNP-algorithm} and \ref{PNPNS-algorithm}. The number of basis functions of RaNNs and SP-RaNNs is denoted by $m$, the numbers of collocation points are determined from the measurement-ratio parameter $n_e$ according to Remark \ref{points}, and $20^{d_{\rm st}}$ ($d_{\rm st}$ includes dimensions of time and space) Gauss-Legendre quadrature points are used to calculate numerical integration. For reproducibility, we fix \texttt{torch.manual\_seed(42)} and \texttt{np.random.seed(42)}.

The weights and biases in the hidden layers are generated randomly from the uniform distributions $\mathcal{U}(-r,r)$. For RaNN methods, the initialization strategy determines the basis functions, and thus has a significant impact on the accuracy. Dang and Wang (\cite{Dang2024}) proposed the frequency-based parameter initialization strategy for $r$ of weights, and adjust bias according to $\Omega\times I$. For brevity, we only apply the initialization strategy for bias to concentrate more basis functions within $\Omega\times I$. Specifically, after initializing weight matrix $W\in \mathbb{R}^{m\times d}$, we generate a random matrix $B\in \mathbb{R}^{m\times d}$, then let $\mathbf{b}=-(W \odot B)\cdot \mathbf{1}_{d\times 1}$, where $\odot$ denotes element-wise multiplication. When the time-block strategy is used, the initialization is performed separately within each time block.

\subsection{PNP system}

\begin{example}[Two-component PNP Equations, accuracy test]\label{pnp-accuracy}
In this example, we use the following augmented equations with exact solutions as the test problem
\begin{subequations}\label{PNP-f}
    \begin{align}
        \frac{\partial c_i}{\partial t} &= D_i \nabla \cdot(\nabla c_i + z_i c_i \nabla \phi) + f_i,\ i=1,2,\ {\rm in}\ \Omega\times I, \\
        -\epsilon_p^2 \Delta \phi &= \sum\limits_{i=1}^{N}z_i c_i + g,\ {\rm in}\ \Omega\times I,
    \end{align}
\end{subequations}
where $\Omega\times I=[-1,1]^2\times [0,T]$, $D_1=D_2=1$, $z_1=1,\ z_2=-1$, $\epsilon_p=1$, exact solutions are as follows
\begin{equation}
    \left\{
    \begin{array}{rrll}
        c_1 &=& {\rm sin}(\pi x){\rm sin}(\pi y){\rm sin}(t) + 1.1,  \\
        c_2 &=& -{\rm sin}(\pi x){\rm sin}(\pi y){\rm sin}(t) + 1.1, \\
        \phi &=& \frac{1}{\pi^2}{\rm sin}(\pi x){\rm sin}(\pi y){\rm sin}(t).
    \end{array}
    \right.
\end{equation}
The source terms $f_1,\ f_2,\ g$, boundary and initial conditions can be calculated with the exact solution.
\end{example}

In this source-driven manufactured test, the mass-scaling target at each correction instant is taken as the exact mass computed from the manufactured solution, rather than the initial mass.

First, we set $T=1$, choose $n_e=10,\ r_1=r_2=r_3=2,\ \lambda=100$, the results for different $m$ are summarized in the Table \ref{pnp-ex1-table}, where $e$ is the $L^2$ error at $t=1$, $t$ (CPU) denotes the required CPU time (seconds). Table \ref{pnp-ex1-table} shows that the approximation error decreases as the network width increases in the reported setting. For this source-driven test, the concentration correction steps do not deteriorate the observed approximation accuracy and, in some of the reported configurations, lead to slightly smaller errors. Since the source term alters the original energy law, no SAV-based correction is imposed on the electric potential in this example.

To assess the computational efficiency of the proposed RaNN solver in this manufactured setting, we also implemented a classical finite-difference solver for the same manufactured source--boundary problem. The finite-difference scheme uses second-order central differences in space, a BDF1 start-up followed by a BDF2 time-stepping procedure, explicit extrapolation for the drift term, and a sparse direct solver for the manufactured Poisson equation with exact-solution-induced Neumann boundary data and a mean-zero gauge constraint. The concentration and potential boundary data in this finite-difference comparison are taken from the same manufactured exact solution as in the RaNN least-squares formulation. For a representative run with $h=1.25\times 10^{-2},\ \Delta t=2.5\times 10^{-4}$, the finite-difference method yields $e_{c_1}=e_{c_2}=1.36\times10^{-4}$ and $e_\phi=5.81\times10^{-5}$, with a total CPU time of $37.715$ seconds. By comparison, the SO-RaNN method with $m=800$ in Table \ref{pnp-ex1-table} requires a smaller CPU time ($12.89$ seconds), while achieving smaller errors. This representative comparison suggests that, for this test problem and this particular implementation setting, the SO-RaNN method can be computationally competitive.

Furthermore, we consider the long-time simulation. Set $T=50$, as stated in the Remark \ref{long-time}, we divide $I=[0,50]$ into $I_l=[l-1,l],\ l=1,...,50$, then repeat the algorithm \ref{PNP-algorithm}. The results are shown in Figure \ref{pnp-accuracy-p3} (RaNN) and Figure \ref{pnp-accuracy-p4} (SO-RaNN). Interestingly, compared to Figure \ref{pnp-accuracy-p3}, the results in Figure \ref{pnp-accuracy-p4} remain stable over the tested horizon. The results suggest improved observed robustness over the tested horizon for this manufactured test.

\begin{table}[H]
\centering
\begin{tabular}{cccccccc}
\toprule
Method&$m$&$e_{c_1}$&$e_{c_2}$&$e_{\phi}$&$t$ (CPU)&ite\\
\midrule
\multirow{4}{*}{RaNN}&200 &2.41E-02	&2.79E-02	&1.04E-02	&1.99   &10\\
&400 &1.00E-03	&1.10E-03	&5.41E-04	&4.19  &10\\
&800 &1.11E-05	&1.59E-05	    &1.26E-05	&12.67  &10\\
&1600 &3.47E-07	&8.34E-06	    &2.25E-07	&34.90  &10\\
\midrule
\multirow{4}{*}{SO-RaNN}&200 &2.39E-02	&2.02E-02	&1.04E-02	&2.11	&10\\
&400 &9.92E-04	&7.66E-04	    &5.41E-04	&4.31	&10\\
&800 &7.54E-06	    &1.58E-05	        &1.27E-05	&12.89	&10\\
&1600 &3.02E-07	    &8.34E-07	        &2.26E-07	&35.34	&10\\
\bottomrule
\end{tabular}
\caption{$L^2$ errors at $t=1$ for RaNN and SO-RaNN method with different $m$, where $\ n_e=10,\ r_1=r_2=r_3=2,\ \lambda=100$ in Example \ref{pnp-accuracy}.}
\label{pnp-ex1-table}
\end{table}

\begin{figure}[!htbp] 		
	\centering
	\includegraphics[scale=0.5]{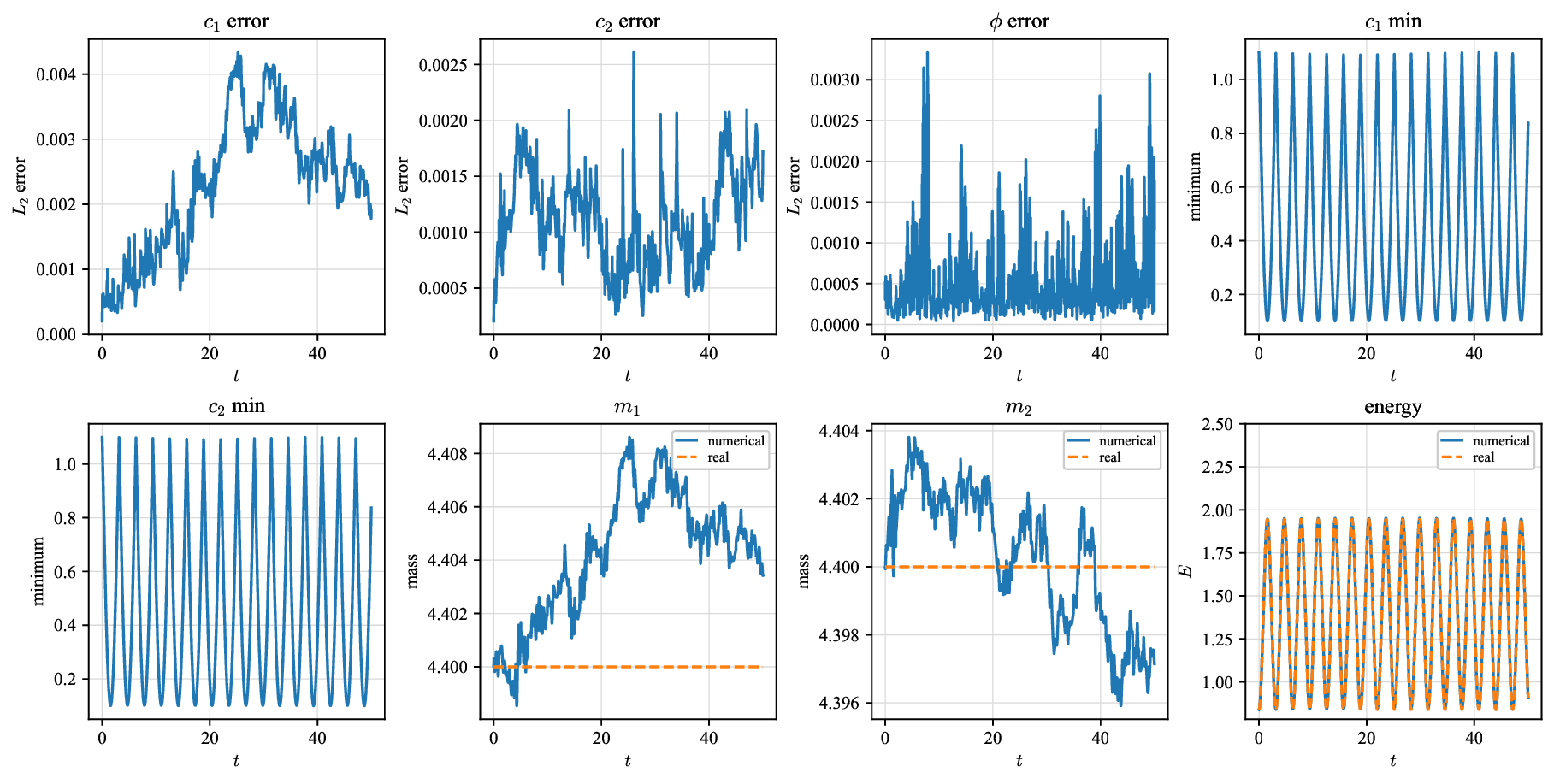}
	\caption{For the RaNN method, we show $L^2$ errors for $c_1,\ c_2,\ \phi$ against the manufactured exact solution, the minimum values and masses of ions, and the free-energy functional for this source-driven augmented test, where $\ m=400,\ \lambda=100,\ n_e=10,\ r_1=r_2=r_3=2$ in Example \ref{pnp-accuracy}.}
	\label{pnp-accuracy-p3}
\end{figure}

\begin{figure}[!htbp] 		
	\centering
	\includegraphics[scale=0.5]{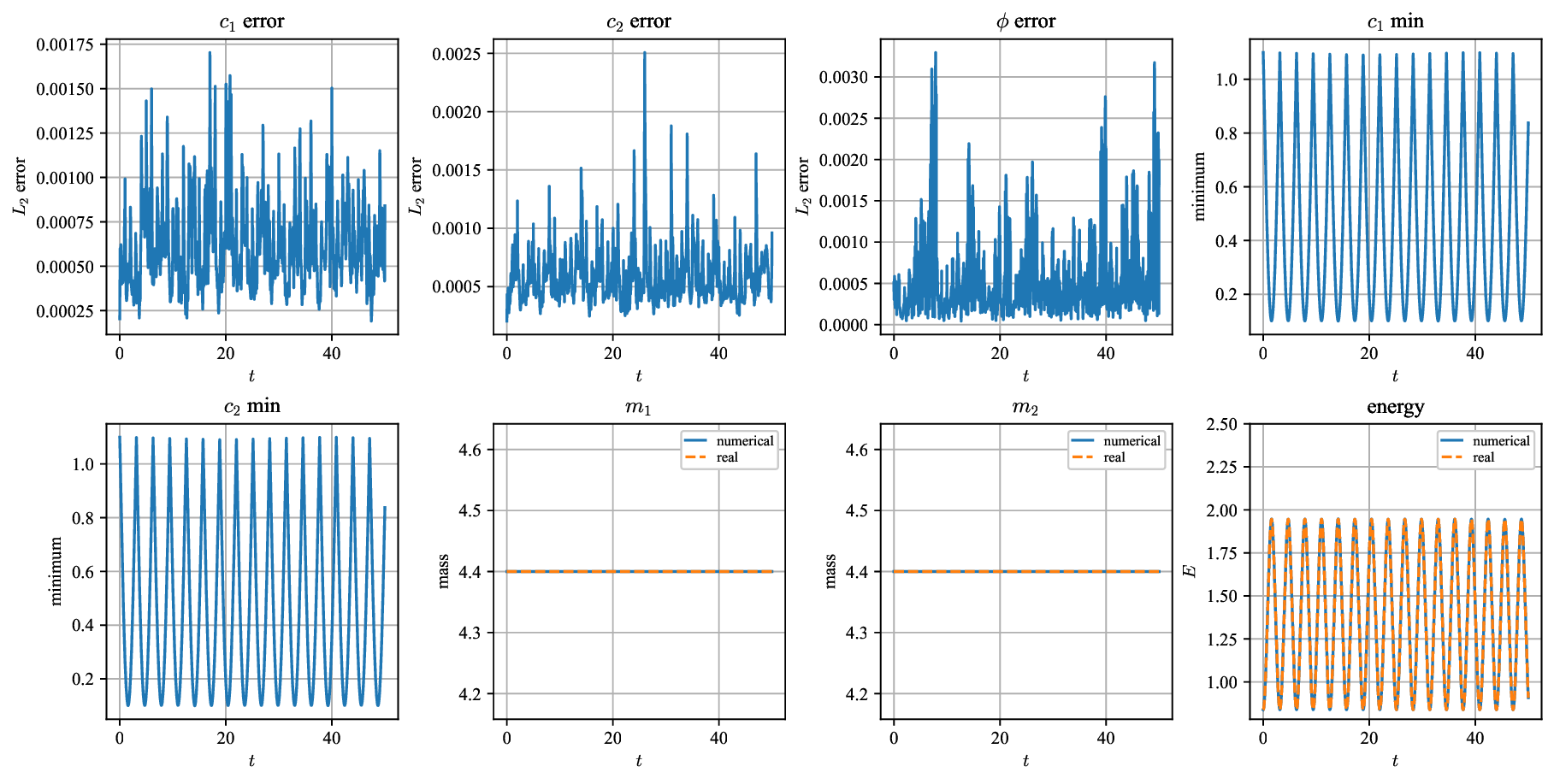}
	\caption{For the SO-RaNN method, we show $L^2$ errors for $c_1,\ c_2,\ \phi$ against the manufactured exact solution, the minimum values and masses of ions, and the free-energy functional for this source-driven augmented test, where $\ m=400,\ \lambda=100,\ n_e=10,\ r_1=r_2=r_3=2$ in Example \ref{pnp-accuracy}.}
	\label{pnp-accuracy-p4}
\end{figure}

\begin{example}[Two-component PNP Equations, benchmark test]\label{pnp-benchmark1}
In this example, we consider the equations \eqref{PNP-model-2} in the domain $\Omega\times I=[0,1]^3$, $D_1=D_2=1$, $z_1=1,\ z_2=-1$, the initial conditions are given by
\begin{equation}
    \left\{
    \begin{array}{rrll}
        c_1(x,y,0) &=& -4(x^2(1-x)^2+y^2(1-y)^2)+\frac{19}{15},  \\
        c_2(x,y,0) &=& -15(x^4(1-x)^4+y^4(1-y)^4)+\frac{22}{21}.
    \end{array}
    \right.
\end{equation}
\end{example}

We test our method for different $\epsilon_p$. First, we set $m=400,\ n_e=20,\ r_1=r_2=r_3=1,\ \lambda=100$, for $\epsilon^2_p=1$, the results of RaNN and SO-RaNN are shown in Figure \ref{pnp-benchmark1-p1} (RaNN) and Figure \ref{pnp-benchmark1-p2} (SO-RaNN). The comparison between Figures \ref{pnp-benchmark1-p1} and \ref{pnp-benchmark1-p2} shows that the correction steps improve positivity and mass behavior in this test. We then change $\epsilon_p$, and divide $I=[0,1]$ into $I_l,\ l=1,2,...,10$, set $m=400,\ n_e=20,\ r_1=r_2=r_3=1,\ \lambda=100$, the results are plotted in Figure \ref{pnp-benchmark1-p3} ($\epsilon^2_p=0.16$), Figure \ref{pnp-benchmark1-p4} ($\epsilon^2_p=0.09$), and Figure \ref{pnp-benchmark1-p5} ($\epsilon^2_p=100$). These results illustrate the intended value-level positivity correction and selected-time mass matching. The free-energy curves, computed from the final numerical solutions, are observed to decay for several values of \(\epsilon_p\).

\begin{figure}[!htbp] 		
	\centering
	\includegraphics[scale=0.6]{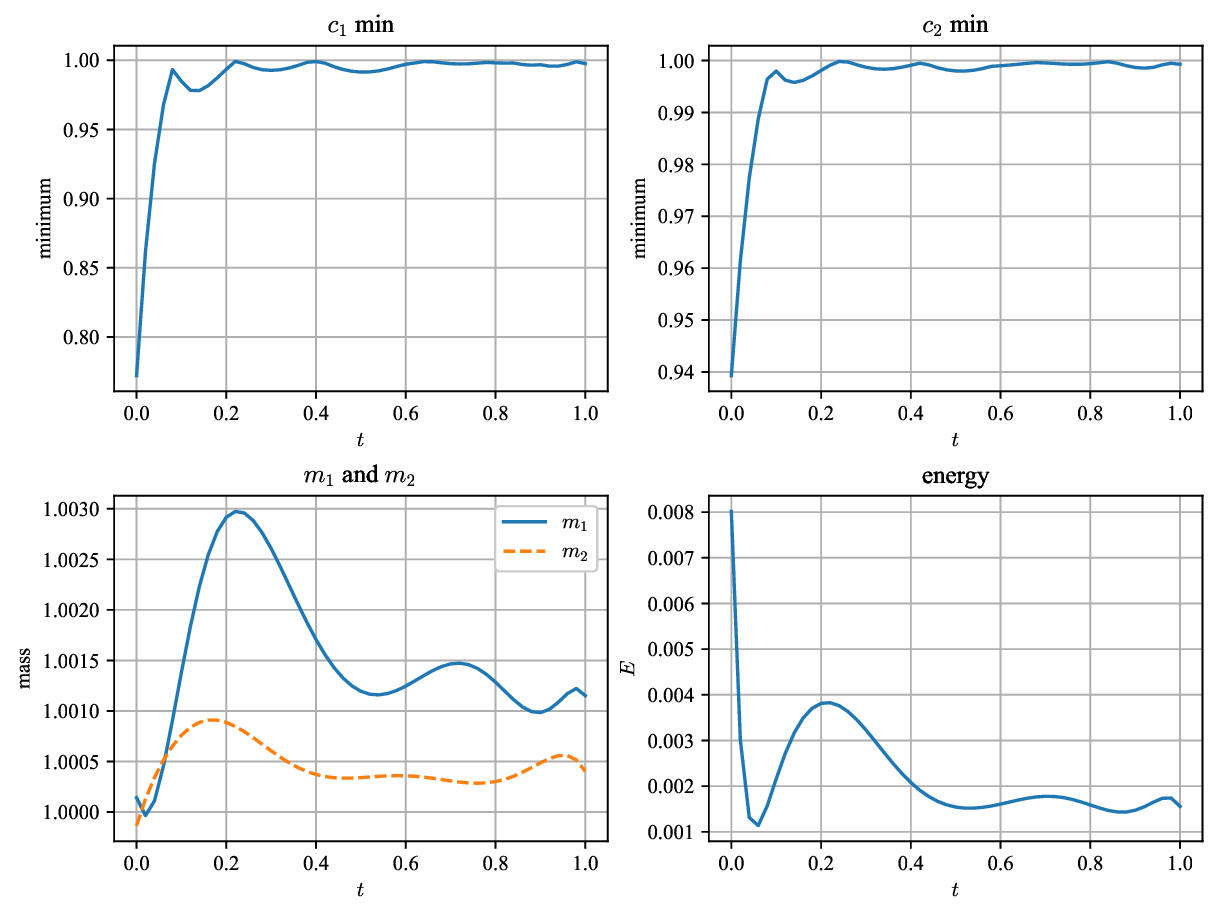}
	\caption{For $\epsilon_p^2=1$, the figure reports the computed free-energy functional, the minimum values, and the masses of \(c_1,c_2\) obtained by the SO-RaNN method, where $\ m=400,\ n_e=20,\ r_1=r_2=r_3=1,\ \lambda=100$ in Example \ref{pnp-benchmark1}.}
	\label{pnp-benchmark1-p1}
\end{figure}

\begin{figure}[!htbp] 		
	\centering
	\includegraphics[scale=0.6]{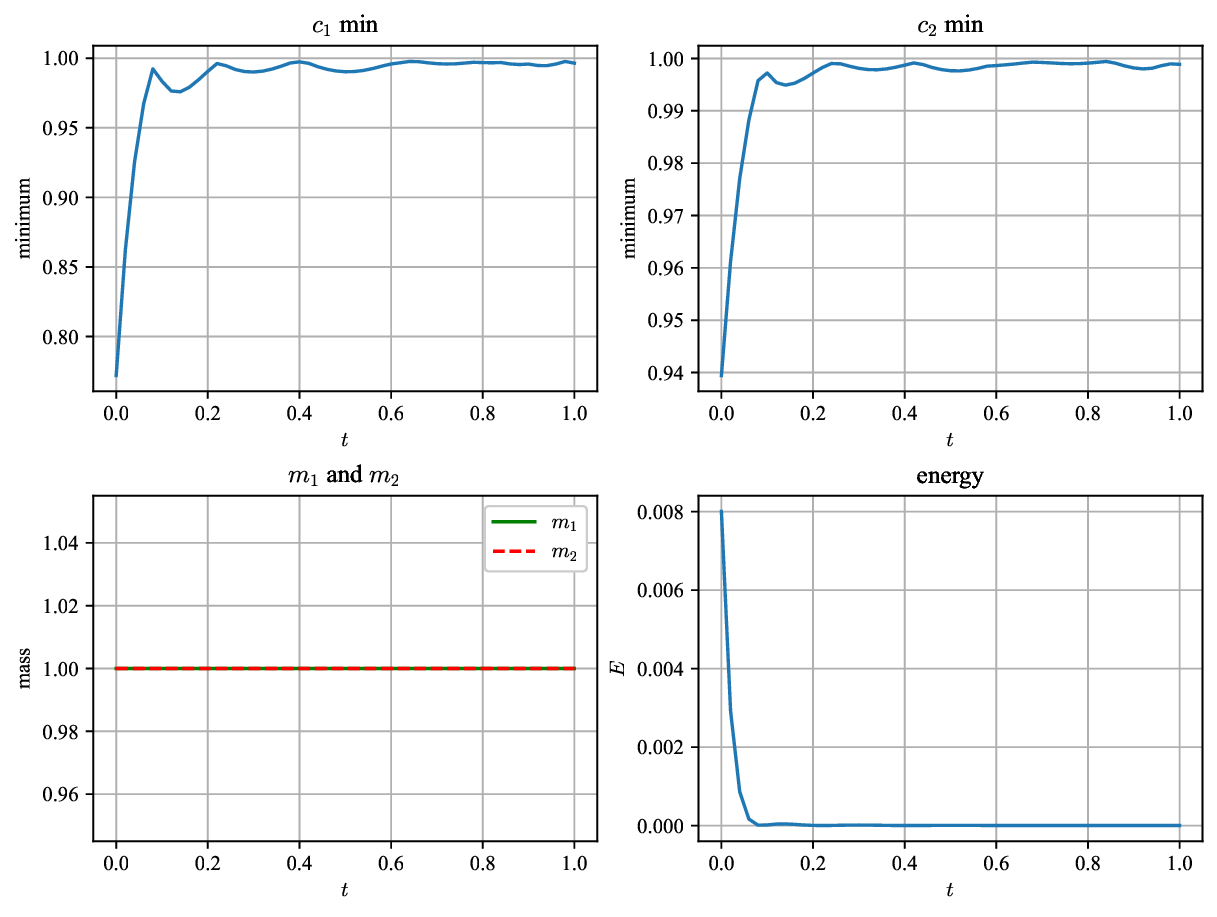}
	\caption{For $\epsilon_p^2=1$, the figure reports the computed free-energy functional, the minimum values, and the masses of \(c_1,c_2\) obtained by the SO-RaNN method, where $\ m=400,\ n_e=20,\ r_1=r_2=r_3=1,\ \lambda=100$ in Example \ref{pnp-benchmark1}.}
	\label{pnp-benchmark1-p2}
\end{figure}

\begin{figure}[!htbp] 		
	\centering
	\includegraphics[scale=0.6]{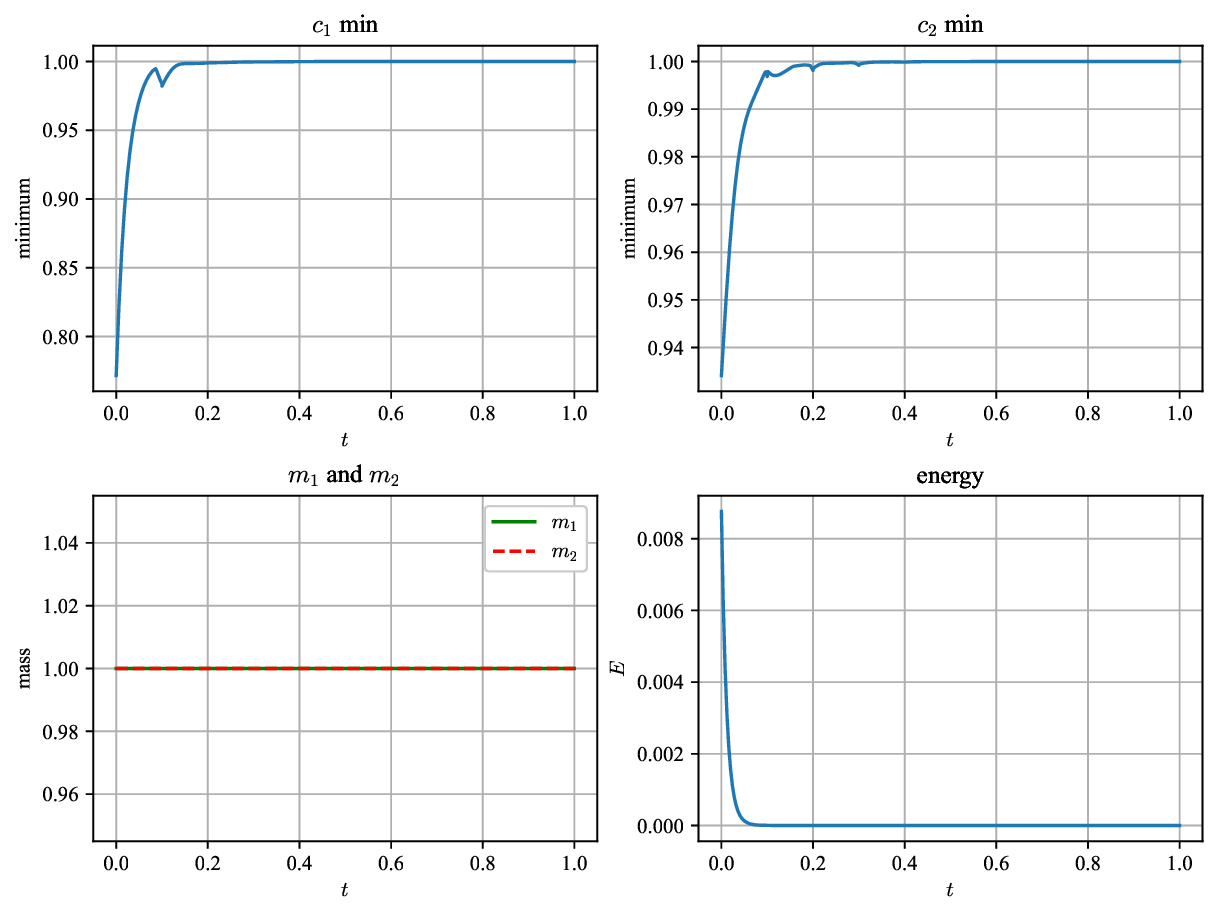}
	\caption{For $\epsilon_p^2=0.16$, the figure reports the computed free-energy functional, the minimum values, and the masses of \(c_1,c_2\) obtained by the SO-RaNN method (with time blocks), where $\ m=400,\ n_e=20,\ r_1=r_2=r_3=1,\ \lambda=100$ in Example \ref{pnp-benchmark1}.}
	\label{pnp-benchmark1-p3}
\end{figure}

\begin{figure}[!htbp] 		
	\centering
	\includegraphics[scale=0.6]{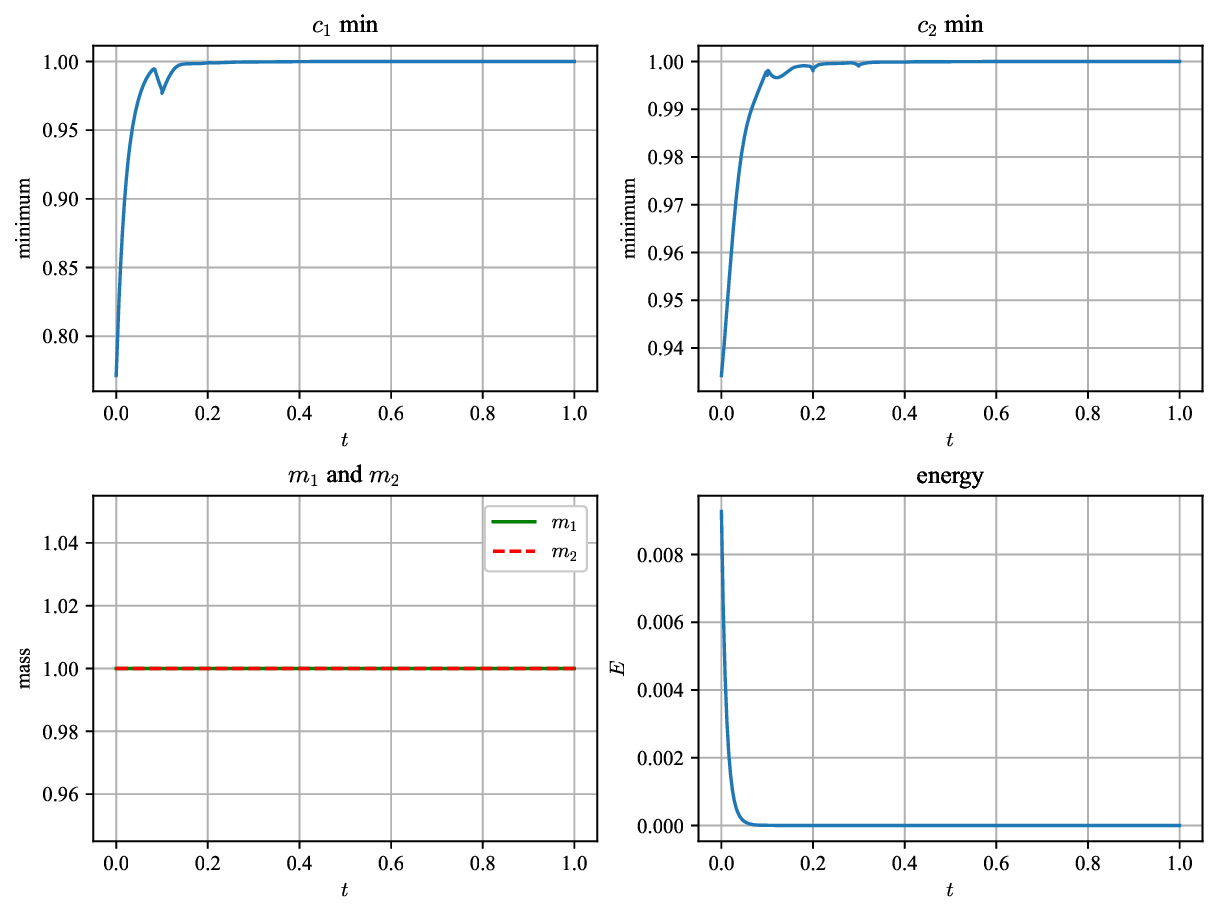}
	\caption{For $\epsilon_p^2=0.09$, the figure reports the computed free-energy functional, the minimum values, and the masses of \(c_1,c_2\) obtained by the SO-RaNN method (with time blocks), where $\ m=400,\ n_e=20,\ r_1=r_2=r_3=1,\ \lambda=100$ in Example \ref{pnp-benchmark1}.}
	\label{pnp-benchmark1-p4}
\end{figure}

\begin{figure}[!htbp] 		
	\centering
	\includegraphics[scale=0.6]{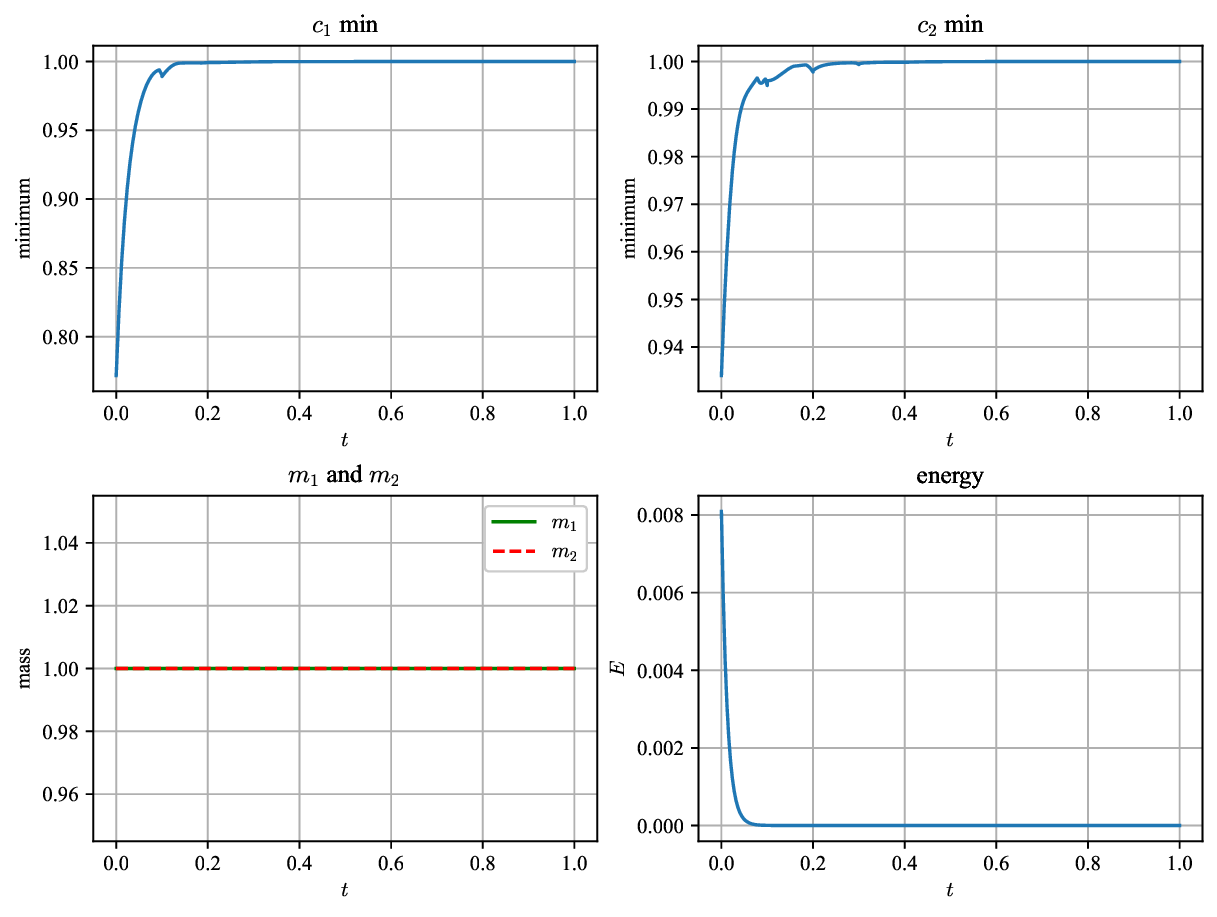}
	\caption{For $\epsilon_p^2=100$, the figure reports the computed free-energy functional, the minimum values, and the masses of \(c_1,c_2\) obtained by the SO-RaNN method (with time blocks), where $\ m=400,\ n_e=20,\ r_1=r_2=r_3=1,\ \lambda=100$ in Example \ref{pnp-benchmark1}.}
	\label{pnp-benchmark1-p5}
\end{figure}

\begin{example}[Two-component PNP Equations, benchmark test, discontinuous initial value]\label{pnp-benchmark2}
In this example, we consider the equations \eqref{PNP-model-2} in the domain $\Omega\times I=[-2,2]^2\times[0,1]$, $D_1=D_2=1$, $z_1=1,\ z_2=-1$, $\epsilon_p=1$, and use the same initial conditions as \cite{Tong2024,Bonilla2025}:
\[
c_1(x,y,0) = \left\{
\begin{array}{lll}
1 & \text{if} & \left(x-\frac{1}{2}\right)^2 + \left(y-\frac{1}{2}\right)^2 \leq \frac{1}{4}, \\[6pt]
\frac{1}{2} & \text{if} & \left(x+\frac{1}{2}\right)^2 + \left(y+\frac{1}{2}\right)^2 \leq \frac{1}{4}, \\[6pt]
0 & & \text{otherwise}.
\end{array}
\right.
\]
\[
c_2(x,y,0) = \left\{
\begin{array}{lll}
\frac{1}{2} & \text{if} & \left(x-\frac{1}{2}\right)^2 + \left(y-\frac{1}{2}\right)^2 \leq \frac{1}{4}, \\[6pt]
1 & \text{if} & \left(x+\frac{1}{2}\right)^2 + \left(y+\frac{1}{2}\right)^2 \leq \frac{1}{4}, \\[6pt]
0 & & \text{otherwise}.
\end{array}
\right.
\]
\end{example}

The initial condition is discontinuous, which is challenging for NN-based methods. We divide $I=[0,1]$ into 10 blocks uniformly, and set $m=800,\ n_e=20,\ r_1=r_2=r_3=2$. Figure \ref{pnp-benchmark2-p1} depicts the evolution of $c_1,\ c_2,\ \phi$ along with time ($t=0,\ 0.04,\ 0.2,\ 1$), the results in Figure \ref{pnp-benchmark2-p1} are consistent with the results in \cite{Bonilla2025} (subsection 6.1). Figure~\ref{pnp-benchmark2-p2} shows that the final numerical solution remains positive and that the prescribed masses are recovered at the selected correction instants. The computed free-energy functional, evaluated from the final corrected concentrations and the final gauge-fixed potential, is observed to be nonincreasing in this test. We also observe that the minima of $c_1$ and $c_2$ are close to zero, which further motivates enforcing positivity at the value level when evaluating logarithmic free-energy-related quantities.

\begin{figure}[!htbp] 		
	\centering
	\includegraphics[scale=0.8]{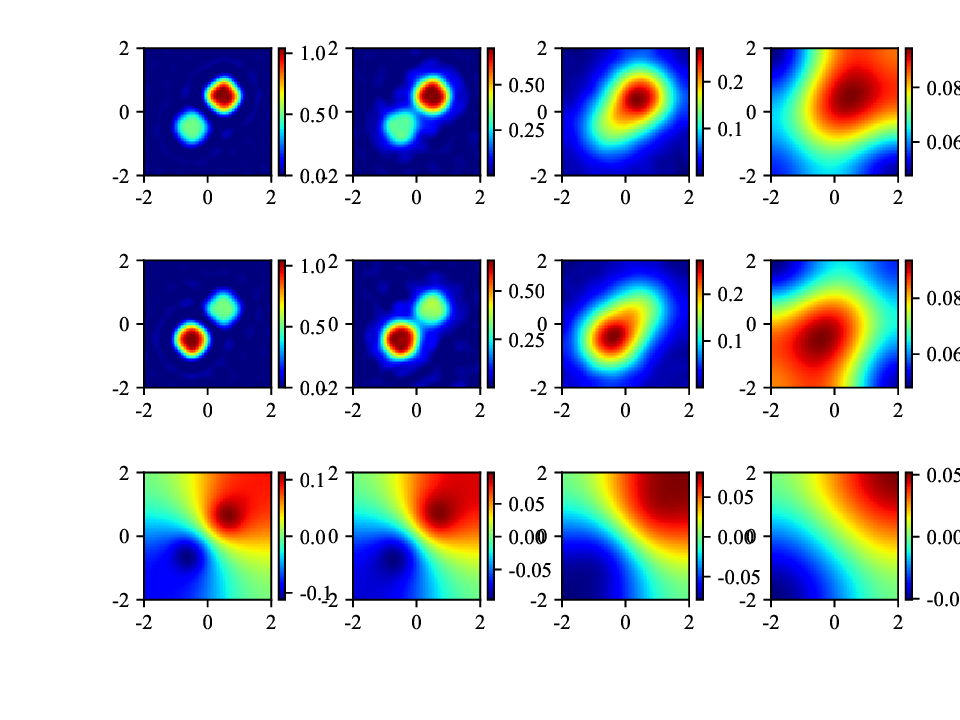}
	\caption{$c_1$ (the left row), $c_2$ (the middle row) and $\phi$ (the right row) at $t=0$ (the first column), $t=0.04$ (the second column), $t=0.2$ (the third column) and $t=1$ (the fourth column), where $\ m=800,\ n_e=20,\ r_1=r_2=r_3=2,\ \lambda=100$ in Example \ref{pnp-benchmark2}.}
	\label{pnp-benchmark2-p1}
\end{figure}

\begin{figure}[!htbp] 		
	\centering
	\includegraphics[scale=0.6]{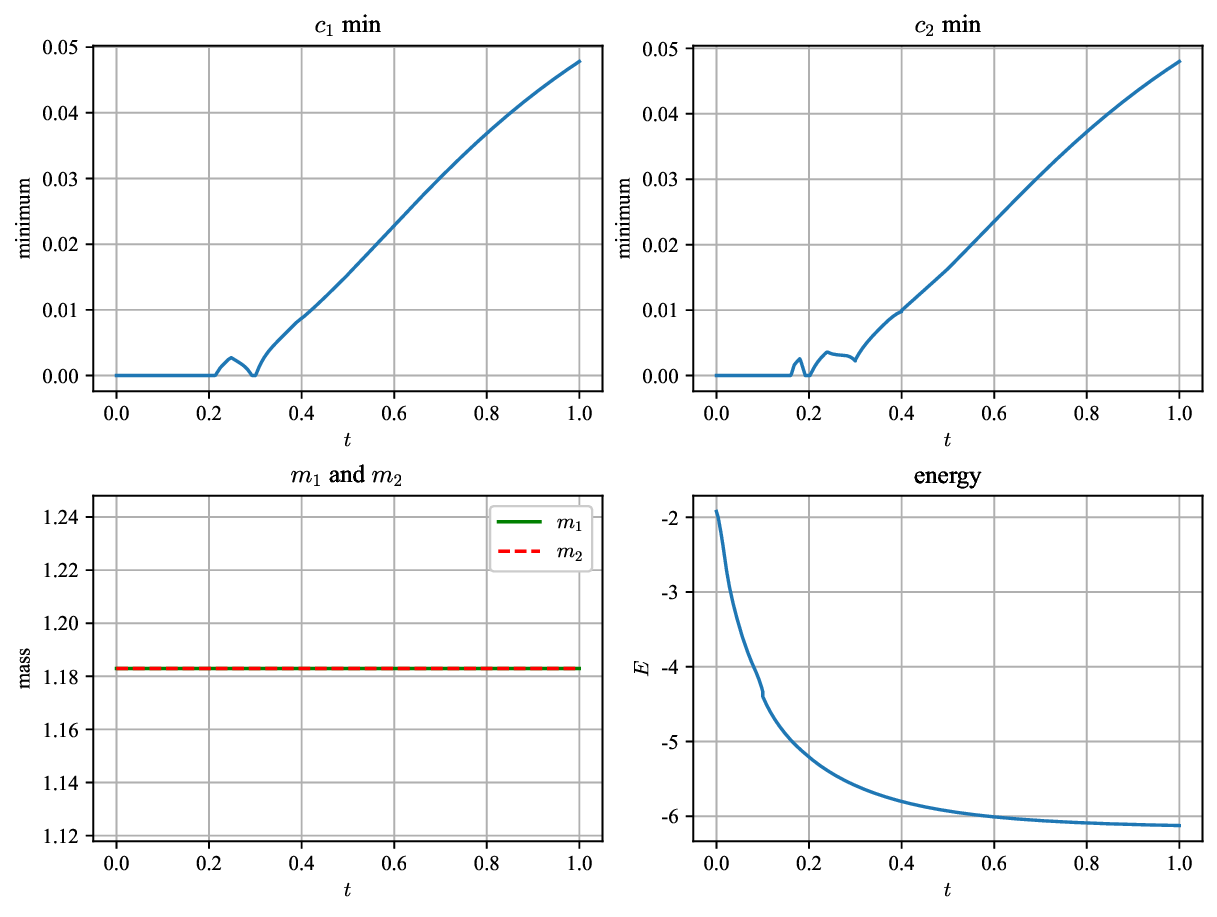}
	\caption{The figure reports the computed free-energy functional, the minimum values, and the masses of \(c_1,c_2\) obtained by the SO-RaNN method (with time blocks), where $\ m=800,\ n_e=20,\ r_1=r_2=r_3=2,\ \lambda=100$ in Example \ref{pnp-benchmark2}.}
	\label{pnp-benchmark2-p2}
\end{figure}

\begin{example}[Three-component PNP Equations, benchmark test]\label{pnp-benchmark3}
In this example, we consider the equations \eqref{PNP-model} with three ions in the domain $\Omega\times I=[0,1]^3$, $D_1=D_2=D_3=1$, $z_1=1,\ z_2=-1,\ z_3=2$, $\epsilon_p=1$, the initial conditions are given by
\begin{equation}
    \left\{
    \begin{array}{rrll}
        c_1(x,y,0) &=& 3x^2-2x^3+3y^2-2y^3,  \\
        c_2(x,y,0) &=& 3(3x^2-2x^3)(3y^2-2y^3)+\frac{9}{4}, \\
        c_3(x,y,0) &=& x^2(1-x)^2+y^2(1-y)^2+\frac{14}{15}.
    \end{array}
    \right.
\end{equation}
\end{example}

Figure \ref{pnp-benchmark3-p1} shows the computed profiles obtained with \(m=400\), \(n_e=20\), \(r_1=r_2=r_3=r_4=1\), \(\lambda=100\), and 10 uniform time blocks for \(I=[0,1]\). The free-energy curve reported in the figure is computed from the final numerical solution, namely the corrected concentrations together with the final gauge-fixed potential. Its behavior is consistent with the expected dissipative trend of the multi-ion PNP dynamics in this benchmark.

\begin{figure}[!htbp] 		
	\centering
	\includegraphics[scale=0.6]{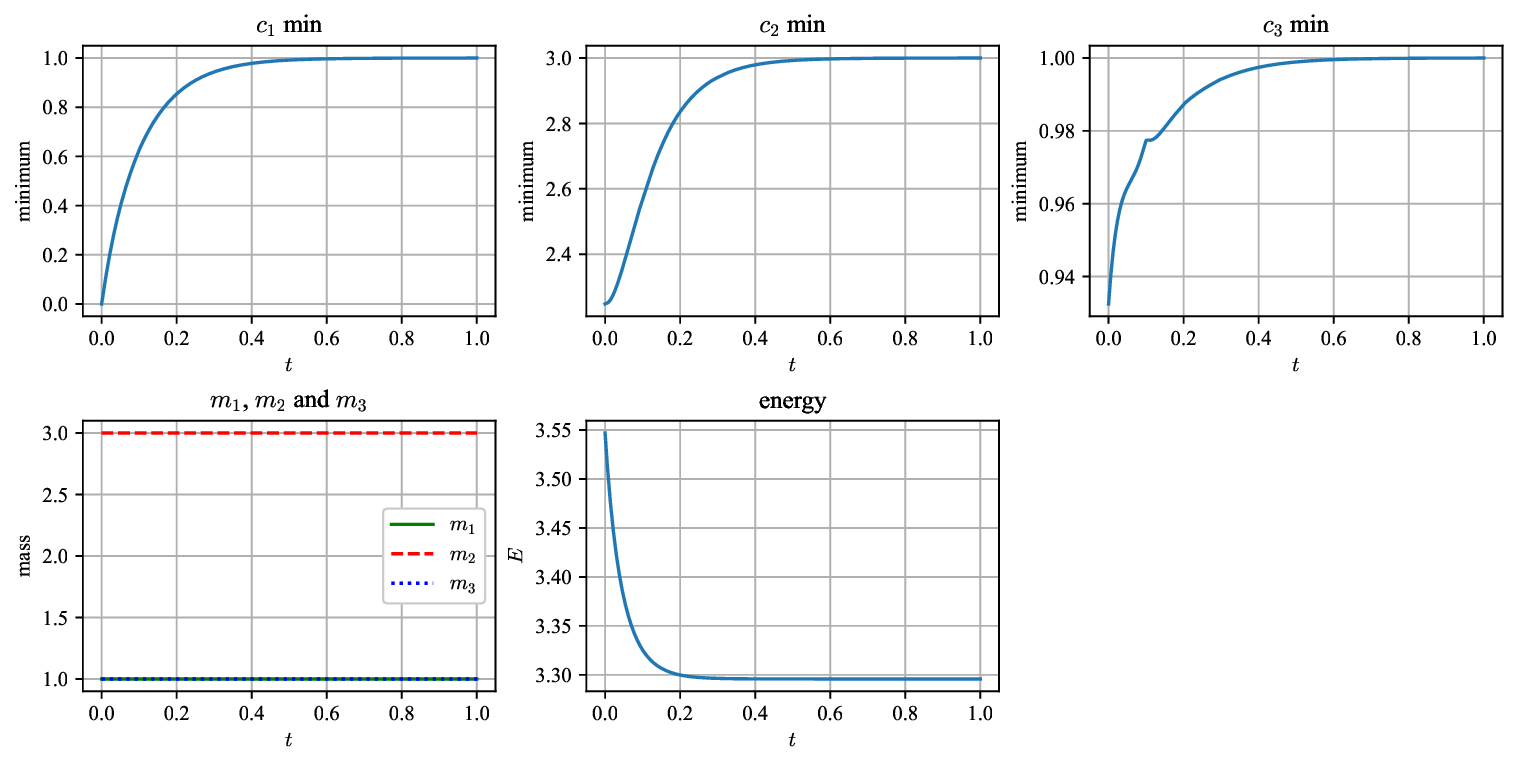}
	\caption{The figure reports the computed free-energy functional, the minimum values, and the masses of \(c_1,c_2,c_3\) obtained by the SO-RaNN method (with time blocks), where $\ m=400,\ n_e=20,\ r_1=r_2=r_3=r_4=1,\ \lambda=100$ in Example \ref{pnp-benchmark3}.}
	\label{pnp-benchmark3-p1}
\end{figure}

\subsection{PNP-NS system}
\begin{example}[Two-component PNP-NS Equations, accuracy test]\label{pnpns-accuracy}
In this example, we use the following augmented equations with manufactured analytic solutions as the test problem
\begin{subequations}\label{PNPNS-f}
    \begin{align}
        \frac{\partial c_i}{\partial t} + (\mathbf{u}\cdot \nabla)c_i &= D_i\nabla \cdot(\nabla c_i + z_i c_i \nabla \phi) + f_i,\ i=1,...,N,\ {\rm in}\ \Omega\times I, \\
        -\epsilon_p^2 \Delta \phi &= \sum\limits_{i=1}^{N}z_i c_i + g,\ {\rm in}\ \Omega\times I,\\
        \frac{\partial \mathbf{u}}{\partial t} + (\mathbf{u}\cdot \nabla)\mathbf{u} - \nu \Delta \mathbf{u} + \nabla p&= - \left(\sum\limits_{i=1}^{N}z_i c_i\right)\nabla \phi + \mathbf{f}_{\rm NS},\ {\rm in}\ \Omega\times I, \\
        \nabla \cdot \mathbf{u} &= 0,\ {\rm in}\ \Omega\times I,
    \end{align}
\end{subequations}
where $\Omega\times I=[-1,1]^2\times [0,0.1]$, $D_1=D_2=1$, $z_1=1,\ z_2=-1$, $\epsilon_p=1$, exact solutions are as follows
\begin{equation}
    \left\{
    \begin{array}{rrll}
        c_1 &=& {\rm cos}(\pi x){\rm cos}(\pi y){\rm sin}^2(t) + 1.1,  \\
        c_2 &=& -{\rm cos}(\pi x){\rm cos}(\pi y){\rm sin}^2(t) + 1.1, \\
        \phi &=& \frac{1}{\pi^2}{\rm cos}(\pi x){\rm cos}(\pi y){\rm sin}^2(t), \\
        \mathbf{u} &=& (\pi{\rm sin}^2(\pi x){\rm sin}(2\pi y){\rm sin}^2(t),-\pi{\rm sin}(2\pi x){\rm sin}^2(\pi y){\rm sin}^2(t)), \\
        p &=& {\rm sin}(\pi x){\rm sin}(\pi y){\rm sin}^2(t).
    \end{array}
    \right.
\end{equation}
The source terms $f_1,\ f_2,\ g,\ \mathbf{f}_{\rm NS}$, boundary and initial conditions can be calculated with the exact solution.
\end{example}

Since the manufactured source terms change the species masses, the correction target is the exact time-dependent mass of the manufactured solution at each selected correction instant.

We fix $n_e=20,\ r_1=r_2=r_3=r_4=r_5=4,\ \lambda=100$, and summarize the results for $\rm Re=10 (\nu=\frac{1}{\rm Re})$ and $\rm Re=1000$ with different $m$ in the Table \ref{pnpns-accuracy-table1} and Table \ref{pnpns-accuracy-table2}. The numerical errors decrease as the number of neurons increases, and the spatial divergence-free condition is achieved up to numerical precision. This manufactured example suggests that the proposed method can achieve accurate approximations for the tested PNP--NS configurations, while the SP-RaNN ansatz enforces the divergence-free constraint to numerical precision.

\begin{table}[H]
\centering
\begin{tabular}{cccccccccc}
\toprule
$m$&$e_{c_1}$&$e_{c_2}$&$e_{\phi}$&$e_{u_1}$&$e_{u_2}$&$e_{p}$&$e_{\nabla \cdot \mathbf{u}}$&$t$ (CPU)&ite\\
\midrule
200 &7.46E-04	&6.30E-04	&4.16E-04	&5.41E-03	&4.81E-03	&1.20E-01	&1.88E-15	&1.7572715	&4\\
400 &2.93E-05	&2.74E-05	&4.24E-05	&5.71E-04	&5.65E-04	&1.59E-02	&8.57E-15	&4.1277681	&4\\
800 &7.91E-07	&5.12E-07	&1.36E-06	&1.55E-05	&1.91E-05	&5.10E-04	&1.38E-14	&11.0776446	&4\\
1600 &2.31E-08	&2.83E-08	&6.27E-09	&1.53E-06	&1.39E-06	&7.30E-05	&4.18E-14	&35.8386845	&4\\
\bottomrule
\end{tabular}
\caption{$L^2$ errors at $t=0.1$ for $\rm Re=10$ with different $m$, where $\ n_e=20,\ r_1=r_2=r_3=r_4=r_5=4,\ \lambda=100$ in Example \ref{pnpns-accuracy}.}
\label{pnpns-accuracy-table1}
\end{table}

\begin{table}[H]
\centering
\begin{tabular}{cccccccccc}
\toprule
$m$&$e_{c_1}$&$e_{c_2}$&$e_{\phi}$&$e_{u_1}$&$e_{u_2}$&$e_{p}$&$e_{\nabla \cdot \mathbf{u}}$&$t$ (CPU)&ite\\
\midrule
200 &7.46E-04	&6.30E-04	&4.16E-04	&5.67E-03	&5.94E-03	&1.05E-01	&1.61E-15	&1.7559143&4\\
400 &2.93E-05	&2.74E-05	&4.24E-05	&7.65E-04	&7.89E-04	&1.42E-02	&6.36E-15	&4.1023153&4\\
800 &7.91E-07	&5.12E-07	&1.36E-06	&1.63E-05	&1.86E-05	&5.57E-04	&1.54E-14	&10.986527&4\\
1600 &2.31E-08	&2.83E-08	&6.30E-09	&4.89E-07	&4.32E-07	&2.21E-05	&3.63E-14	&35.7213455&4\\
\bottomrule
\end{tabular}
\caption{$L^2$ errors at $t=0.1$ for $\rm Re=1000$ with different $m$, where $\ n_e=20,\ r_1=r_2=r_3=r_4=r_5=4,\ \lambda=100$ in Example \ref{pnpns-accuracy}.}
\label{pnpns-accuracy-table2}
\end{table}

\begin{example}[Two-component PNP-NS Equations, benchmark test]\label{pnpns-benchmark1}
In this example, we consider the equations \eqref{PNPNS-model-2} with $\rm Re=100$ in the domain $\Omega\times I=[-1,1]^2\times [0,1]$, $D_1=D_2=1$, $z_1=1,\ z_2=-1$, $\epsilon_p=1$, the initial conditions are given by
\begin{equation}
    \left\{
    \begin{array}{rrll}
        c_1(x,y,0) &=& {\rm cos}(\pi x){\rm cos}(\pi y) + 1.1,  \\
        c_2(x,y,0) &=& -{\rm cos}(\pi x){\rm cos}(\pi y) + 1.1, \\
        \mathbf{u}(x,y,0) &=& (\pi{\rm sin}^2(\pi x){\rm sin}(2\pi y),-\pi{\rm sin}(2\pi x){\rm sin}^2(\pi y)).
    \end{array}
    \right.
\end{equation}
\end{example}

We divide $I=[0,1]$ into 10 blocks uniformly, and fix $m=400,\ n_e=20,\ r_1=r_2=r_3=r_4=r_5=4$, the results are plotted in Figure \ref{pnpns-benchmark1-p1} and Figure \ref{pnpns-benchmark1-p2}. Figure \ref{pnpns-benchmark1-p1} presents the snapshots at $t=0,\ 0.1,\ 0.6,\ 1$ of $c_1,\ c_2,\ u_1,\ u_2$, which is consistent with \cite{Zhou2023} (Figs. 5.6-5.9). Figure \ref{pnpns-benchmark1-p2} shows the evolution of the free-energy functional computed from the final numerical solution, including the corrected concentrations, the final gauge-fixed potential, and the velocity field. In this test, the free-energy curve is observed to be nonincreasing, while positivity, mass matching, and the divergence-free constraint are maintained to the reported numerical accuracy.

\begin{figure}[!htbp] 		
	\centering
	\includegraphics[scale=0.8]{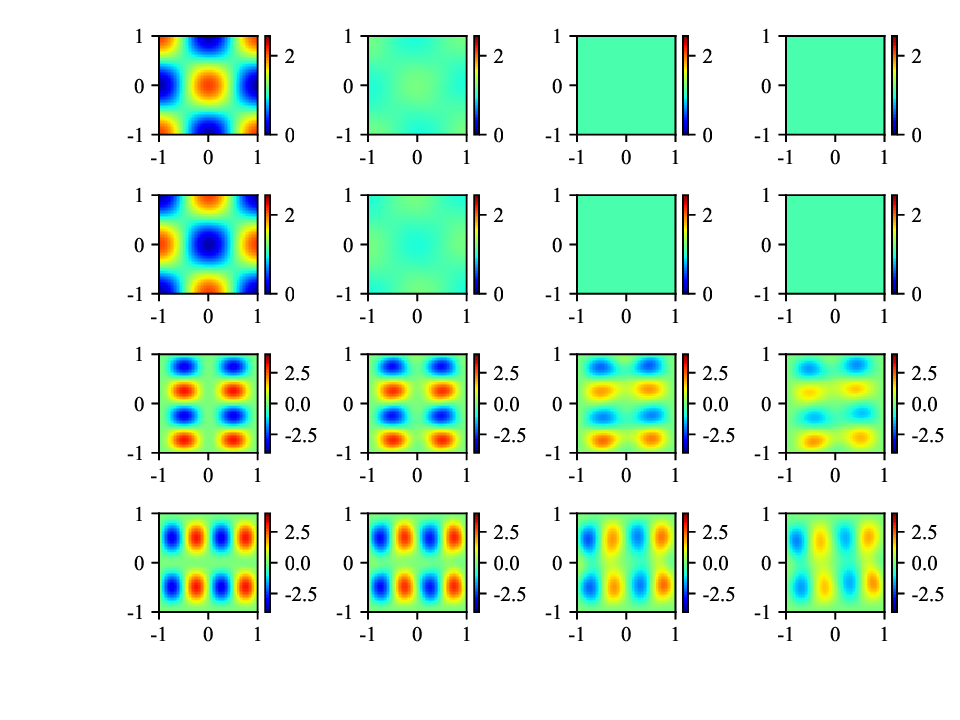}
	\caption{$c_1$ (the first row), $c_2$ (the second row), $u_1$ (the third row) and $u_2$ (the fourth row) at $t=0$ (the first column), $t=0.1$ (the second column), $t=0.6$ (the third column) and $t=1$ (the fourth column), where $\ m=400,\ n_e=20,\ r_1=r_2=r_3=r_4=r_5=4,\ \lambda=100$ in Example \ref{pnpns-benchmark1}.}
	\label{pnpns-benchmark1-p1}
\end{figure}

\begin{figure}[!htbp] 		
	\centering
	\includegraphics[scale=0.6]{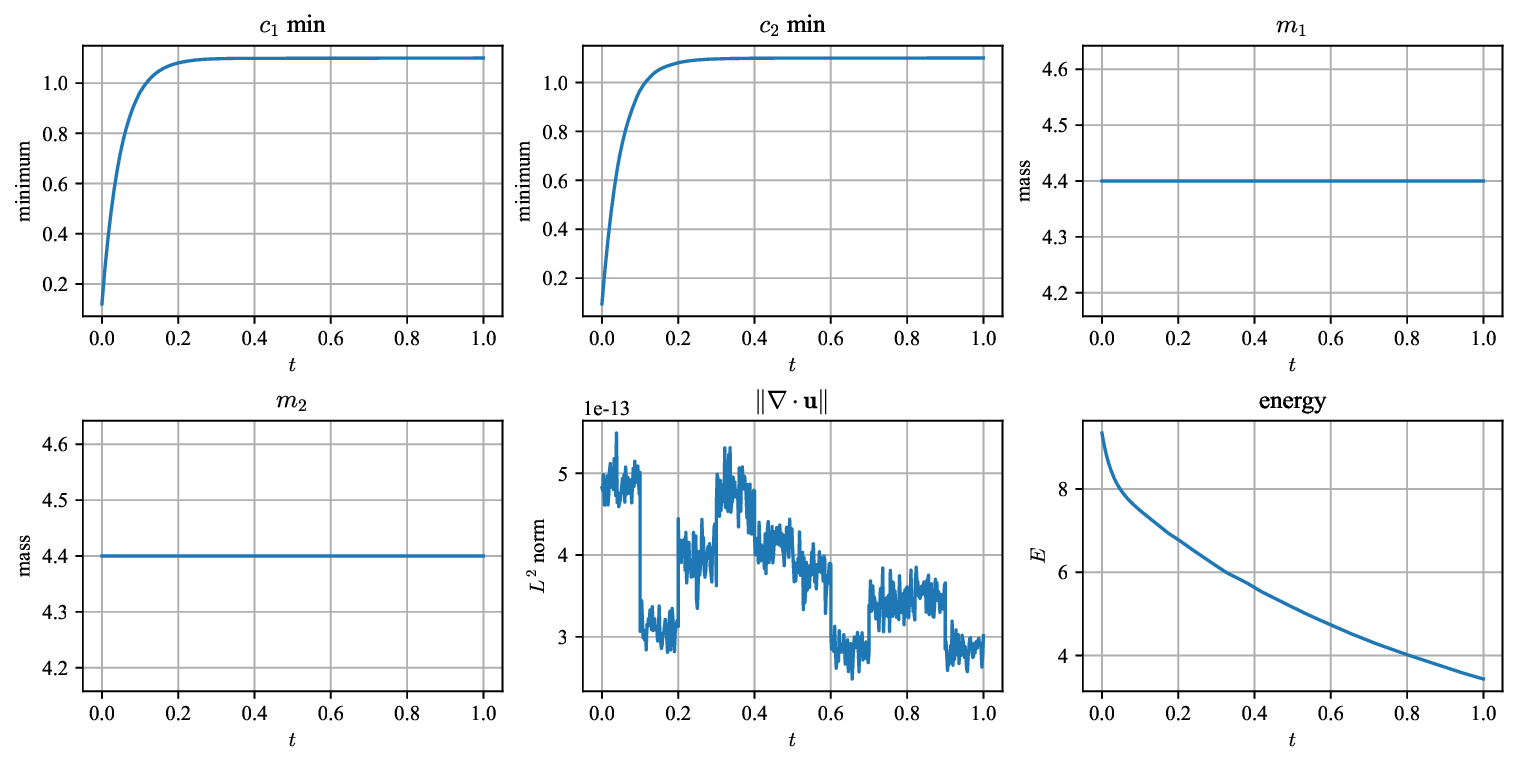}
	\caption{the figure reports the computed free-energy functional, the minimum values, and the masses of \(c_1,c_2\), and the $\Vert \nabla \cdot \mathbf{u}_{\rho} \Vert_0$ obtained by the SO-RaNN method (with time blocks), where $\ m=400,\ n_e=20,\ r_1=r_2=r_3=r_4=r_5=4,\ \lambda=100$ in Example \ref{pnpns-benchmark1}.}
	\label{pnpns-benchmark1-p2}
\end{figure}

\section{Conclusions}
\label{sec5}
In this paper, we proposed a structure-oriented randomized neural network method for the PNP and PNP-NS systems. The method combines raw RaNN solvers for decoupled linearized subproblems with positivity cut-off, selected-time mass correction, and an SAV-based auxiliary-variable correction. A final gauge-fixed Poisson least-squares update is used to compute the reported potential. For the PNP-NS system, an SP-RaNN representation is used for the velocity field, so that incompressibility is satisfied pointwise by construction.

On the theoretical side, we derived residual-based error estimates for the raw, uncorrected RaNN solvers of the linearized Nernst--Planck subproblems and for compatible Neumann Poisson subproblems, and formulated a conditional local-in-time convergence result for the raw outer Picard iteration of the PNP system together with its inexact-Picard interpretation for the implemented raw RaNN subsolvers. We also analyzed the correction steps separately: the positivity cut-off is stable at the value level, the mass correction gives exact mass matching at selected correction instants, and the ideal SAV update yields monotonicity of the SAV auxiliary variable. For the PNP-NS system, we used a divergence-free trial space for the velocity field, established an approximation result for the SP-RaNN ansatz, and gave a conditional residual-based error statement for the corresponding linearized Oseen-type subproblem.

Several directions are worth further investigation. First, a more complete analysis for the PNP-NS system, including the coupled electrohydrodynamic outer iteration, would be an important extension of the present work. Second, the proposed framework can be extended to other coupled PDE systems with positivity, conservation, dissipation, or incompressibility structures. Third, it would also be interesting to incorporate more physical structures directly into the neural ansatz, thereby reducing the reliance on post-processing corrections and enforcing more structures intrinsically at the approximation level.


\begin{thebibliography}{99}
    \bibitem{Arendt2016}
    W. Arendt and S. Monniaux, Maximal regularity for non-autonomous Robin boundary conditions, \emph{Mathematische Nachrichten} {\bf 289} (2016), 1325-1340. 

    \bibitem{Barron1993}
    A. R. Barron, Universal approximation bounds for superpositions of a sigmoidal function, \emph{IEEE Transactions on Information Theory} {\bf 39} (1993), 930-945.
    
    \bibitem{Bazant2004}
    M. Z. Bazant, K. Thornton, A. Ajdari, Diffuse-charge dynamics in electrochemical systems, \emph{Physical Review E} {\bf 70} (2004), 021506. 

    \bibitem{Biler1994}
    P. Biler, W. Hebisch, and T. Nadzieja, The Debye system: existence and large time behavior of solutions, \emph{Nonlinear Analysis: Theory, Methods \& Applications} {\bf 23} (1994), 1189–1209.

    \bibitem{Bonilla2025}
    J. Bonilla and J. V. Guti\'errez-Santacreu, Physics-based stabilized finite element approximations of the Poisson–Nernst–Planck equations, \emph{Computer Methods in Applied Mechanics and Engineering} {\bf 443} (2025), 118035.

    \bibitem{Chen2022}
    J. Chen, X. Chi, W. E and Z. Yang, Bridging Traditional and Machine Learning-based Algorithms for Solving PDEs: The Random Feature Method, \emph{Journal of Machine Learning} {\bf 1} (2022), 268-298.
    
    \bibitem{Chen1995}
    T. Chen and H. Chen, Universal approximation to nonlinear operators by neural networks with arbitrary activation functions and its application to dynamical systems, \emph{IEEE Transactions on Neural Networks} {\bf 6} (1995), 911–917.
          
    \bibitem{Choi2006}
    H.-W. Choi, and M. Paraschivoiu, Advanced hybrid-flux approach for output bounds of electro-osmotic flows: adaptive refinement and direct equilibrating strategies, \emph{Microfluidics and Nanofluidics} {\bf 2} (2006), 154–170.

    \bibitem{Dang2023}
    H. Dang and F. Wang, Local Randomized Neural Networks with Hybridized Discontinuous Petrov-Galerkin Methods for Stokes-Darcy Flows, \emph{Physics of Fluids} {\bf 36} (2024), 087138.
    
    \bibitem{Dang2024}
    H. Dang, F. Wang and S. Jiang, Adaptive Growing Randomized Neural Networks for Solving Partial Differential Equations, (2024), arXiv: 2408.17225v3.

    \bibitem{Dehghan2023}
    M. Dehghan, Z. Gharibi and R. Ruiz-Baier, Optimal Error Estimates of Coupled and Divergence-Free Virtual Element Methods for the Poisson–Nernst–Planck/Navier–Stokes Equations and Applications in Electrochemical Systems, \emph{Journal of Scientific Computing} {\bf 94} (2023), 72.

    \bibitem{Ding2020}
    J. Ding, Z. Wang and S. Zhou, Structure-preserving and efficient numerical methods for ion transport, \emph{Journal of Computational Physics} {\bf 418} (2020) 109597.

    \bibitem{Dong2024}
    L. Dong, D. He, Y. Qin and Z. Zhang, A positivity-preserving, linear, energy stable and convergent numerical scheme for the Poisson–Nernst–Planck (PNP) system, \emph{Journal of Computational and Applied Mathematics} {\bf 444} (2024), 115784.

    \bibitem{Dong2021}
    S. Dong and Z. Li, Local extreme learning machines and domain decomposition for solving linear and nonlinear partial differential equations, \emph{Computer Methods in Applied Mechanics and Engineering} {\bf 387} (2021), 114129.
    
    \bibitem{Dong2023}
    S. Dong and Y. Wang. A method for computing inverse parametric PDE problems with random-weight neural networks. \emph{Journal of Computational Physics} {\bf 489} (2023), 112263.
    
    \bibitem{E2017}
    W. E and B. Yu, The deep Ritz method: A deep learning-based numerical algorithm for solving variational problems, \emph{Communications in Mathematics and Statistics} {\bf 6} (2018), 1–12.

    \bibitem{Eisenberg1998}
    B. Eisenberg, Ionic channels in biological membranes-electrostatic analysis of a natural nanotube, \emph{Contemporary Physics} {\bf 39} (1998), 447–466.

    \bibitem{Fu2022}
    G. Fu and Z. Xu, High-order space–time finite element methods for the Poisson–Nernst–Planck equations: positivity and unconditional energy stability, \emph{Computer Methods in Applied Mechanics and Engineering} {\bf 395} (2022) 115031.

    \bibitem{Gajewski1986}
    H. Gajewski, K. G\"{o}ger, On the basic equations for carrier transport in semiconductors, \emph{Journal of Mathematical Analysis and Applications} {\bf 113} (1986), 12–35.
    
    \bibitem{He2019}
    D. He, K. Pan and X. Yue, A Positivity Preserving and Free Energy Dissipative Difference Scheme for the Poisson–Nernst–Planck System, \emph{Journal of Scientific Computing} {\bf 81} (2019), 436-458.

    \bibitem{Hu2020}
    J. Hu and X. Huang, A fully discrete positivity-preserving and energy-dissipative finite difference scheme for Poisson–Nernst–Planck equations, \emph{Numerische Mathematik} {\bf 145} (2020), 77-115.

    \bibitem{Huang2021}
    F. Huang and J. Shen, Bound/positivity preserving and energy stable scalar auxiliary variable schemes for dissipative systems: applications to Keller-Segel and Poisson-Nernst-Planck equations, \emph{SIAM Journal on Scientific Computing} {\bf 43} (2021), A1832--A1857.

    \bibitem{Huang2006}
    G. Huang, Q. Zhu and C. K. Siew, Extreme learning machine: theory and applications, \emph{Neurocomputing} {\bf 70} (2006), 489–501.
    
    \bibitem{Igelnik1995}
    B. Igelnik and Y.H. Pao, Stochastic choice of basis functions in adaptive function approximation and the functional-link net, \emph{IEEE Transactions on Neural Networks} {\bf 6} (1995), 1320–1329.
    
    \bibitem{Jerome1996}
    J. W. Jerome, Analysis of Charge Transport: A Mathematical Study of Semiconductor Devices, Springer, Berlin, 1996.

    \bibitem{Jin2021}
    X. Jin, S. Cai, H. Li and G. E. Karniadakis, NSFnets (Navier-Stokes flow nets): Physics-informed neural networks for the incompressible Navier-Stokes equations, \emph{Journal of Computational Physics} {\bf 426} (2021), 109951.

    \bibitem{Kajiwara2022}
    N. Kajiwara, Maximal $L_p-L_q$ regularity for the Stokes equations with various boundary conditions in the half space, arXiv: 2201.05306.

    \bibitem{Li2025}
    Y. Li and F. Wang, Local randomized neural networks with finite difference methods for interface problems, \emph{Journal of Computational Physics} {\bf 529} (2025), 113847.

    \bibitem{Li2026}
    Y. Li, F. Wang and L. Li, Structure-preserving Randomized Neural Networks for Incompressible Magnetohydrodynamics Equations, (2026), arXiv: 2603.01102.

    \bibitem{Lions1972}
    J. L. Lions and E. Magenes, Non-Homogeneous Boundary Value Problems and Applications, Vol. I, Springer-Verlag, Berlin--Heidelberg--New York, 1972.

    \bibitem{Liu2014}
    H. Liu and Z. Wang, A free energy satisfying finite difference method for Poisson–Nernst–Planck equations, \emph{Journal of Computational Physics} {\bf 268} (2014), 362–376.

    \bibitem{Lu2021}
    L. Lu, X. Meng, Z. Mao, and G. E. Karniadakis, DeepXDE: A Deep Learning Library for Solving Differential Equations, \emph{SIAM Review} {\bf 63} (2021), 208-228.

    \bibitem{Markowich1990}
    P. Markowich, C. Ringhofer, and C. Schmeiser, Semiconductor Equations, Springer, New York, 1990.

    \bibitem{Moseley2023}
    B. Moseley, A. Markham and T. Nissen-Meyer, Finite basis physics-informed neural networks (FBPINNs): a scalable domain decomposition approach for solving differential equations, \emph{Advances in Computational Mathematics} {\bf 49} (2023), 62.

    \bibitem{Nernst1889}
    W. Nernst, Die elektromotorische wirksamkeit der jonen, \emph{Zeitschrift Fur Physikalische Chemie} {\bf 4} (1889), 129--181.
    
    \bibitem{Pan2024}
    M. Pan, S. Liu, W. Zhu, F. Jiao and D. He, A linear, second-order accurate, positivity-preserving and unconditionally energy stable scheme for the Navier–Stokes–Poisson–Nernst–Planck system, \emph{Communications in Nonlinear Science and Numerical Simulation} {\bf 131} (2024), 107873.

    \bibitem{Pao1994}
    Y. Pao, G. Park and D. Sobajic, Learning and generalization characteristics of the random vector functional-link net, \emph{Neurocomputing} {\bf 6} (1994), 163–180.

    \bibitem{Planck1890}
    M. Planck, Ueber die erregung von electricität und Wärme in electrolyten, \emph{Annalen der Physik und Chemie} {\bf 275} (1890), 161-186.
    
    \bibitem{George2019}
    M. Raissi, P. Perdikaris and G. E. Karniadakis, Physics-informed neural networks: A deep learning framework for solving forward and inverse problems involving nonlinear partial differential equations, \emph{Journal of Computational Physics} {\bf 378} (2019), 686–707.

    \bibitem{Raymond2007}
    J.-P. Raymond, Stokes and Navier–Stokes equations with nonhomogeneous boundary conditions, \emph{Annales de l'Institut Henri Poincaré C, Analyse non linéaire} {\bf 24} (2007), 921-951.
    
    \bibitem{Schmuck2009}
    M. Schmuck, Analysis of the Navier-Stokes-Nernst-Planck-Poisson system, \emph{Mathematical Models and Methods in Applied Sciences} {\bf 19} (2009), 993-1014.

    \bibitem{Shang2023}
    Y. Shang, F. Wang and J. Sun, Randomized neural network with petrov–galerkin methods for solving linear and nonlinear partial differential equations, \emph{Communications in Nonlinear Science and Numerical Simulation} {\bf 127} (2023), 107518.
    
    \bibitem{Shang2025}
    Y. Shang, A. Heinlein, S. Mishra and Fei Wang, Overlapping Schwarz Preconditioners for Randomized Neural Networks with Domain Decomposition, \emph{Computer Methods in Applied Mechanics and Engineering} {\bf 442} (2025), 118011.
    
    \bibitem{Shen2018}
    J. Shen, J. Xu and J. Yang, The scalar auxiliary variable (SAV) approach for gradient flows, \emph{Journal of Computational Physics} {\bf 353} (2018), 407–416.

    \bibitem{Shen2021}
    J. Shen and J. Xu, Unconditionally positivity preserving and energy dissipative schemes for Poisson–Nernst–Planck equations, \emph{Numerische Mathematik} {\bf 148} (2021), 671-697.

    \bibitem{Sheng2021}
    H. Sheng and C. Yang, PFNN: A penalty-free neural network method for solving a class of second-order boundary-value problems on complex geometries, \emph{Journal of Computational Physics} {\bf 428} (2021), 110085.

    \bibitem{Shibata2007}
    Y. Shibata and R. Shimada, On the Stokes equation with Robin boundary condition, \emph{Advanced Studies in Pure Mathematics} {\bf 47} (2007), 341-348.

    \bibitem{Sirignano2018}
    J. Sirignano and K. Spiliopoulos, DGM: A deep learning algorithm for solving partial differential equations, \emph{Journal of Computational Physics} {\bf 375} (2018), 1339–1364.
    
    \bibitem{Sun2024}
    J. Sun, S. Dong and F. Wang, Local randomized neural networks with discontinuous galerkin methods for partial differential equations, \emph{Journal of Computational and Applied Mathematics} {\bf 445} (2024), 115830.
    
    \bibitem{Sun2025}
    J. Sun and F. Wang, Local Randomized Neural Networks with Discontinuous Galerkin Methods for KdV-type and Burgers Equations, \emph{Communications in Nonlinear Science and Numerical Simulation} {\bf 150} (2025), 108957.

    \bibitem{Tong2024}
    F. Tong and Y. Cai, Positivity preserving and mass conservative projection method for the Poisson-Nernst-Planck equation, \emph{SIAM Journal on Numerical Analysis} {\bf 62} (2024), 2004-2024.

    \bibitem{Wang2024}
    Y. Wang and S. Dong, An extreme learning machine-based method for computational PDEs in higher dimensions, \emph{Computer Methods in Applied Mechanics and Engineering} {\bf 418} (2024), 116578.

    \bibitem{Xu2025}
    Z. Xu and Z. Sheng, Subspace method based on neural networks for solving the partial differential equation, \emph{Computers \& Mathematics with Applications} {\bf 195} (2025), 109-138.

    \bibitem{Yang2026}
    Y. Yang and F. Wang, Adaptive-distribution randomized neural networks for PDEs: A low-dimensional distribution-learning framework, (2026), arXiv:2604.23999.
    
    \bibitem{Yu2025}
    Z. Yu, J. Shen, C. Wang and Q. Cheng, A Decoupled Structure Preserving Scheme for the Poisson-Nernst-Planck Navier-Stokes Equations and its Error Analysis, \emph{Journal of Scientific Computing} {\bf 104} (2025), 105.

    \bibitem{Zhang2024}
    Z. Zhang, F. Bao, L. Ju and G. Zhang, Transferable neural networks for partial differential equations, \emph{Journal of Scientific Computing} {\bf 99} (2024), 2.
    
    \bibitem{Zhou2023}
    X. Zhou and C. Xu, Efficient time-stepping schemes for the Navier-Stokes-Nernst-Planck-Poisson equations, \emph{Computer Physics Communications} {\bf 289} (2023), 108763.
\end{thebibliography}
\end{document}